\newtheorem{theorem}{Theorem}[section]
\newtheorem{observation}[theorem]{Observation}
\newtheorem{lemma}[theorem]{Lemma}
\newtheorem{proposition}[theorem]{Proposition}
\newtheorem{corollary}[theorem]{Corollary}
\newtheorem{fact}[theorem]{Fact}
\newtheorem*{theorem*}{Theorem}
\numberwithin{equation}{section}
\numberwithin{figure}{section}
\theoremstyle{definition}
\newtheorem{definition}[theorem]{Definition}
\newtheorem{notation}[theorem]{Notation}
\theoremstyle{remark}
\newtheorem{remark}[theorem]{Remark}
\newtheorem{example}[theorem]{Example}
\newcommand\R{\mathbb{R}}
\newcommand\C{\mathbb{C}}
\newcommand{\M}{\mathbb{M}}
\newcommand\N{\mathbb{N}}
\newcommand\E{\mathbb{E}}
\newcommand{\cA}{\mathcal{A}}
\newcommand{\cM}{\mathcal{M}}
\newcommand{\bX}{\mathbf{X}}
\newcommand{\bZ}{\mathbf{Z}}
\newcommand{\take}{\lefthalfcap}
\DeclareMathOperator{\id}{id}
\DeclareMathOperator{\Span}{Span}
\DeclareMathOperator{\re}{Re}
\DeclareMathOperator{\im}{Im}
\DeclareMathOperator{\Tr}{Tr}
\DeclareMathOperator{\tr}{tr}
\DeclareMathOperator{\sa}{sa}
\DeclareMathOperator{\ev}{ev}
\DeclareMathOperator{\Perm}{Perm}
\DeclareMathOperator{\supp}{supp}
\DeclareMathOperator{\Div}{div}
\DeclareMathOperator{\Cyc}{Cyc}
\DeclareMathOperator{\Trans}{Trans}
\DeclareMathOperator{\crss}{cr}
\DeclareMathOperator{\sgn}{sgn}
\DeclarePairedDelimiter{\ip}{\langle}{\rangle}
\title{Combinatorial aspects of Parraud's asymptotic expansion for GUE matrices}
\author{David Jekel}
\address{Department of Mathematical Sciences, University of Copenhagen, \newline Universitetsparken 5, 2100 Copenhagen {\O}, Denmark}
\email{daj@math.ku.dk}
\begin{document}
	
	\maketitle
	
	\begin{abstract}
		We give a new combinatorial proof of Parraud's formula for the asymptotic expansion in powers of $1/N^2$ for the expected trace of polynomials of several independent $N \times N$ GUE matrices, which expresses the result using a mixture of free difference quotients, introducing new freely independent semicircular variables, and integration with respect to parameters.  Our approach streamlines the statement of the formula while clarifying its relationship to the combinatorial genus expansion.  
	\end{abstract}
	
	\section{Introduction}
	
	\subsection{Motivation}
	
	If $p$ is a non-commutative polynomial in $m$ variables and $X_1^{(N)}$, \dots, $X_m^{(N)}$ are independent matrices drawn from the Gaussian unitary ensemble (GUE), then there is an asymptotic expansion of $\E [\tr_N[p(X_1^{(N)},\dots,X_m^{(N)})]]$ in powers of $1/N^2$ (see e.g.\ \cite[\S 1.7]{MingoSpeicher}).  When $p$ is a monomial, the coefficients are given by the enumeration of maps of a certain genus.  In physics, the connection between matrix moments and planar maps was studied by 't Hooft in 1974 \cite{tHooft1974A,tHooft1974B}, and then generalized to arbitrary genus by Br{\'e}zin, Itzykson, Parisi, and Zuber in 1978 \cite{BIPZ1978}.  In 1986, Harer and Zagier used this machinery to study the Euler characteristics of moduli spaces of curves \cite{HZ1986}.  The applications to random matrix theory were further developed in \cite{EM2003partition,GMS2006,Shcherbina2014}.  See \cite{Zvonkin1997} for survey of map enumeration and random matrices.
	
	However, due its purely combinatorial nature, the topological expansion is difficult to apply to non-commutative smooth functions beyond the setting of polynomials and power series.  Thus it is more useful for analytic applications to have an expression for the asymptotic expansion in terms of non-commutative derivatives of the input function, which allows extension of the formula to smooth functions as in the work of Schultz \cite{Schultz2005}.  For instance, to prove sharp results about the convergence of operator norms (equivalently, the convergence of their spectra in Hausdorff distance), one would like an asymptotic expansion for $\E [\tr_N[f(p(X_1^{(N)}, \dots, X_m^{(N)}))]]$ where $f$ is a smooth function and $p$ is a non-commutative polynomial.  Parraud gave an alternative approach to asymptotic expansions for $\E[\tr_N[f(X_1^{(N)},\dots,X_m^{(N)})]]$ for certain functions $f$ where the terms and error bounds in the expansion are expressed using non-commutative derivatives of $f$ \cite[Theorem 1.1, Theorem 3.4]{Parraud2023}.
	
	This allowed for analytic control over the terms and the error bounds using tools such operator derivatives and Fourier decomposition of smooth functions (see \cite{Nikitopoulos2023}), and hence to obtain new proofs and generalizations of Haagerup and Thorbj{\o}rnsen's result on convergence of operator norms of functions of GUE matrices \cite{HaagThorbNormBound}.  For instance, Collins, Guionnet, and Parraud used only the first-order case of the asymptotic expansions to get sharper estimates for operator norms of matrix polynomials in GUE matrices in \cite{CGP2022}.  Moreover, Belinschi and Capitaine \cite{BelCap2022} used Parraud's asymptotic expansion formula to study the operator norms of functions of tensor products of GUE matrices, and their main theorem implies, by the work of Hayes \cite{HayesPT}, the Peterson--Thom conjecture about the free group von Neumann algebra from \cite[p.~590]{PetersonThom}.  Parraud recently gave another proof of strong convergence for tensors using the asymptotic expansions of smooth functions \cite{Parraud2024tensor}.  Moreover, a new approach to strong convergence for tensors of Chen, Garza-Vargas, and Van Handel \cite{CGVVH2024} uses asymptotic expansions together with a classical complex-analytic estimate that had hitherto not been used in this context.  Analogous results were also proved for Haar unitaries:  Parraud gave an asymptotic expansion in \cite{Parraud2023unitary}.  Bordenave and Collins studied the strong convergence of tensor products for Haar unitaries in \cite{BorCol2023}, but their proof does not use Parraud's formula.
	
	Given the significant applications of asymptotic expansions for GUE matrices, this paper aims for a better understanding of Parraud's asymptotic expansion formula and how it relates to the genus expansion.  Of course, the terms in Parraud's asymptotic expansion must agree with the terms in the genus expansion by uniqueness; see \cite[Remark 3.8]{Parraud2023} and \cite[Corollary 1.3]{PS2023energy}.  However, since Parraud's proof is analytic, it does not provide any combinatorial intuition for why these two things should be the same.  We will give a new combinatorial proof of Parraud's formula that shows how the derivative operators in the formula relate to crossings in the construction of maps from pair partitions, and in turn to the genus.
	
	At the same time, we aim to streamline the statement of the formula.  Though Parraud's asymptotic expansion gives fine analytic control, it is quite challenging to keep track of the many terms.  Indeed, for each higher order term, one has to increase the number of variables and plug in additional freely independent semicircular variables, indexed by complicated sets defined inductively \cite[\S 2.3]{Parraud2023}.  Then one must plug in a linear combination of the semicircular variables depending on some parameters and integrate over the parameters.  Here we will lay out methodically the differential and algebraic operations used in the formula, and define crossing derivative operators $D^{\crss}$ and $T^{\crss}$, which allow for a more compact statement of the expansion up to arbitrary order.
	
	Here we focus only on the case of polynomials in order to keep the paper self-contained and manageable in length.  Of course, the smooth setting is important for applications, and we plan to address this setting in later joint projects.  Briefly speaking, we believe the right setting for Parraud's formula should be some space of non-commutative $C^k$ functions by taking the completion of the space of non-commutative polynomials with respect to certain non-commutative $C^k$ norms, which would unify the \emph{ad hoc} combination of polynomials, complex exponentials, and smooth univariate functions in Parraud's work.  Then Theorem \ref{thm:Parraud1} would extend from polynomials to these more general functions by a density argument, after showing that the derivatives and algebraic operations used in the formula extend to non-commutative smooth functions.  These spaces of non-commutative functions would be handled in a similar way to \cite{DGS2021} and \cite[\S 3]{JLS2022}, although the exact function spaces in those papers are not well-suited to Parraud's formula.
	
	Another line of investigation that we leave open for future research is to extend the combinatorial proof of Parraud's formula to other ensembles, such as GOE, GSE, Haar unitary, Haar orthogonal.  In the GOE case, for instance, the combinatorial formulas are similar but the potential for twists in the construction of maps results in non-orientable surfaces contributing to the genus expansion and would require adding another type of derivative into our formula.  In the Haar unitary and orthogonal matrix cases would be studied using the Weingarten calculus.

	
	The paper is organized as follows:
	\begin{itemize}
		\item \S \ref{subsec: statement} describes the statement of the first-order Parraud's formula in the case of non-commutative polynomials, and \S \ref{subsec: higher order} describes the higher-order formula.
		\item In \S \ref{sec: preliminaries}, we give background on GUE matrices, free independence, and semicircular variables, and in particular integration by parts needed for the proofs.
		\item In \S \ref{sec: genus expansion}, we give a generalized genus expansion which handles a mixture of GUE matrices and freely independent semicircular variables.
		\item In \S \ref{sec: Parraud formula proof}, we give the combinatorial proof of Theorem \ref{thm:Parraud1}.
	\end{itemize}

	\subsection{Statement of Parraud's formula} \label{subsec: statement}
	
	In order to describe certain algebraic operations, it will be convenient to use vector spaces rather than sets of indices to keep track of the variables.  Although the added abstraction of the vector space notation may be off-putting, it will enable us to compactly write the index sets for the iterated Parraud's formula using direct sums and tensor product operations, so we prefer this over the involved construction of index sets as subsets of the natural numbers in \cite[Definition 2.17]{Parraud2023}.  The vector space formalism is partly motivated by Voiculescu's free Gaussian functor (see \cite[\S 1.5]{ShlyakhtenkoParkCity}).  Recall that if $V$ is a real inner-product space, then Voiculescu's free Gaussian constructions give a tracial von Neumann algebra $\cM_V$ and self-adjoint operators $X_V = (X_v)_{v \in V}$ in $\cM_V$, such that
	\begin{itemize}
		\item The mapping $v \mapsto X_v$ is $\R$-linear.
		\item For any orthonormal family $e_1$, \dots, $e_k \in V$, the operators $X_{e_1}$, \dots, $X_{e_k}$ are freely independent standard semicircular operators.
	\end{itemize}
	There is also a random matrix analog of this construction.  Namely, for each real inner-product space $V$ and $N \in \N$, there exist random $N \times N$ self-adjoint matrices $X_V^{(N)} = (X_v^{(N)})_{v \in V}$ such that
	\begin{itemize}
		\item The mapping $v \mapsto X_v^{(N)}$ is $\R$-linear.
		\item For any orthonormal family $e_1$, \dots, $e_k \in V$, the operators $X_{e_1}^{(N)}$, \dots, $X_{e_k}^{(N)}$ are independent standard GUE matrices.
	\end{itemize}
	To show existence, one can complete $V$ to a real Hilbert space, choose $X_v^{(N)}$ for $v$ ranging over an orthonormal basis for $V$, and then extend by linearity.
	
	As our random variables are indexed by a vector space, we also consider algebras of $*$-polynomials in variables indexed by a vector space.  Thus, for a complex $*$-algebra $\cA$ and a real vector space $V$, define $\cA\ip{V}$ as the free unital $*$-algebra generated by a copy of $\cA$ and formal self-adjoint variables $(x_v)_{v \in V}$ modulo the relations $x_{v_1+v_2} = x_{v_1} + x_{v_2}$ and $x_{\lambda v} = \lambda x_v$ for $\lambda \in \R$.  Equivalently, $\cA\ip{V}$ is the unital $*$-algebraic free product of $\cA$ and the tensor algebra of $V$.  Below we will take $\cA = \M_N$, the algebra of $N \times N$ matrices over $\C$.
	
	Fix real inner-product spaces $V$ and $W$.  Let $\cM_W$ be the free Gaussian functor of $W$, and let $\M_N * \cM_W$ be the free product as tracial von Neumann algebras.  View $\M_N$ and $\cM_W$ as unital subalgebras of $\M_N* \cM_W$ and hence view the Gaussian matrices $X_v^{(N)}$ as random elements of $\M_N * \cM_W$, and similarly the semicirculars $X_w$ as elements of $\M_N * \cM_W$.  Note that there is a unique $*$-homomorphism $\ev_{N,V,W}: \M_N\ip{V \oplus W} \to \M_N * \cM_W$ (which we will call the \emph{evaluation map}) that sends the formal variable $x_{v \oplus w}$ to $X_v^{(N)} + X_w$ (note the output is random).  We will write $f(X_V^{(N)},X_W) = \ev_{N,V,W}(f)$ for $f \in \M_N\ip{V \oplus W}$.
	
	Parraud's formula gives a way to describe the difference between the expected trace of $f(X_V^{(N)},X_W)$ and $\tr_{\cM_{V \oplus W}}(f(X_V,X_W))$, where $X_V$ is a free semicircular family and we view $X_V$ and $X_W$ as elements of $\cM_{V \oplus W} \subseteq \M_N * \cM_{V \oplus W}$.  The first iteration of the formula can be stated as follows.
	
	\begin{theorem} \label{thm:Parraud1}
		With the notation above, we have
		\begin{multline*}
			\E \circ \tr_{\M_N * \cM_V}[f(X_V^{(N)},X_W)] = \tr_{\cM_{V \oplus W}}[f(X_V,X_W)] \\
			+ \frac{1}{N^2} \E \circ \tr_{\M_N * \cM_{(V \oplus W) \otimes \R^6}}[T_{V,W}^{\crss} f(X_V^{(N)},X_{(V \oplus W) \otimes \R^6})],
		\end{multline*}
		where $T_{V,W}^{\crss}$ is the linear operator $\M_N\ip{V \oplus W} \to \M_N\ip{V \oplus [(V \oplus W) \otimes \R^6]}$ described by Definition \ref{def: T cross} below.  Here the tensor products of the indexing vector spaces are taken over $\R$.
	\end{theorem}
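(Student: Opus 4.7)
The plan is to prove the identity by a side-by-side comparison of generalized genus expansions for the two traces appearing in the statement. By multilinearity and continuity of both sides in $f \in \M_N\ip{V \oplus W}$, and by linearity of the operator $T^{\crss,(N)}_{V,W}$, it suffices to treat the case where $f$ is a fixed monomial $a_0 x_{\xi_1} a_1 x_{\xi_2} \cdots x_{\xi_n} a_n$ in formal variables $x_\xi$ with $\xi$ ranging over an orthonormal system in $V \oplus W$ and with matrix coefficients $a_i \in \M_N$. Both sides will then be rewritten as sums over decorated pair partitions of the $n$ positions, and I will aim for a term-by-term matching.

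For the left-hand side I would apply the generalized genus expansion developed in \S \ref{sec: genus expansion}: the quantity $\E \tr[f(X_V^{(N)},X_W)]$ becomes a sum over pair partitions of the positions $\{1,\dots,n\}$, in which a chord is labelled "$V$-type" when it contracts a pair of GUE entries via Wick's formula and "$W$-type" when it contracts a pair of free semicircular variables. Each pair partition contributes a factor $N^{-2g}$, where $g$ is the genus of the associated ribbon graph, with the extra constraint that the restriction to $W$-chords must be non-crossing among themselves (since the $W$-variables are freely semicircular and not matricial). Grouping by the power $N^{-2g}$ gives a series $\sum_{g \geq 0} N^{-2g} c_g(f)$. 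The leading term $g = 0$ sums over fully non-crossing pair partitions of the combined $V$- and $W$-positions, and since the joint planar moments of a GUE family and a free semicircular family coincide, this planar sum equals $\tr_{\cM_{V \oplus W}}[f(X_V,X_W)]$.

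It then remains to identify $\sum_{g \geq 1} N^{-2g} c_g(f)$ with $N^{-2}\, \E \tr[T^{\crss,(N)}_{V,W} f(X_V^{(N)}, X_{(V \oplus W) \otimes \R^6})]$. The key combinatorial observation I would exploit is that every non-planar pair partition of $\{1,\dots,n\}$ can be canonically presented by distinguishing a single ordered crossing pair of $V$-chords $\{a,c\}, \{b,d\}$ with $a < b < c < d$, together with residual pair partitions on the six word segments into which these four marked positions cut $f$. The operator $T^{\crss,(N)}_{V,W}$ of Definition \ref{def: T cross} should be designed precisely so that, via two nested free difference quotients together with bookkeeping indices in $(V \oplus W) \otimes \R^6$, it enumerates exactly this "distinguish-one-crossing" structure: the six $\R$-directions label which of the six segments each freshly inserted semicircular variable lives in, and the pairwise orthogonality of these six factors forces Wick contractions among the new variables to respect the segment structure. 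Expanding the right-hand side once more by the genus expansion, the two distinguished chords contribute the explicit factor $1/N^2$, and the residual pairings on the six segments reproduce $c_g(f)$ for all $g \geq 1$.

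The main obstacle I expect is to verify carefully that this "distinguish-one-crossing" assignment really is a weighted bijection after all the algebraic manipulations: the two nested free difference quotients in $T^{\crss}$ must be arranged so that the two distinguished chords genuinely cross (rather than nest) and so that each non-planar diagram is produced with multiplicity exactly one, while the $\R^6$ decoration must be shown to preclude any "cross-talk" contraction between a newly introduced semicircular and an unrelated original $V$-variable. Once that bookkeeping is in place, matching the coefficients of each power of $1/N^2$ on the two sides yields the theorem.
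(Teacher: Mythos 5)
Your high-level picture is on the right track — the paper does reduce to monomials, uses the mixed genus expansion of \S\ref{sec: genus expansion}, and ultimately matches terms indexed by pair permutations in which one ``active'' transposition creating a crossing is singled out — but there is a genuine gap at the heart of your proposal, and it is not the mere bookkeeping you flag as ``the main obstacle.''

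The gap is your claim that ``every non-planar pair partition \dots can be canonically presented by distinguishing a single ordered crossing pair.'' There is no such canonical choice: a non-planar pairing typically contains many crossings, and a naive ``choose one crossing'' prescription overcounts each pairing by the number of crossings it has. The paper's way around this is precisely the thing your proposal omits: the $\alpha_{s,t}$ substitution and the integrals over $s,t\in[0,1]$ built into Definition \ref{def: T cross}. Concretely, the paper does not compare the two sides by a direct genus-by-genus bijection. It instead interpolates via $\eta_f(t)=\E\tr[f(t^{1/2}X_V^{(N)}+(1-t)^{1/2}X_V,X_W)]$, so that the identity becomes $\eta_f(1)-\eta_f(0)=\int_0^1\eta_f'(t)\,dt$. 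Differentiating the genus-expansion formula for $\eta_f(t)$ (Lemma \ref{lem:etaderivative}) singles out a transposition $\tau$ via the product rule, and then a cancellation argument eliminates exactly those terms where adjoining $\tau$ to the noncrossing part does \emph{not} create a crossing. On the other side, the $s$-integral produces the factor $\int_0^1 \crss(\pi,\tau)\, s^{\crss(\pi,\tau)-1}\,ds=\mathbf{1}_{\text{crossing exists}}$ (Lemma \ref{lem:integralcomputation2}), which converts ``number of crossings'' into an indicator and exactly cures the overcounting. Without the $s$- and $t$-integrals your ``distinguish-one-crossing'' enumeration does not give a weighted bijection, and your proposal offers no alternative mechanism to fix the multiplicities.

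A secondary issue: you suggest matching coefficients of each power of $1/N^2$. That is not what the single-iteration theorem says and not what the proof does; the right-hand correction term $\E\tr[T^{\crss}f(\dots)]$ is still an expectation over GUE matrices and has its own genus expansion. The theorem is a single exact identity obtained from the interpolation, and the genuine genus-by-genus statement only emerges after iterating it (the Corollary). If you did want to make a direct genus-by-genus argument, you would still need the integral weights in $T^{\crss}$ to normalize the crossing multiplicity, so the obstacle you identified does not go away by changing the organizing principle.
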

	
	The utility of the formula comes from the fact that $T_{V,W}^{\crss}$ is an explicit combination of non-commutative derivatives and mappings of polynomial algebras induced by linear substitutions of the input variables, as described below.  The operator $T_{V,W}^{\crss}$ depends on $N$ since in fact its input $f$ is a non-commutative polynomial with coefficients in $\M_N$, but we suppress the dependence in the notation as the role of deterministic matrices in the formulas is the same for all $N$.  We now describe the construction of $T_{V,W}^{\crss}$, and we proceed more generally replacing $\M_N$ with an arbitrary $*$-algebra $\cA$.  That is, we will define $T_{V,W}^{\crss}$ for a general coefficient algebra $\cA$, and for Theorem \ref{thm:Parraud1} use the special case $\cA = \M_N$.  The definition of $T_{V,W}^{\crss}$ will be built up in stages, and the dependence on $\cA$ and $V$ and $W$ is suppressed in some of the notations as these objects are fixed throughout the whole discussion.
	
	\begin{observation}[Linear maps induce maps of polynomial algebras] \label{obs: induced map on polynomials}
		Given $\cA$ and real vector spaces $V$ and $W$ and a linear map $\phi: V \to W$, there is a unique $*$-algebra homomorphism $\phi_* = \cA\ip{\phi}: \cA\ip{V} \to \cA\ip{W}$ that restricts to the identity on $\cA$ and maps $x_v$ to $x_{\phi(v)}$ for all $v \in V$.  Thus, the mapping $V \mapsto \cA\ip{V}$ is a functor from real vector spaces to complex $*$-algebras.
	\end{observation}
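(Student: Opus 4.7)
The plan is to invoke the universal property of the free unital $*$-algebra that underlies the construction of $\cA\ip{V}$. Concretely, $\cA\ip{V}$ is built as the unital $*$-algebraic free product of $\cA$ with the tensor $*$-algebra of $V$ (viewed as a real inner product space, with the self-adjoint generators $x_v$), modulo the relations $x_{v_1 + v_2} = x_{v_1} + x_{v_2}$. So to produce $\phi_*$ I would first define a candidate $*$-algebra map on the free $*$-algebra generated by $\cA$ together with the symbols $\{x_v\}_{v \in V}$ by declaring it to be the inclusion on $\cA \hookrightarrow \cA\ip{W}$ and sending each symbol $x_v$ to $x_{\phi(v)} \in \cA\ip{W}$.

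Next I would verify that this candidate respects all the defining relations, and hence descends to a well-defined $*$-homomorphism on $\cA\ip{V}$. The $*$-algebra relations internal to $\cA$ are preserved because the map is the identity there. Each $x_{\phi(v)}$ is self-adjoint in $\cA\ip{W}$ by construction, so self-adjointness of the generators is preserved. The only nontrivial relation is the $\R$-linearity $x_{v_1 + v_2} = x_{v_1} + x_{v_2}$: this is sent to $x_{\phi(v_1 + v_2)} = x_{\phi(v_1) + \phi(v_2)}$, which equals $x_{\phi(v_1)} + x_{\phi(v_2)}$ by the linearity relation already present in $\cA\ip{W}$ and the linearity of $\phi$. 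This is the one place where the hypothesis that $\phi$ is linear is actually used, and it is precisely what makes the argument work; there is no real obstacle here.

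Uniqueness of $\phi_*$ is immediate from the fact that $\cA$ together with the family $\{x_v\}_{v \in V}$ generates $\cA\ip{V}$ as a unital $*$-algebra, so any two $*$-homomorphisms that agree on these generators must coincide. For the functoriality claim, given $\phi\colon V \to W$ and $\psi\colon W \to U$, both $(\psi \circ \phi)_*$ and $\psi_* \circ \phi_*$ restrict to the identity on $\cA$ and send $x_v \mapsto x_{\psi(\phi(v))}$, so they agree on generators and hence are equal by uniqueness; likewise $(\id_V)_* = \id_{\cA\ip{V}}$. This completes the verification that $V \mapsto \cA\ip{V}$ is a functor. The entire proof is a routine universal-property check, and the only ``step'' requiring attention is the compatibility with the linearity relation, which is handled by the linearity of $\phi$.
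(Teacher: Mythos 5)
Your proof is correct and is exactly the routine universal-property argument the paper has in mind; the paper states this as an ``Observation'' and gives no explicit proof, treating it as an immediate consequence of the definition of $\cA\ip{V}$ as a quotient of a free $*$-algebra. You correctly identify the one non-tautological step (preservation of the linearity relation $x_{v_1+v_2} = x_{v_1} + x_{v_2}$, which uses linearity of $\phi$) and handle uniqueness and functoriality cleanly via generation by $\cA$ and the $x_v$'s.
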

	
	\begin{notation}[Free difference quotient, cf. {\cite[\S 3]{VoiculescuFE5}}] \label{not: free diff quot}
		Fix a $*$-algebra $\cA$ and real vector spaces $V$ and $W$.  Let $V_{\C}$ denote the complexification of $V$. We define
		\[
		\partial_V: \cA\ip{V \oplus W} \to V_{\C} \otimes_{\C} \cA\ip{V \oplus W} \otimes_{\C} \cA\ip{V \oplus W}
		\]
		as the unique derivation such that
		\begin{align*}
			\partial_V|_{\cA} &= 0 \\
			\partial_V(x_v) &= v \otimes 1 \otimes 1 \text{ for } v \in V \\
			\partial_V(x_w) &= 0 \text{ for } w \in W.
		\end{align*}
		When we say that $\partial_V$ is a derivation, we mean that
		\[
		\partial_V(f(x)g(x)) = f(x) \partial g(x) + \partial f(x) g(x),
		\]
		where the first term on the right-hand side uses the multiplication map sending $(a, v \otimes b \otimes c)$ to $v \otimes ab \otimes c$ and the second term uses the multiplication map sending $(v \otimes a \otimes b, c)$ to $v \otimes a \otimes bc$.
	\end{notation}
	
	\begin{remark} \label{rem: canonical basis index}
	The role of the left-most tensor $V$ is to keep track of ``which variable'' has been differentiated.  Indeed, consider the case where $V = \R^m$, $W = \R^n$, and $V \oplus W$ is identified with $\R^{m+n}$.  Let $x_j$ be the variable associated to the standard basis vector $e_j$.  By linearity, the variables $(x_{v\oplus w})_{v \oplus w \in V \oplus W}$ can be represented by $(x_1,\dots,x_{m+n})$.  Let $\partial_j$ be the free difference quotient with respect to the variable $x_j$ from \cite[\S 3]{VoiculescuFE5}.  Then
	\[
	\partial_V f(x_V,x_W) = \sum_{j=1}^m e_j \otimes \partial_j f(x_1,\dots,x_{m+n}).
	\]
	In other words, $\partial_V$ can be understood as the $m$-tuple of free difference quotients $\partial_1 f$, \dots, $\partial_m f$ in each variable.
	\end{remark}
	
	The uniqueness of the free difference quotient map is immediate from the fact that it is a derivation and $\cA \ip{V}$ is generated by $\cA$ and the $x_v$'s.  To prove existence, one can define the map on monomials of each degree explicitly (see \cite[\S 8.1]{MingoSpeicher}), and check that it satisfies the desired properties.  Since we are working in the vector space framework, one also uses multilinearity to verify the well-definedness, e.g. on for the monomials of degree one, since $(v \oplus w) \mapsto v \otimes 1 \otimes 1$ is linear, and so therefore, there is a well-defined map from $\Span(x_u: u \in V \oplus W) \to V_{\C} \otimes_{\C} \cA\ip{V \oplus W} \otimes_{\C} \cA\ip{V \oplus W}$.
	
	\begin{remark}[Real versus complex tensor products]
	We use real vector spaces $V$ to index the variables, but at the same time complex $*$-algebras are natural in the operator-algebraic setting.  Thus, both real and complex tensor products appear in this paper.  In Notation \ref{not: free diff quot}, we took the complexification $V_{\C}$ in order to avoid mixing real and complex tensor products in the same equation.  Generally, real and complex vector spaces in this paper can be distinguished based on context:  The indexing vector spaces are real, but the spaces of polynomials are complex.  An expression where $V$ appears is a real vector space with tensor products occurring over $\R$, and an expression where $V_{\C}$ appears is a complex vector space with tensor products occurring over $\C$.  Thus, we will suppress the subscripts $\R$ and $\C$ on the tensor signs in the rest of the paper.  The distinction between $\R$ and $\C$ plays little role in the arguments in any case.
	\end{remark}
	
	Now building up to the definition of crossing derivatives, we need several more ingredients, including the cyclic gradient and the free Laplacian.
	
	\begin{notation}[Permuted multiplication maps]
		For any algebra $\cA$, let $\mathfrak{m}: \cA \otimes \cA \to \cA$ be the multiplication map.  More generally, given $m \in \N$ and a permutation $[j_1 \dots j_m]$ of $[m]$ (here the permutation is written as a list, not in cycle notation), we write
		\[
		\mathfrak{m}_{j_1,\dots,j_m}: \cA^{\otimes m} \to \cA: a_1 \otimes \dots \otimes a_m \mapsto a_{j_1} \dots a_{j_m}.
		\]
	\end{notation}
	
	\begin{notation}[Cyclic gradient] \label{not: cyclic derivative}
		Given $\cA$, $V$, and $W$ as above, the \emph{cyclic gradient} $D_V^\circ$ is the map
		\[
		D_V^\circ = (\id_V \otimes \mathfrak{m}_{2,1}) \circ \partial_V: \cA\ip{V \oplus W} \to V \otimes \cA\ip{V \oplus W}.
		\]
	\end{notation}
	
	\begin{notation}[Pairing map]
		For a real inner-product space $V$, we denote by $\mathfrak{n}_V: V \otimes V \to \R$ the map $v \otimes w \mapsto \ip{v,w}$.  We also denote by $\mathfrak{n}_V$ the $\C$-bilinear (not sesquilinear) extension to the complexification $V_{\C} \otimes_{\C} V_{\C} \to \C$.
	\end{notation}
	
	\begin{definition}
		Given $\cA$, $V$, and $W$ as above, the (algebraic) \emph{free Laplacian} $L_V: \cA\ip{V \oplus W} \to \cA\ip{V \oplus W} \otimes_{\C} \cA\ip{V \oplus W}$ is given by the following composition of maps:
		\[
		\begin{tikzcd}
			\cA\ip{V \oplus W} \arrow{d}{D_V^\circ} \\
			V_{\C} \otimes \cA\ip{V \oplus W} \arrow{d}{\id_V \otimes \partial_V} \\
			V_{\C} \otimes V_{\C} \otimes \cA\ip{V \oplus W} \otimes \cA\ip{V \oplus W} \arrow{d}{\mathfrak{n}_V \otimes \id_{\cA\ip{V \oplus W}} \otimes \id_{\cA\ip{V \oplus W}}} \\
			\cA\ip{V \oplus W} \otimes \cA\ip{V \oplus W}
		\end{tikzcd}
		\]
		We remark that if $V = \R^m$ and $W = \R^n$ and $x_j = x_{e_j}$, we have
		\[
		L_V f = \sum_{j=1}^m \partial_j \circ D_j^\circ f.
		\]
	\end{definition}
	
	\begin{definition} \label{def: crossing derivative}
		Given real inner-product spaces $V$ and $W$, we define the \emph{crossing derivative} $D_{V,V}^{\crss}: \cA\ip{V \oplus W} \to \cA\ip{(V \oplus W) \otimes \R^4}$ as follows.  Let $\id_{V \oplus W} \otimes e_j: V \oplus W \to (V \oplus W) \otimes \R^2$ denote the map $(v \oplus w) \mapsto (v \oplus w) \otimes e_j$ for $j = 1, \dots, 4$.  Then $D_{V,V}^{\crss}$ is the operator given by the following composition of maps:
		\[
		\begin{tikzcd}
			\cA\ip{V \oplus W}
			\arrow{d}{\frac{1}{2} L_V} \\
			\cA\ip{V \oplus W} \otimes \cA\ip{V \oplus W}
			\arrow{d}{\partial_{V} \otimes \partial_{V}} \\
			\bigl(V_{\C} \otimes \cA\ip{V \oplus W} \otimes \cA\ip{V \oplus W} \bigr) \otimes \bigl(V_{\C} \otimes \cA\ip{V \oplus W} \otimes \cA\ip{V \oplus W} \bigr)
			\arrow{d}{\cong} \\
			\bigl(V_{\C} \otimes V_{\C}\bigr) \otimes \cA\ip{V \oplus W} \otimes \cA\ip{V \oplus W} \otimes \cA\ip{V \oplus W} \otimes \cA\ip{V \oplus W}
			\arrow{d}{\mathfrak{n}_{V} \otimes (\id_{V \oplus W} \otimes e_1)_* \otimes (\id_{V \oplus W} \otimes e_2)_* \otimes (\id_{V \oplus W} \otimes e_3)_* \otimes (\id_{V \oplus W} \otimes e_4)_*} \\
			\cA\ip{(V \oplus W) \otimes \R^4} \otimes \cA\ip{(V \oplus W) \otimes \R^4} \otimes \cA\ip{(V \oplus W) \otimes \R^4} \otimes \cA\ip{(V \oplus W) \otimes \R^4}
			\arrow{d}{\mathfrak{m}_{4,3,2,1}} \\
			\cA\ip{(V \oplus W) \otimes \R^4}.
		\end{tikzcd}
		\]
		The third map is the isomorphism that swaps the order of the tensor product while keeping the two copies of $V$ in the same order and the four copies of $\cA\ip{(V \oplus W) \otimes \R^4}$ in the same order.
		
		We also define $D_{V,W}^{\crss}$ similarly with the first map still being $L_V$, the second being $\partial_W \otimes \partial_W$, and the fourth map having $\mathfrak{n}_W$ instead of $\mathfrak{n}_V$.
	\end{definition}
	
	The crossing derivatives map $D_{V,V}^{\crss}$ and $D_{V,W}^{\crss}$ map from $\cA\ip{V \oplus W}$ to $\cA\ip{(V \oplus W) \otimes \R^4}$, which means $D_{V,V}^{\crss}f$ and $D_{V,W}^{\crss}f$ are functions of ``four times as many variables'' as $f$ is.  The application of third operation in the composition creates four copies of each variable in order to keep track of which of the four tensorands they were in, before they are later mixed together by multiplication at the final step.  Moreover, the crossing derivative is a ``fourth order free differential operator'' overall since it has an application of $L_V$ and then two applications of $\partial_{V}$ or $\partial_W$, one on each side of the tensor sign.  We call these operators ``crossing derivatives'' because of their relationship with crossings for partitions in the genus expansion.  Heuristically, for a monomial $f(x) = x_{u(1)} \dots x_{u(k)}$ where each $u(j)$ is either in $V$ or $W$, the $L_V f$ is a linear combination of terms where two of the $x_{u(j)}$'s for $u(j) \in V$ have been removed.  Then applying $\partial_V \otimes \partial_V$ creates terms where another pair of $x_{u(j)}$'s has been removed which forms a crossing with the first pair.
	
	\begin{example}[Crossing derivative of a sixth-degree monomial]
	Let us illustrate the definition $D_{V,V}^{\crss}$ with an example.  Here for simplicity we take $W = 0$, so only $V$ appears. Let $f$ be a sixth-degree mononomial
	\[
	f(x) = x_{v(1)} x_{v(2)} x_{v(3)} x_{v(4)} x_{v(5)} x_{v(6)}.
	\]
	Then
	\begin{multline*}
	L_V = \sum_{1 \leq i < j \leq 6} \ip{v(i),v(j)} \Bigl(x_{v(i+1)} \dots x_{v(j)} \otimes x_{v(j+1)} \dots x_{v(6)} x_{v(1)} \dots x_{v(i-1)} \\
	+ x_{v(j+1)} \dots x_{v(6)} x_{v(1)} \dots x_{v(i-1)} \otimes x_{v(i+1)} \dots x_{v(j)}
	\Bigr).
	\end{multline*}
	There are $\binom{6}{2} \times 2 = 30$ terms in all.  For example, the terms with $i = 2 < j$ are
	\begin{align*}
	&\ip{v(2),v(3)} \Bigl( 1 \otimes x_{v(4)} x_{v(5)} x_{v(6)} x_{v(1)} + x_{v(4)} x_{v(5)} x_{v(6)} x_{v(1)} \otimes 1 \Bigr) \\
	&\ip{v(2),v(4)} \Bigl( x_{v(3)} \otimes x_{v(5)} x_{v(6)} x_{v(1)} + x_{v(5)} x_{v(6)} x_{v(1)} \otimes x_{v(3)} \Bigr) \\
	&\ip{v(2),v(5)} \Bigl( x_{v(3)} x_{v(4)} \otimes x_{v(6)} x_{v(1)} + x_{v(6)} x_{v(1)} \otimes x_{v(3)} x_{v(4)} \Bigr) \\
	&\ip{v(2),v(6)} \Bigl( x_{v(3)} x_{v(4)} x_{v(5)} \otimes x_{v(1)} + x_{v(1)} \otimes x_{v(3)} x_{v(4)} x_{v(5)} \Bigr).
	\end{align*}
	The factor of $(1/2)$ in front of $L_V$ serves to counteract the double counting that happens due to the terms occuring in flip-symmetric pairs.  After applying $\tfrac{1}{2} L_V$, we apply $\partial_V \otimes \partial_V$.  If there is $1$ on one side of the tensor sign, then the corresponding term will vanish since $\partial_V(1) = 0$.  This occurs when the indices $i$ and $j$ are cyclically consecutive.  For instance, the terms $1 \otimes x_{v(4)} x_{v(5)} x_{v(6)} x_{v(1)} + x_{v(4)} x_{v(5)} x_{v(6)} x_{v(1)} \otimes 1 $ from indices $i = 2$, $j = 3$ will vanish under $\partial_V \otimes \partial_V$.  Thus, only $20$ of the terms from $L_V$ will survive application of $\partial_V \otimes \partial_V$.  In general, when we apply $\partial_V \otimes \partial_V$ to each of the terms from $L_V f$, the first copy of $\partial_V$ will differentiate every possible variable on the left side of the tensor sign and the second $\partial_V$ will differentiate every possible variable on the right side of the tensor sign.  For instance, taking the term with $i = 2$, $j = 4$ from above, we compute
	\begin{align*}
	& \partial_V \otimes \partial_V \left[ x_{v(3)} \otimes x_{v(5)} x_{v(6)} x_{v(1)} \right] \\
	=& \bigl( v(3) \otimes 1 \otimes 1 \bigr) \otimes \bigl( v(5) \otimes 1 \otimes  x_{v(6)} x_{v(1)} \bigr) \\
	&+ \bigl( v(3) \otimes 1 \otimes 1 \bigr) \otimes \bigl( v(6) \otimes x_{v(5)} \otimes x_{v(1)} \bigr) \\
	&+ \bigl( v(3) \otimes 1 \otimes 1 \bigr) \otimes \bigl( v(1) \otimes x_{v(5)} x_{v(6)} \otimes 1 \bigr);
	\end{align*}
	here for ease of notation we dropped the factors $(1/2)$ and $\ip{v(2),v(4)}$ which will need to be included in the final expression.
	The next operation in our definition is simply to reorder the tensorands to put the vectors at the beginning, for notational convenience.  Thus, for instance,
	\[
	\bigl( v(3) \otimes 1 \otimes 1 \bigr) \otimes \bigl( v(6) \otimes x_{v(5)} \otimes x_{v(1)} \bigr) \mapsto \bigl( v(3) \otimes v(6) \bigr) \otimes 1 \otimes 1 \otimes x_{v(5)} \otimes x_{v(1)}.
	\]
	We next apply the bilinear pairing $\mathfrak{n}_V$ to the vectors as well as transform the variables over $V$ into variables over $V \otimes \R^4$.  Thus, for instance,
	\[
	\bigl( v(3) \otimes v(6) \bigr) \otimes 1 \otimes 1 \otimes x_{v(5)} \otimes x_{v(1)} \mapsto \ip{v(3),v(6)} \otimes 1 \otimes 1 \otimes x_{v(5) \otimes e_3} \otimes x_{v(1) \otimes e_4}.
	\]
	Finally, we apply the permuted multiplication $\mathfrak{m}_{4,3,2,1}$ to get
	\[
	\ip{v(3),v(6)} \otimes 1 \otimes 1 \otimes x_{v(5) \otimes e_3} \otimes x_{v(1) \otimes e_4} \mapsto \ip{v(3),v(6)} x_{v(1) \otimes e_4} x_{v(5) \otimes e_3}.
	\]
	Including the factors that we dropped before, we see that one term in $D_{V,V}^{\crss} f$ will be
	\[
	\tfrac{1}{2} \ip{v(2),v(4)} \ip{v(3),v(6)} x_{v(1) \otimes e_4} x_{v(5) \otimes e_3}.
	\]
	The index pairs $\{2,4\}$ and $\{3,6\}$ represent a crossing, i.e., a pair of index pairs $\{i,j\}$ and $\{i',j'\}$ such that $i < i' < j < j'$, or more generally the two pairs are interspersed in an alternating fashion when the four indices are written in increasing order.  To give another example, suppose that in $L_V$ we looked at the second term with $i = 3$ and $j = 6$, namely,
	\[
	\ip{v(3),v(6)} x_{v(1)} x_{v(2)} \otimes x_{v(4)} x_{v(5)},
	\]
	and then suppose that in the application of $\partial_V \otimes \partial_V$ we choose the term which differentiates $x_{v(1)}$ on the left and $x_{v(5)}$ on the right.  Then the final contribution to the crossing derivative would be
	\[
	\tfrac{1}{2} \ip{v(3),v(6)} \ip{v(1),v(5)} x_{v(4) \otimes e_3} x_{v(2) \otimes e_2}.
	\]
	Overall, $D_{V,V}^{\crss} f$ has terms arising from each choice of indices forming a crossing.  Every set of $4$ elements from $\{1,\dots,6\}$ can be uniquely partitioned into two pairs that form a crossing, resulting in $\binom{6}{4} = 15$ possibilities.  But for each crossing, there is a choice of which pair came from $L_V$ and which pair came from $\partial_V \otimes \partial_V$, and additionally which term is on which side of the tensor side in $L_V$; both of these choices affect the final assignment of vectors in $V \otimes \R^4$ and the final order in which the variables will be multiplied.  Hence, the total number of terms is $15 \cdot 4 = 60$.
	\end{example}
	
	Finally, we can define the operator $T_{V,W}^{\crss}$ occurring in Parraud's formula.
	
	\begin{definition} \label{def: T cross}
		Let $\cA$ be a $*$-algebra and $V$, $W$ real inner-product spaces.  For $s, t \in [0,1]$, define the linear map $\alpha_{s,t}$ by
		\begin{align}
			\alpha_{s,t}: (V \oplus W) \otimes \R^4 &\to V \oplus [(V \oplus W) \otimes \R^6] \nonumber \\
			(v \oplus w) \otimes e_j &\mapsto t^{1/2}v \oplus [((1-t)^{1/2} v \oplus w) \otimes (s^{1/2} e_{5+\delta(j,2)+\delta(j,3)} + (1-s)^{1/2} e_j)], \label{eq:alphast}
		\end{align}
		where $\delta(j,k)$ is the Kronecker $\delta$.  Let $(\alpha_{s,t})_*: \C \ip{(V \oplus W) \otimes \R^4} \to \C \ip{V \oplus [(V \oplus W) \otimes \R^6]}$ be the induced map on polynomial algebras per Observation \ref{obs: induced map on polynomials}. We then define the operator
		\[
		T_{V,W}^{\crss}: \cA\ip{V \oplus W} \to \cA\ip{V \oplus [(V \oplus W) \otimes \R^6]}
		\]
		by
		\[
		T_{V,W}^{\crss}(f) := \int_0^1 \int_0^1 (\alpha_{s,t})_* \circ [(1 - t)D_{V,V}^{\crss} + D_{V,W}^{\crss}](f)\,ds\,dt.
		\]
	\end{definition}
	
	The integral may be understood purely algebraically in the following sense:  For each $f$, the degree of $D_{V,V}^{\crss}(f)$ is less than or equal to $\deg(f) - 4$.  Because $\cA \ip{\alpha_{s,t}}$ does not increase the degree, $(\alpha_{s,t})_* [(1 -t)D_{V,V}^{\crss} + D_{V,W}^{\crss}] f$ is an element of the finite-dimensional vector space $U$ consisting of polynomials of degree at most $\deg(f)$ with coefficients drawn from a certain finite-dimensional subspace of $\cA$. Of course, the coefficients are continuous (even polynomial) functions of $(s,t)$.  We can thus fix a basis for $U$ and integrate each coefficient in this basis individually; one can check that the result does not depend on the choice of basis.
	
	Finally, we remark on differences between our statement and Parraud's.  We have used $t^{1/2}$ and $(1 - t)^{1/2}$ for $t \in [0,1]$ in roughly the place where Parraud uses $e^{-t/2}$ and $(1 - e^{-t})^{1/2}$ for $t \in [0,\infty)$.  This is more convenient for the combinatorial proof, and because it makes the integration occur over a finite interval.  Furthermore, since we have restricted our attention to polynomials, there are no exponential terms in our formula.  If the formula were extended to smooth functions and applied to $e^{i p(x)}$, then one would have to use the identity
	\[
	\partial [e^{ip(x)}] = \int_0^1 e^{i(1-u)p(x)} \partial p(x) e^{iup(x)}\,du
	\]
	in conjunction with the formula in Theorem \ref{thm:Parraud1} above, resulting in integration over even more parameters.

	\subsection{Iterated Parraud's formula} \label{subsec: higher order}
	
	By applying Theorem \ref{thm:Parraud1} iteratively, one obtains the following result.
	
	\begin{corollary} \label{cor: iterated Parraud}
		Let $V$ and $W$ be real inner product spaces.  Define inductively $W^{(0)} = W$ and $W^{(k+1)} = (V \oplus W^{(k)}) \otimes \R^6$.  Then for $m \in \N$,
		\begin{multline*}
			\E \circ \tr_{\M_N * \cM_V}[f(X_V^{(N)},X_W)] = \sum_{k=0}^m \frac{1}{N^{2k}} \tr_{\cM_{V \oplus W^{(k)}}}[T_{V,W^{(k-1)}}^{\crss} \dots T_{V,W^{(0)}}^{\crss,(N)} f(X_V,X_{W^{(k)}})] \\
			+ \frac{1}{N^{2(m+1)}} \E \circ \tr_{\cM_{W^{(m+1)}}}[T_{V,W^{(m)}}^{\crss} \dots  T_{V,W^{(0)}}^{\crss} f(X_V^{(N)},X_{W^{(m+1)}})].
		\end{multline*}
	\end{corollary}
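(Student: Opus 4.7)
The plan is to prove the corollary by induction on $m$, with Theorem \ref{thm:Parraud1} serving as both the base case and the inductive step. The key observation is that $T_{V,W}^{\crss,(N)}$ maps $\M_N\ip{V\oplus W}$ into $\M_N\ip{V\oplus [(V\oplus W)\otimes\R^6]} = \M_N\ip{V \oplus W^{(1)}}$, so the output of one application of Parraud's formula is again a polynomial of the same basic shape (a polynomial in variables indexed by $V$ together with an auxiliary inner-product space), just with $W$ replaced by $W^{(1)}$. This means the theorem can be applied again to that output.

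The base case $m=0$ is exactly Theorem \ref{thm:Parraud1}, noting that the $k=0$ summand involves an empty composition of $T$-operators and reduces to $\tr_{\cM_{V\oplus W}}[f(X_V,X_W)]$, and that $W^{(1)} = (V\oplus W)\otimes\R^6$. For the inductive step, assume the formula holds for $m-1$, so that
\begin{multline*}
\E \circ \tr_{\M_N*\cM_V}[f(X_V^{(N)},X_W)] = \sum_{k=0}^{m-1} \frac{1}{N^{2k}} \tr_{\cM_{V\oplus W^{(k)}}}[T_{V,W^{(k-1)}}^{\crss,(N)} \cdots T_{V,W^{(0)}}^{\crss,(N)} f(X_V,X_{W^{(k)}})] \\
+ \frac{1}{N^{2m}} \E \circ \tr_{\M_N * \cM_{W^{(m)}}}[T_{V,W^{(m-1)}}^{\crss,(N)} \cdots T_{V,W^{(0)}}^{\crss,(N)} f(X_V^{(N)}, X_{W^{(m)}})].
\end{multline*}
Set $g := T_{V,W^{(m-1)}}^{\crss,(N)} \cdots T_{V,W^{(0)}}^{\crss,(N)} f \in \M_N\ip{V\oplus W^{(m)}}$, and apply Theorem \ref{thm:Parraud1} to $g$ with $W$ replaced by $W^{(m)}$. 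Since $(V \oplus W^{(m)})\otimes\R^6 = W^{(m+1)}$ by definition, this gives
\[
\E \circ \tr[g(X_V^{(N)},X_{W^{(m)}})] = \tr[g(X_V,X_{W^{(m)}})] + \frac{1}{N^2} \E \circ \tr[T_{V,W^{(m)}}^{\crss,(N)} g(X_V^{(N)},X_{W^{(m+1)}})].
\]
Multiplying through by $1/N^{2m}$ and substituting back into the inductive hypothesis splits the previous error term into the new $k=m$ summand and the new error term at order $1/N^{2(m+1)}$, completing the induction.

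The only step that requires a bit of care is bookkeeping of the inner-product spaces and the composition of the $T$-operators: one must verify that $T_{V,W^{(m)}}^{\crss,(N)}$ is the correct operator to apply to $g$ (which it is, since $g$ lives in $\M_N\ip{V\oplus W^{(m)}}$), and that the new error term indeed involves the composition $T_{V,W^{(m)}}^{\crss,(N)} \cdots T_{V,W^{(0)}}^{\crss,(N)}$ in the correct order. Beyond this indexing, the argument is a mechanical iteration with no analytic obstacle, since the integrals in each $T_{V,W^{(k)}}^{\crss,(N)}$ commute with trace and expectation, and each application is justified on the nose by Theorem \ref{thm:Parraud1}.
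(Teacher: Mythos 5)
Your proof is correct and takes essentially the same approach the paper intends; the paper itself gives no detailed proof of the corollary, saying only that it follows ``by applying Theorem \ref{thm:Parraud1} iteratively,'' and your induction is precisely that iteration spelled out, with the key bookkeeping point correctly identified (namely that $T^{\crss,(N)}_{V,W^{(k)}}$ lands in $\M_N\ip{V \oplus W^{(k+1)}}$, so Theorem \ref{thm:Parraud1} applies to the output with $W$ replaced by $W^{(k+1)}$).
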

	
	As further motivation, we observe a rough parallel between this asymptotic expansion and Taylor's formula with integral remainder, which says that for a $C^{m+1}$ function $f$,
	\[
	f(1) = \sum_{k=0}^m \frac{1}{k!} f^{(k)}(0) + \frac{1}{m!} \int_0^1 (1 - t)^m f^{(m+1)}(t)\,dt.
	\]
	In the version of Parraud's formula stated here, both the derivative terms and the remainder term involve integration over $[0,1]$ of some powers of $t_j$, $1 - t_j$, $s_j$, and $1 - s_j$, where $(s_j,t_j)$ are the parameters introduced in the $j$th occurrence of an operator $T^{\crss}$.  The proof is also based on interpolating between $t = 0$ and $t = 1$, expressing $f(1) - f(0)$ as an integral, and then repeating this process for higher order terms.
	
	The parallel is more apparent if we use Fubini's theorem rather than integration by parts in Taylor's formula, which results in
	\[
	f(1) = \sum_{k=0}^m \int_{0 \leq t_1 \leq \dots \leq t_k \leq 1} f^{(k)}(0) \,dt_1 \dots dt_k + \int_{0 \leq t_1 \leq \dots \leq t_{m+1} \leq 1} f^{(m+1)}(t_1)\,dt_1 \dots dt_{m+1}.
	\]
	Thus, the $1/k!$ represents the volume of the $k$-simplex $1 \leq t_1 \leq t_2 \leq \dots \leq t_k \leq 1$.  We can then parameterize the simplex by the map
	\[
	F: [0,1]^k \to \{0 \leq t_1 \leq \dots \leq t_k \leq 1\}, \qquad
	(s_1,\dots,s_k) \mapsto (s_1 \dots s_k, \, s_2 \dots s_k, \, \dots, \, s_k),
	\]
	and this change of variables leads to
	\begin{multline*}
		f(1) = \sum_{k=0}^m \int_{[0,1]^k} f^{(k)}(0) s_1^{k-1} s_2^{k-2} \dots s_{k-1}^1 \,ds_1 \dots ds_k \\ + \int_{[0,1]^{m+1}} f^{(m+1)}(s_1 \dots s_{m+1})\,s_1^m s_2^{m-1} \dots s_m^1 \,ds_1 \dots ds_{m+1}.
	\end{multline*}
	Corollary \ref{cor: iterated Parraud} has a similar form to this version of Taylor's theorem, but the integration is over $[0,1]^{2k}$ rather than $[0,1]^k$, and the integrand depends in a more complicated way on the parameters $t_1$, \dots, $t_k$ and $s_1$, \dots, $s_k$, where $(s_j,t_j)$ are the parameters introduced in the $k+1-j$th iteration.   Meanwhile, in Parraud's original formula, the parameters $s_j$ and $t_j$ are integrated over a region defined by a family of inequalities, namely the set $A_i$ in \cite[Proposition 3.6]{Parraud2023}, which corresponds in our analogy to the simplex in our statement of Taylor's formula.
	
	Finally we remark that the appearance of a fourth-order differential operator in Parraud's formula, which may seem surprising at first, is actually natural from the viewpoint of the genus expansion for GUE.  Indeed, for the trace of a non-commutative monomial of degree $2m$, the highest genus that can arise in this formula is $\lfloor m/2 \rfloor$ (for proof, see Remark \ref{rem: max genus}).  Hence, the genus expansion terminates after $\lfloor m/2 \rfloor$ terms.  Meanwhile, since $T^{\crss}$ reduces the degree of any monomial by $4$, we see that $\lfloor m/2 \rfloor + 1$ applications of $T^{\crss}$ will annihilate any polynomial of degree $2m$, and hence Parraud's formula also terminates after $\lfloor m/2 \rfloor$ terms.

	\subsection*{Acknowledgements}
	
	This work was partly supported by the National Science Foundation (US), grant DMS-2002826 in 2020-2023; the National Sciences and Engineering Research Council (Canada), grant RGPIN-2017-05650 in 2023-2024; and Denmark's Independent Research Fund in 2024.
	
	The Fields Institute, where part of this work was done, is on the traditional land of the Huron-Wendat, the Seneca, and the Mississaugas of the Credit.
	
	I thank Ian Charlesworth, Ben Hayes, Jamie Mingo, Akihiro Miyagawa, F{\'e}lix Parraud, Dimitri Shlyakhtenko, Roland Speicher for discussions about this work.  I thank the referee for feedback that improved the exposition of the paper.
	
	\section{Preliminaries} \label{sec: preliminaries}
	
	\subsection{GUE random matrices}
	
	$\M_N = M_N(\C)$ denotes the $N \times N$ complex matrices.  We write $\tr_N = (1/N) \Tr_N$ for the normalized trace.  We view $\M_N$ as an inner product space with the inner product given by $\ip{X,Y} = \tr_N(X^*Y)$.  Let $(\M_N)_{\sa}$ be the real subspace of self-adjoint matrices.  The inner product is real on $(\M_N)_{\sa}$, and $\M_N$ can be canonically identified with the complexification $\C \otimes_{\R} (\M_N)_{\sa}$.
	
	Since $(\M_N)_{\sa}$ is a real inner-product space of dimension $N^2$ and hence is isomorphic as a real inner-product space to $\R^{N^2}$.  Thus, there is a canonical choice of Lebesgue measure on $(\M_N)_{\sa}$ given by transporting via an isometry.  Moreover, the gradient and divergence operations for functions on $(\M_N)_{\sa}$ are defined with respect to the inner-product structure, or equivalently by transporting the gradient and divergence operations from $\R^{N^2}$.  It is elementary to check that these definitions are independent of the choice of linear isometry $(\M_N)_{\sa} \to \R^{N^2}$.  However, it is sometimes convenient to use the basis
	\[
	\mathcal{B} = \{ \sqrt{N} E_{j,j}\}_{1 \leq j \leq N} \cup \{\sqrt{N/2}(E_{j,k} + E_{k,j})\}_{1 \leq j < k \leq N} \cup \{\sqrt{N/2}(iE_{j,k} - iE_{k,j})\}_{1 \leq j < k \leq N}.
	\]
	
	\begin{definition}
		A \emph{standard $N \times N$ GUE random matrix} is a random variable $X$ with values in $(\M_N)_{\sa}$ which has probability density on $(\M_N)_{\sa}$ proportional to $e^{N^2 \tr_N(X^2)}$.  Equivalently, $X$ is self-adjoint and
		\[
		(\sqrt{N} X_{j,j})_{j=1}^N \cup (\sqrt{2N} \re(X_{i,j}))_{i<j} \cup (\sqrt{2N} \im(X_{i,j}))_{i<j}
		\]
		are independent standard normal random variables.
	\end{definition}
	
	Next, we recall the following results on integration by parts for GUE matrices, which we will use in the proof of the genus expansion.  These facts are well-known in random matrix theory and the arguments are similar to \cite{Rains1997}, \cite{Cebron2013}, \cite[\S 3]{DHK2013}, \cite[\S 3.20]{GuionnetParkCity}, \cite[\S 14.1]{JekelThesis}.  They are also closely related to the Wick formula; see e.g.\ \cite[\S 1.5]{MingoSpeicher}.
	
	\begin{fact} \label{fact: IBP}
		Let $f: (\M_N)_{\sa} \to \M_N$ be a polynomial (or more generally a smooth function such that $f$ and its first derivatives grow polynomially at $\infty$), and let $X^{(N)}$ be an $N \times N$ standard GUE matrix.  Then
		\[
		\E \ip{X^{(N)},f(X^{(N)})} = \frac{1}{N^2} \E \Div[f(X^{(N)})].
		\]
	\end{fact}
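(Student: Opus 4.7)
\emph{Proof proposal.} The plan is to reduce the identity to the one-dimensional Gaussian integration by parts formula by expanding $X^{(N)}$ in the orthonormal basis $\mathcal{B}$ of $(\M_N)_{\sa}$ just introduced.

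The first step is to compute the joint law of the coefficients $c_b := \ip{b, X^{(N)}}$ for $b \in \mathcal{B}$. A direct check using the prefactors $\sqrt{N}$ and $\sqrt{N/2}$ in the basis elements, combined with the characterization of a standard GUE matrix via its independent real Gaussian entries, shows that the $c_b$ are i.i.d.\ centered real Gaussians of variance $1/N^2$; equivalently, the push-forward of the law of $X^{(N)}$ along $X \mapsto (c_b)_{b \in \mathcal{B}}$ is the Gaussian on $\R^{N^2}$ with density proportional to $\exp\bigl(-\tfrac{N^2}{2} \sum_b c_b^2\bigr)$.

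Next I would apply the scalar Gaussian integration by parts identity $\E[Z h(Z)] = \sigma^2 \E[h'(Z)]$ (valid for $Z \sim N(0,\sigma^2)$ and $h$ smooth with polynomial growth) coordinate by coordinate in this basis. Splitting $f = f_1 + i f_2$ into self-adjoint real and imaginary parts and using the chain rule $\partial_{c_b} f(X^{(N)}) = Df(X^{(N)})[b]$, one obtains
\[
\E\bigl[c_b \ip{b, f(X^{(N)})}\bigr] = \frac{1}{N^2} \E \ip{b, Df(X^{(N)})[b]}.
\]
Summing over $b \in \mathcal{B}$, the left-hand side becomes $\E \ip{X^{(N)}, f(X^{(N)})}$ by linearity, while the right-hand side is by definition $(1/N^2) \E \Div f(X^{(N)})$ (the trace of the differential in an orthonormal basis, extended $\C$-linearly in the output component). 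The whole argument is essentially bookkeeping; the only mildly delicate point is matching the normalization constants, with the factor $1/N^2$ arising entirely as the common variance of the $c_b$, and the polynomial growth hypothesis is exactly what is needed for the scalar IBP to apply in each coordinate and for Fubini to justify interchanging the finite sum over $\mathcal{B}$ with the expectation.
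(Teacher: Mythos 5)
Your proposal is correct, but there is nothing in the paper to compare it against: the paper states Fact \ref{fact: IBP} without proof, deferring to standard references (\cite{Rains1997}, \cite{Cebron2013}, \cite[\S 3]{DHK2013}, \cite[\S 3.20]{GuionnetParkCity}, \cite[\S 14.1]{JekelThesis}) since the result is classical. Your derivation is the standard one and it checks out: the basis $\mathcal{B}$ is indeed orthonormal for $\ip{X,Y} = \tr_N(X^*Y)$, and expanding $X^{(N)}=\sum_b c_b b$ gives $c_b=\ip{b,X^{(N)}}\sim N(0,1/N^2)$ i.i.d.\ (e.g.\ for $b=\sqrt{N/2}(E_{j,k}+E_{k,j})$ one finds $c_b=\sqrt{2/N}\,\re(X^{(N)}_{j,k})$, which has variance $\tfrac{2}{N}\cdot\tfrac{1}{2N}=1/N^2$); then coordinate-wise scalar Gaussian IBP with the chain rule $\partial_{c_b}f(X^{(N)})=Df(X^{(N)})[b]$ yields the termwise identity, and summing over $b\in\mathcal{B}$ gives $\E\ip{X^{(N)},f(X^{(N)})}=\frac{1}{N^2}\E\,\Div f(X^{(N)})$ since $\Div$ is by definition $\sum_b\ip{b,Df[b]}$ in any orthonormal basis. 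The real/imaginary splitting of $f$ is a reasonable way to phrase the $\C$-linear extension of $\ip{\cdot,\cdot}$ and $\Div$, though one can also just work directly with the complex-bilinear extension as the paper implicitly does later in the proof of Proposition \ref{prop:genusexpansion}.
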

	
	\begin{fact} \label{fact: polynomial divergence}
		Let $f: (\M_N)_{\sa} \to \M_N$ be given by
		\[
		F(X) = A_0XA_1 \dots X A_k,
		\]
		where $A_0$, \dots, $A_k$ are deterministic matrices.  Then
		\[
		\frac{1}{N^2} \Div[F(X)] = \sum_{j=1}^k \tr_N(A_0 X A_1 \dots XA_{j-1}) \tr_N(A_jXA_{j-1} \dots XA_k),
		\]
		Moreover, let $\phi(X) = \tr_N(F(X))$.  Then
		\[
		\nabla \phi(X) = \sum_{j=1}^k A_jXA_{j-1} \dots XA_kA_0 X A_1 \dots XA_{j-1}.
		\]
	\end{fact}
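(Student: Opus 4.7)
The plan is to verify both formulas by directly differentiating $F$ in an arbitrary direction, then summing against an orthonormal basis to get the divergence and applying cyclicity of trace to read off the gradient. The only substantive step is one algebraic identity about the basis $\mathcal{B}$ of $(\M_N)_{\sa}$, which is needed for the divergence; the gradient follows almost immediately once the directional derivative is in hand.

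Concretely, I would first compute the directional derivative in $H \in (\M_N)_{\sa}$ via the Leibniz rule:
\[
\partial_H F(X) = \sum_{j=1}^k U_j(X)\, H\, V_j(X),
\]
where $U_j(X) := A_0 X A_1 \cdots X A_{j-1}$ and $V_j(X) := A_j X A_{j+1} \cdots X A_k$. For the gradient, applying $\tr_N$ and using cyclicity gives $\partial_H \phi(X) = \sum_j \tr_N\bigl(H\, V_j(X)\, U_j(X)\bigr)$, and matching against $\partial_H \phi = \tr_N(H\, \nabla \phi)$ immediately reads off $\nabla \phi(X) = \sum_j V_j(X)\, U_j(X)$, which is the second displayed formula in the statement.

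For the divergence, since $F$ is $\M_N$-valued, I take $\Div F(X) := \sum_{b \in \mathcal{B}} \ip{b, \partial_b F(X)} = \sum_{b \in \mathcal{B}} \tr_N\bigl(b\, \partial_b F(X)\bigr)$ for the orthonormal basis $\mathcal{B}$ of $(\M_N)_{\sa}$ fixed in the preliminaries, so that
\[
\Div F(X) = \sum_{j=1}^k \sum_{b \in \mathcal{B}} \tr_N\bigl(b\, U_j(X)\, b\, V_j(X)\bigr).
\]
The main obstacle is the completeness / ``Casimir'' identity
\[
\sum_{b \in \mathcal{B}} b\, U\, b = N^2 \tr_N(U)\, I_N \qquad (U \in \M_N),
\]
which I would establish by an entrywise computation in the explicit basis $\mathcal{B}$: the diagonal piece contributes $N \sum_j U_{jj} E_{jj}$, while expanding the real and imaginary off-diagonal pieces causes the $U_{jk}$-terms to cancel and forces the diagonal $U_{jj}$, $U_{kk}$-terms to add to $N \sum_{j \neq k} U_{kk} E_{jj}$; summing gives $N \Tr(U)\, I_N = N^2 \tr_N(U)\, I_N$. (Alternatively, invariance of $\sum_b b \otimes b$ under orthogonal change of basis makes this independent of which orthonormal basis is chosen.) Substituting back and using cyclicity,
\[
\sum_{b \in \mathcal{B}} \tr_N\bigl(b\, U_j\, b\, V_j\bigr) = \tr_N\Bigl(\bigl(\textstyle\sum_{b \in \mathcal{B}} b\, U_j\, b\bigr) V_j\Bigr) = N^2 \tr_N(U_j)\, \tr_N(V_j),
\]
and dividing by $N^2$ gives the divergence formula as claimed.
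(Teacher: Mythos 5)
Your proof is correct and complete. The paper does not actually prove this Fact --- it is stated as well known with citations to Rains, C\'ebron, Driver--Hall--Kemp, Guionnet, and Jekel's thesis --- so there is no in-paper argument to compare against; your route (decompose $\partial_H F = \sum_j U_j H V_j$, read off $\nabla\phi$ from cyclicity, and reduce the divergence to the completeness identity $\sum_{b\in\mathcal{B}} b\,U\,b = N^2\tr_N(U)\,I_N$) is the standard one. Your entrywise verification of the completeness identity is right: the diagonal basis elements contribute $N\sum_j U_{jj}E_{jj}$, and for $j<k$ the real and imaginary off-diagonal generators contribute $\tfrac{N}{2}\bigl(U_{kj}E_{jk}+U_{kk}E_{jj}+U_{jj}E_{kk}+U_{jk}E_{kj}\bigr)$ and $\tfrac{N}{2}\bigl(-U_{kj}E_{jk}+U_{kk}E_{jj}+U_{jj}E_{kk}-U_{jk}E_{kj}\bigr)$ respectively, whose sum is $N(U_{kk}E_{jj}+U_{jj}E_{kk})$, and totalling gives $N\,\mathrm{Tr}(U)\,I_N=N^2\tr_N(U)\,I_N$. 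One small remark: the indices in the statement's displayed formulas, $A_jXA_{j-1}\cdots XA_k$, are a typo for $A_jXA_{j+1}\cdots XA_k$; your formulas $U_j=A_0XA_1\cdots XA_{j-1}$ and $V_j=A_jXA_{j+1}\cdots XA_k$ are the intended ones.
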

	
	Here $\Div(F(X))$ corresponds to $\tr_N \otimes \tr_N[\partial_X F]$ where $\partial_X$ is Voiculescu's free difference quotient.  Moreover, $\nabla \phi(X) = D_X^\circ F(X)$, where $D_X^\circ$ is Voiculescu's cyclic derivative as in Notation \ref{not: cyclic derivative}.  (However, here we only have one variable rather than variables indexed by a vector space.)
	
	\subsection{Free semicircular families}
	
	\begin{definition}
		A tracial von Neumann algebra is a pair $(\cM,\tau)$ where $\cM \subseteq B(H)$ that contains $1$ and is closed under addition, multiplication, adjoints, and limits in the weak operator topology; and $\tau: \cM \to \C$ is a linear functional satisfying for $x, y \in \cM$,
		\begin{enumerate}[(1)]
			\item $\tau(1) = 1$,
			\item $\tau(x^*x) \geq 0$,
			\item $\tau(x^*x) = 0$ if and only if $x = 0$,
			\item $\tau(xy) = \tau(yx)$,
			\item $\tau$ is continuous on the unit ball with respect to the weak operator topology.
		\end{enumerate}
	\end{definition}
	
	Although von Neumann algebras are the natural non-commutative analog of measure spaces, and hence a natural setting for free probability, the deeper theory of von Neumann algebras will play little role in this paper.  The main properties we will use here are that $\cM$ is a $*$-algebra and $\tau: \cM \to \C$ is a linear functional that $\tau(1) = 1$ and $\tau(xy) = \tau(yx)$ (which leads to cyclic symmetry of the traces of monomials).
	
	We next summarize some basic facts about free products.  For background, see e.g.\ \cite{voiculescu1992free-random,AGZ2009,MingoSpeicher}.
	
	\begin{definition}
		Let $(\cM,\tau)$ be a tracial von Neumann algebra.  Let $\cA_1$, \dots, $\cA_m$ be $*$-subalgebras.  We say that $\cA_1$, \dots, $\cA_m$ are \emph{freely independent} if whenever $k \geq 1$ and $i_1$, \dots, $i_k$ are indices in $[m]$ with $i_1 \neq i_2 \neq \dots \neq i_k$, whenever $a_j \in \cA_{i_j}$ for $j = 1$, \dots, $k$ with $\tau(a_j) = 0$, then $\tau(a_1 \dots a_k) = 0$.
		
		Moreover, we say that families $(x_{1,1},\dots,x_{1,n_1})$, \dots, $(x_{m,1},\dots,x_{m,n_m})$ in $M$ are freely independent if the respective $*$-algebras they generate are freely independent.
	\end{definition}
	
	\begin{fact}
		Given tracial von Neumann algebras $(\cM_1,\tau_1)$, \dots, $(\cM_m,\tau_m)$, there exists a tracial von Neumann algebra $(\cM,\tau)$, called their \emph{free product}, and trace-preserving inclusions $\iota_j: \cM_j \to \cM$ such that $\iota_1(\cM_1)$, \dots, $\iota_m(\cM_m)$ are freely independent and generate $\cM$.
	\end{fact}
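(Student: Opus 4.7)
The plan is to carry out Voiculescu's free product of Hilbert spaces construction and then define $\cM$ as the von Neumann algebra generated by left regular representations. First, I apply the GNS construction to each $(\cM_j, \tau_j)$ to obtain Hilbert spaces $\cH_j := L^2(\cM_j, \tau_j)$ with unit cyclic vectors $\xi_j$ representing $1 \in \cM_j$, and decompose $\cH_j = \C\xi_j \oplus \cH_j^\circ$. Then I form
\[
\cH = \C\Omega \oplus \bigoplus_{k \geq 1} \bigoplus_{\substack{i_1, \dots, i_k \in [m] \\ i_1 \neq i_2 \neq \cdots \neq i_k}} \cH_{i_1}^\circ \otimes \cdots \otimes \cH_{i_k}^\circ,
\]
with distinguished unit vector $\Omega$. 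For each $j$ I construct the natural unitary $V_j: \cH \to \cH_j \otimes \cH(j)$, where $\cH(j)$ is the subspace of $\cH$ spanned by $\Omega$ together with pure tensors whose leading index differs from $j$: the map sends $\Omega \mapsto \xi_j \otimes \Omega$, sends a pure tensor with leading index equal to $j$ to the obvious factorization, and sends one with leading index not $j$ to $\xi_j$ tensored with the whole word. I then set $\iota_j(a) := V_j^*(L_a \otimes I)V_j$, where $L_a$ is left multiplication by $a \in \cM_j$ on $\cH_j$; each $\iota_j$ is a normal unital $*$-homomorphism. Finally I define $\cM := \bigl(\bigcup_{j=1}^m \iota_j(\cM_j)\bigr)'' \subseteq B(\cH)$ and $\tau(x) := \langle x\Omega, \Omega\rangle$.

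The routine properties now fall out. $\tau(1) = 1$ and $\tau(x^*x) = \|x\Omega\|^2 \geq 0$ are immediate. The identity $V_j \Omega = \xi_j \otimes \Omega$ gives $\tau \circ \iota_j = \tau_j$, which combined with faithfulness of each $\tau_j$ forces the $\iota_j$ to be injective. For free independence, given $a_\ell \in \cM_{i_\ell}$ with $\tau_{i_\ell}(a_\ell) = 0$ and $i_1 \neq \cdots \neq i_k$, an induction on $k$ using the explicit form of the $V_{i_\ell}$ shows that $\iota_{i_1}(a_1) \cdots \iota_{i_k}(a_k)\Omega$ lies in the summand $\cH_{i_1}^\circ \otimes \cdots \otimes \cH_{i_k}^\circ$, which is orthogonal to $\Omega$, so its $\tau$-value vanishes. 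Generation of $\cM$ by the $\iota_j(\cM_j)$ holds by construction.

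The main obstacle is verifying that $\tau$ is tracial and faithful. I would construct analogous right-action representations $\rho_j$ using right multiplication on $\cH_j$ and a mirror-image unitary, and verify $[\iota_i(a), \rho_j(b)] = 0$ for all $i, j$ and $a \in \cM_i$, $b \in \cM_j$. Next I build an antiunitary involution $J$ on $\cH$ that fixes $\Omega$, defined by reversing the tensor word and applying the GNS modular conjugation $J_{i_\ell}$ of $(\cM_{i_\ell}, \tau_{i_\ell})$ on each factor, and check that $J \iota_j(a) J = \rho_j(a^*)$ for $a \in \cM_j$. Writing $\cA_0$ for the $*$-algebra generated by the $\iota_j(\cM_j)$, this intertwining yields $\|x\Omega\| = \|x^*\Omega\|$ for every $x \in \cA_0$; a standard Cauchy-Schwarz argument using the commutation of left and right actions then gives $\tau(xy) = \tau(yx)$ for $x, y \in \cA_0$, and the identity extends to all of $\cM$ via Kaplansky density together with SOT-continuity of multiplication by a fixed bounded operator and WOT-continuity of the vector state $\tau$ on the unit ball. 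Faithfulness of $\tau$ on $\cM$ follows from cyclicity of $\Omega$ for $\cA_0$ (hence for $\cM$) together with the standard fact that a cyclic tracial vector is automatically separating, which is established via Cauchy-Schwarz applied to $\tau(xz) = 0$ for all $z$.
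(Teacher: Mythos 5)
The paper states this as a Fact without proof, since it is a standard result from Voiculescu's theory (the free product of tracial von Neumann algebras), so there is no in-paper argument to compare against. Your proposal is a correct sketch of the canonical construction: build the free product Hilbert space out of the reduced GNS spaces $\cH_j^\circ$, represent each $\cM_j$ via the unitaries $V_j$ conjugating left multiplication, take the double commutant, and read off $\tau$ as the vacuum vector state. The centered-word computation for free independence, the use of mirror right actions and the tensor-reversing antiunitary $J$ to get $\|x\Omega\| = \|x^*\Omega\|$ (hence traciality by polarization), and faithfulness via separating/cyclic duality are all the standard ingredients and are assembled correctly.

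Two small points of precision you may wish to tighten in a full write-up. First, the step from $\|x\Omega\| = \|x^*\Omega\|$ to $\tau(xy) = \tau(yx)$ is really a polarization identity rather than Cauchy--Schwarz: expanding $\tau\bigl((a+\lambda b)^*(a+\lambda b)\bigr) = \tau\bigl((a+\lambda b)(a+\lambda b)^*\bigr)$ for $\lambda \in \{1,i\}$ yields $\tau(a^*b) = \tau(ba^*)$ on $\cA_0$. Second, for the extension of traciality from $\cA_0$ to $\cM$ one should note that $\tau$ is normal (being a vector state), and then Kaplansky density plus separate SOT-continuity of multiplication on bounded sets does the job as you indicate; faithfulness on all of $\cM$ then follows from the argument that $x\Omega = 0$ and $\tau$ tracial force $\langle x\,y\Omega, z\Omega\rangle = \tau(z^*xy) = \tau(yz^*x) = 0$ for all $y,z$, with the last equality by Cauchy--Schwarz against $\tau(x^*x) = 0$.
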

	
	Another fact about free independence is the ``associative'' property.
	
	\begin{fact}
		If $\cA_1$, \dots, $\cA_m$ are freely independent in $(M,\tau)$, then $\cA_m$ is freely independent of the algebra generated by $\cA_1$, \dots, $\cA_{m-1}$.
	\end{fact}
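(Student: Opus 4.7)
The plan is to verify the definition of free independence directly for the pair consisting of $\cA_m$ and $\cB$, where $\cB$ denotes the $*$-subalgebra of $\cM$ generated by $\cA_1,\dots,\cA_{m-1}$. That is, given centered elements $a_1,\dots,a_r \in \cA_m$ and $b_1,\dots,b_r \in \cB$, I want to show that any alternating product in the four possible patterns ($a_1 b_1 a_2 \cdots$, $b_1 a_1 b_2 \cdots$, etc.) has trace zero.

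The key reduction step is a structural lemma: every $b \in \cB$ with $\tau(b) = 0$ can be written as a finite linear combination of \emph{reduced alternating words} $\hat c_1 \hat c_2 \cdots \hat c_n$ with $n \geq 1$, $\hat c_j \in \cA_{k_j} \cap \ker \tau$, indices $k_j \in \{1,\dots,m-1\}$, and $k_1 \neq k_2 \neq \cdots \neq k_n$. I would prove this by induction on the length of a generic product $c_1 \cdots c_n$ with $c_j \in \cA_{k_j}$, $k_j \in \{1,\dots,m-1\}$. If two consecutive indices agree, merge the factors using that each $\cA_{k_j}$ is a subalgebra and recurse on the shorter product; otherwise, expand $c_n = (c_n - \tau(c_n) \cdot 1) + \tau(c_n) \cdot 1$, so that $c_1 \cdots c_n = c_1 \cdots c_{n-1} \hat c_n + \tau(c_n) \, c_1 \cdots c_{n-1}$, and apply the inductive hypothesis to $c_1 \cdots c_{n-1}$ on each summand. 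The scalar terms contribute only to the constant $\tau(b) \cdot 1$, which vanishes under the hypothesis $\tau(b) = 0$.

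Once this expansion is available, the conclusion is immediate: substitute the reduced-alternating expansion for each $b_j$ in a product such as $a_1 b_1 a_2 b_2 \cdots$. The result is a linear combination of words
\[
a_1 \, \hat c_1^{(1)} \cdots \hat c_{n_1}^{(1)} \, a_2 \, \hat c_1^{(2)} \cdots \hat c_{n_2}^{(2)} \cdots
\]
which are themselves alternating with respect to the full list $\cA_1,\dots,\cA_m$: between successive $a_j$'s the indices lie in $\{1,\dots,m-1\}$ and so never equal $m$, and within each block coming from $b_j$ the indices already alternate by construction. Every factor is centered, so free independence of $\cA_1,\dots,\cA_m$ forces the trace to vanish. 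The same argument handles the other three alternation patterns by inserting or omitting the outer $a$- or $b$-factors.

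The main obstacle is the bookkeeping in the structural lemma: when one writes $c_j = \hat c_j + \tau(c_j) \cdot 1$ and the scalar term collapses two non-adjacent factors into neighbors in the same subalgebra, the alternation property is temporarily destroyed and has to be restored by further merging and centering. This is routine but requires a suitably phrased induction, for instance on the total word length together with a secondary count of repeated-index adjacencies. Once this is established, no additional analytic input is needed; the result is a purely algebraic consequence of the definition of free independence.
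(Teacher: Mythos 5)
The paper states this Fact without proof, citing it as standard background on free independence; there is no argument in the text to compare against. Your proof is the standard one, and it is correct in outline. Two points are worth being explicit about. First, the identification of the scalar part of the expansion with $\tau(b)\cdot 1$ is not purely formal: after writing $b = \lambda\cdot 1 + \sum_k \gamma_k w_k$ with each $w_k$ a reduced alternating word in centered elements of $\cA_1,\dots,\cA_{m-1}$, you must apply $\tau$ and use the free independence of $\cA_1,\dots,\cA_{m-1}$ (which follows from that of $\cA_1,\dots,\cA_m$) to conclude $\tau(w_k)=0$ and hence $\lambda=\tau(b)$. Second, the induction you sketch does close up cleanly: expand each $c_j = \hat c_j + \tau(c_j)\cdot 1$ and distribute; any resulting subword has strictly fewer letters whenever a scalar is pulled out, and any newly adjacent pair in the same $\cA_k$ can be merged into a single factor, again strictly decreasing the length, so induction on word length terminates and yields the claimed expansion. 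With those clarifications the argument is complete and is precisely the textbook proof of this fact.
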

	
	\begin{definition}
		If $(\cM,\tau)$ is a tracial von Neumann algebra and we say that $X \in M$ is \emph{standard semicircular variable} if $X$ is self-adjoint and
		\[
		\tau(p(X)) = \frac{1}{2 \pi} \int_{-2}^2 p(x) \sqrt{4 - x^2}\,dx
		\]
		for every polynomial $p$.
	\end{definition}
	
	The semicircular random variable serves as an analog in free probability theory of the standard normal random variable, and as a model for the large-$N$ behavior of the $N \times N$ GUE matrix $X^{(N)}$ via Wigner's semicircle law.  We will use a free analog of the integration by parts formula that holds for Gaussian random variables  The next fact follows from \cite[Proposition 3.6 and Proposition 3.8]{VoiculescuFE5}, and a more self-contained proof is found in \cite[\S 3.5]{ShlyakhtenkoParkCity}.
	
	\begin{fact} \label{fact: free IBP}
		Let $(\cM,\tau)$ be a tracial von Neumann algebra.  Let $X$ be a standard semicircular random variable freely independent from the $*$-subalgebra $\cA$.  Then for $A_0, \dots, A_k \in \cA$, we have
		\[
		\tau(X \, A_0 X A_1 \dots X A_k) = \sum_{j=1}^k \tau(A_0 X A_1 \dots XA_{j-1}) \tau(A_jXA_{j-1} \dots XA_k).
		\]
	\end{fact}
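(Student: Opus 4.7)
The plan is to prove the identity via the moment--cumulant formula of free probability, together with two structural facts that I would invoke as inputs: (i) the semicircular variable $X$ has $\kappa_2(X,X) = 1$ and all higher (and the first) free cumulants vanishing, and (ii) because $X$ is freely independent from $\cA$, every mixed free cumulant, i.e.\ one whose entries include both copies of $X$ and elements of $\cA$, vanishes.

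I would expand $\tau(X A_0 X A_1 \cdots X A_k)$ as $\sum_{\pi \in NC(2k+2)} \kappa_\pi[X, A_0, X, A_1, \ldots, X, A_k]$ and then use the two facts to restrict the sum to those $\pi$ whose blocks lie entirely in the $X$-positions $\{1,3,\ldots,2k+1\}$ or in the $A$-positions $\{2,4,\ldots,2k+2\}$, with every $X$-block being a pair. The block of $\pi$ containing position $1$ must then be a pair $\{1,\, 2j+1\}$ for some $j \in \{1,\ldots,k\}$, and by the non-crossing condition the remaining blocks split into two independent non-crossing partitions: one of the inside indices $\{2,3,\ldots,2j\}$ and one of the outside indices $\{2j+2,\ldots,2k+2\}$. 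Since the constraints on each half are of the same form as the original, running the moment--cumulant formula backwards on each half produces $\tau(A_0 X A_1 \cdots X A_{j-1})$ and $\tau(A_j X A_{j+1} \cdots X A_k)$ respectively. Summing over $j \in \{1,\ldots,k\}$ and using $\kappa_2(X,X) = 1$ gives the stated identity.

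The hard part is really nothing beyond the two input facts, both classical: (i) follows from the identification of the semicircular moments with the Catalan numbers (equivalently, with the enumeration of non-crossing pair partitions) combined with moment--cumulant inversion, while (ii) is Speicher's characterization of freeness via vanishing of mixed cumulants. Rather than reprove these I would cite the references already assembled in the paper, notably \cite[\S 3.5]{ShlyakhtenkoParkCity}, where essentially this argument is carried out. If a genuinely self-contained derivation were desired, I would instead induct on $k$: use $\tau(X) = 0$ together with the defining freeness condition to expand the left-hand side as a sum over alternating centered products, peel off the leftmost $X$ by pairing it with one of the other $X$'s, and match the resulting expression term-by-term against the right-hand side. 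That alternative rediscovers the same combinatorics without naming cumulants, at the cost of heavier bookkeeping, so the cumulant route is preferred.
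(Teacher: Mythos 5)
Your cumulant proof is correct and is the standard derivation. The paper does not supply its own proof of this Fact --- it cites Voiculescu and the Park City lecture notes --- and the moment--cumulant argument you lay out (only $\kappa_2(X,X)=1$ survives among the $X$-cumulants, mixed cumulants vanish by freeness, the block through position $1$ splits the non-crossing partition into two independent halves which reassemble via the moment--cumulant formula into the two traces) is precisely the argument found in those references, so there is no substantive divergence in approach. One incidental observation: your computation correctly yields $\tau(A_j X A_{j+1} \cdots X A_k)$ for the second factor, whereas the paper's display writes $A_{j-1}$ in that slot (the same slip appears in Fact~\ref{fact: polynomial divergence}); this is evidently a typo for $A_{j+1}$, and your derivation confirms the intended formula.
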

	
	\section{Genus expansion for mixtures of GUE and semicirculars} \label{sec: genus expansion}
	
	\subsection{Permutations}
	
	Although the genus expansion is often stated in terms of maps or partitions, we find it technically convenient to use permutations; the relationship between partitions and permutations is explained for instance in \cite[\S 1.8]{MingoSpeicher}.
	
	\begin{notation}[Permutations, cycles, support]
		For a finite set $S$, $\Perm(S)$ will denote the finite permutations of the set $S$.  For $\sigma \in \Perm(S)$, we write
		\[
		\supp(\sigma) := \{k \in S: \sigma(k) \neq k\}
		\]
		Every permutation has a unique decomposition into a product of disjoint cycles.  A \emph{nontrivial cycle} is any cycle of length at least two (which can be represented as a permutation of $S$), and a \emph{trivial cycle} is a fixed point, which we represent simply as a point of $S$.  Let
		\[
		\Cyc(\sigma) := \{ \gamma \in \Perm(S): \gamma \text{ is a nontrivial cycle of } \sigma \} \sqcup \{k \in S: \sigma(k) = k\} \subseteq \Perm(S) \sqcup S.
		\]
		Thus, $\# \Cyc(\sigma)$ will denote the number of cycles of $\sigma$.  If $S' \subseteq S$ is a union of some cycles of $\sigma$, then we denote by $\sigma|_{S'}$ the permutation of $S'$ obtained by restricting $\sigma$.
	\end{notation}

	\begin{definition}[Labelings and compatibility] \label{def:labeling}
		Given a set $S$ and another set $I$, an \emph{$I$-labeling of $S$} is a function $\ell: S \to I$.   For an $I$-labeling $\ell$ of $S$ and $\sigma \in \Perm(S)$, we say that  \emph{$\sigma$ and $\ell$ are compatible} if $\ell \circ \sigma = \ell$.
	\end{definition}
	
	\begin{notation}[Pair permutations]
		For $S \subseteq \N$, let
		\[
		\Perm_2(S) := \{\sigma \in \Perm(S): \sigma^2 = \id, \supp(\sigma) = S  \}.
		\]
		Moreover, we denote by $\Trans(S)$ the set of transpositions in $\Perm(S)$.
	\end{notation}
	
	\begin{definition}[Bridges and crossings]
		Let $\sigma \in \Perm(S)$.
		\begin{itemize}
			\item A \emph{$\sigma$-bridge} is a transposition $\tau \in \Perm(S)$ which swaps two indices $k$ and $k'$ which are in different cycles (or fixed points) of $\sigma$.
			\item A \emph{$\sigma$-cross} is a pair of disjoint transpositions $\tau$, $\tau' \in \Perm(S)$ such that the following conditions hold: There is some $\gamma \in \Cyc(\sigma)$ such that $\supp(\tau)$ and $\supp(\tau') \subseteq \supp(\gamma)$.  Moreover, writing $\gamma$ in cycle notation as $(k_1 \dots k_m)$, there are indices $1 \leq a < b < c < d \leq m$ such that $\tau \tau'$ swaps $a$ with $c$ and $b$ with $d$.
		\end{itemize}
	\end{definition}
	
	\begin{definition}[$\sigma$-noncrossing partitions] \label{def: noncrossing}
		Let $S' \subseteq S$ be finite subsets of $\N$.  Let $\sigma \in \Perm(S)$.  We say that $\pi \in \Perm_2(S')$ is \emph{$\sigma$-noncrossing} if $\Cyc(\pi)$ does not contain any $\sigma$-bridges or $\sigma$-crosses.
	\end{definition}
	
	\begin{figure}
		
		\begin{center}
			
			\begin{tikzpicture}
				
				\draw (-3,0) circle (1.5);
				\node at (-1.8,0) {$1$};
				\node at (-3,1.2) {$2$};
				\node at (-4.0,0.7) {$3$};
				\node at (-4.0,-0.7) {$4$};
				\node at (-3,-1.2) {$5$};
				\draw (3,0) circle (1);
				\node at (3.2,0.7) {$6$};
				\node at (2.3,0) {$7$};
				\node at (3.2,-0.7) {$8$};
				
				\draw[dotted] (-1.5,0) -- (2,0);
				\draw[dotted] (3.3,0.95) arc (135:-135:1.35);
				\draw[dotted] (-3,1.5) arc (30:275:1.6);
				\draw[dotted] (-3,-1.5) arc (-30:-275:1.6);
				
				
			\end{tikzpicture}
			
		\end{center}
		
		\caption{Circles (solid) constructed from a permutation $\sigma = (12345)(678)$ and connecting curves (dotted) constructed from a permutation $\pi = (17)(24)(35)(68)$.  Here $(24)$ and $(35)$ are a $\sigma$-cross and $(17)$ is a $\sigma$-bridge.} \label{fig: bridge and cross}
		
	\end{figure}
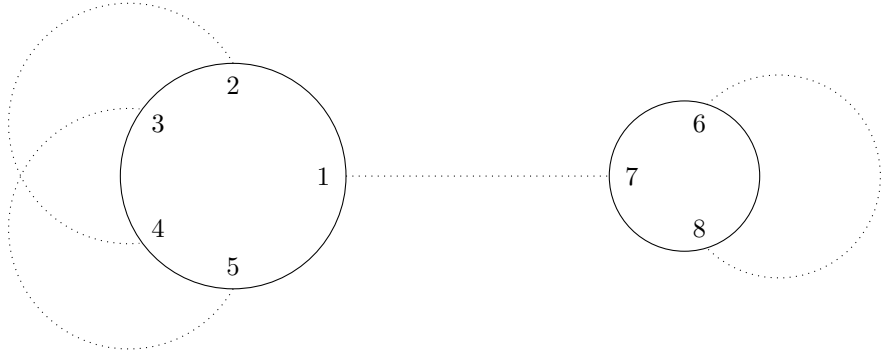
	
	\begin{remark}
		The diagrammatic intuition of Definition \ref{def: noncrossing} is as follows.  For each cycle of $\sigma$, imagine a circle with the indices of the cycle written counterclockwise around the circumference.  The cycles of $\pi$ are pairs or singletons.  For each pair in $\pi$, connect the corresponding indices on the circumferences of the circles by a curve that remains on the exterior of the circles.  A $\sigma$-bridge is represented by a curve that connects two different circles.  A $\sigma$-cross consists of two curves that connect indices on the same circle, and which cannot be drawn in the exterior of the circle without crossing each other.  See Figure \ref{fig: bridge and cross}.  Compare also \cite[\S 2.5]{GMS2006}.
	\end{remark}
	
	\begin{definition}
		Given permutations $\pi, \sigma \in \Perm(S)$, we define the permutation $\pi \bowtie \sigma \in \Perm(\{0,1\} \times S)$ by
		\begin{align*}
			(\pi \bowtie \sigma)(0,k) &= (1,\sigma(k)), \\
			(\pi \bowtie \sigma)(1,k) &= (0,\pi(k)).
		\end{align*}
	\end{definition}
	
	\begin{definition}[Combinatorial genus] \label{def: genus}
		For two permutations $\pi$ and $\sigma \in \Perm(S)$, we define the combinatorial genus $g(\sigma,\pi)$ by
		\[
		2g(\sigma,\pi) = \# S + \# \Cyc(\sigma) - \# \Cyc(\pi) - \# \Cyc(\pi \bowtie \sigma).
		\]
	\end{definition}
	
	\begin{observation}
		For any $\sigma \in \Perm(S)$, we have $g(\sigma,\id) = 0$.
	\end{observation}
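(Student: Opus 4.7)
The plan is to compute the two unknown quantities $\# \Cyc(\id)$ and $\# \Cyc(\id \bowtie \sigma)$ appearing in Definition \ref{def: genus} directly from the definitions, and then substitute into the formula. Since $\id$ has every element of $S$ as a fixed point, by the convention that fixed points are counted as trivial cycles of $\Cyc$, we immediately get $\# \Cyc(\id) = \# S$.

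The next step is to analyze the orbit structure of $\id \bowtie \sigma$ on $\{0,1\} \times S$. From the definition, $(\id \bowtie \sigma)(0,k) = (1,\sigma(k))$ and $(\id \bowtie \sigma)(1,k) = (0,k)$, so iterating starting from $(0,k_1)$ along a cycle $(k_1\,k_2\,\dots\,k_m)$ of $\sigma$ traces out
\[
(0,k_1) \mapsto (1,k_2) \mapsto (0,k_2) \mapsto (1,k_3) \mapsto (0,k_3) \mapsto \dots \mapsto (1,k_1) \mapsto (0,k_1),
\]
a single cycle of length $2m$. This works uniformly for nontrivial cycles and for fixed points of $\sigma$ (the case $m=1$ gives the $2$-cycle $((0,k),(1,k))$, which is a nontrivial cycle of $\id \bowtie \sigma$ and still contributes $1$ to $\# \Cyc(\id \bowtie \sigma)$). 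Hence each cycle of $\sigma$ lifts to exactly one cycle of $\id \bowtie \sigma$, giving $\# \Cyc(\id \bowtie \sigma) = \# \Cyc(\sigma)$.

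Substituting into Definition \ref{def: genus},
\[
2g(\sigma,\id) = \# S + \# \Cyc(\sigma) - \# \Cyc(\id) - \# \Cyc(\id \bowtie \sigma) = \# S + \# \Cyc(\sigma) - \# S - \# \Cyc(\sigma) = 0.
\]
There is no real obstacle here; the only point requiring a moment's care is the uniform treatment of trivial and nontrivial cycles of $\sigma$ in the orbit count, so I would present the cycle-lifting calculation in a single sentence covering both cases simultaneously.
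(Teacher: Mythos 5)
Your argument is correct and follows the same route as the paper: compute $\# \Cyc(\id) = \# S$, show each cycle of $\sigma$ lifts to a single cycle of $\id \bowtie \sigma$ so $\# \Cyc(\id \bowtie \sigma) = \# \Cyc(\sigma)$, and substitute. Your explicit orbit trace $(0,k_1)\mapsto(1,k_2)\mapsto(0,k_2)\mapsto\cdots$ is if anything slightly more careful than the paper's (which lists the orbit in an order that doesn't literally match the action, though the count is the same), and your explicit handling of the fixed-point case is a nice touch.
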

	
	\begin{proof}
		Note $\# \Cyc(\id) = \# S$.  Moreover, if $j_1, \dots, j_m$ is a cycle of $\sigma$, then $(0,j_1)$, $(1,j_1)$, \dots, $(0,j_m)$, $(1,j_m)$ is a cycle of $\id \bowtie \sigma$.  Thus, $\# \Cyc(\sigma \bowtie \id) = \# \Cyc(\sigma)$.
	\end{proof}
	
	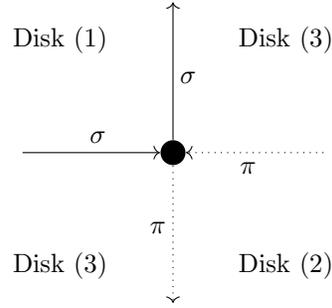
\begin{figure}
		
		\begin{center}
			
			\begin{tikzpicture}
				
				\node[circle,fill] (V) at (0,0) {};
				\draw[->] (-2,0) to (V);
				\draw[->] (V) to (0,2);
				\draw[->,dotted] (2,0) to (V);
				\draw[->,dotted] (V) to  (0,-2);
				\node at (-1,0.2) {$\sigma$};
				\node at (0.2,1) {$\sigma$};
				\node at (1,-0.2) {$\pi$};
				\node at (-0.2,-1) {$\pi$};
				\node at (-1.5,1.5) {Disk (1)};
				\node at (1.5,1.5) {Disk (3)};
				\node at (1.5,-1.5) {Disk (2)};
				\node at (-1.5,-1.5) {Disk (3)};
				
			\end{tikzpicture}
			
		\end{center}
		
		\caption{Local picture of the construction of a surface as in Remark \ref{rem: surface genus}.  The solid edges come from the permutation $\sigma$ while the dotted edges come from the permutation $\pi$.  On each corner, we glue disks of types (1), (2), (3) as described in the remark.} \label{fig: surface gluing} 
		
	\end{figure}
	
	\begin{remark} \label{rem: surface genus}
		This is the genus of a CW complex constructed as follows.  Let the set of vertices be $S$.  For each $j$, add an edge from $j$ to $\sigma(j)$ and an edge from $j$ to $\pi(j)$.  Here note a transposition produces two edges (for instance, in Figure \ref{fig: bridge and cross}, each dotted line would be doubled to form two edges).  Next, glue on disks of four types:
		\begin{enumerate}[(1)]
			\item For each cycle of $\sigma$, the corresponding edges form a circle to which one glues the circumference of a disk.  The number of such disks is $\# \Cyc(\sigma)$.
			\item For each cycle of $\pi$, the corresponding edges form a circle to which one glues the circumference of a disk.  The number of such disks is $\# \Cyc(\pi)$.
			\item Next, form circles from alternating edges: first an edge from $\sigma$, then an edge from $\pi$, then an edge from $\sigma$, then an edge from $\pi$, ending when we reach the same point via an edge from $\pi$.  These circles correspond to the cycles of $\pi \bowtie \sigma$.  Here we glue $\# \Cyc(\pi \bowtie \sigma)$ many disks.
		\end{enumerate}
		Because each edge has two circles glued to it and each vertex has four circles glued to it, we see that this forms a surface $M$; see Figure \ref{fig: surface gluing}.  The Euler characteristic can be computed as the number of vertices $\# S$ minus the number of edges $2 \# S$ plus the number of faces $\# \Cyc(\sigma) + \# \Cyc(\pi) + \# \Cyc(\pi \bowtie \sigma)$.  Recall that for a connected surface, the Euler characteristic is $2(1 - g)$ where $g$ is the genus.  In our situation, the surface may have multiple components; in fact, if $\pi$ is $\sigma$-noncrossing, then the number of components will be $\# \Cyc(\sigma)$; thus, we want to work with $\# \Cyc(\sigma) - g$ instead of $1 - g$.  The genus $g(\sigma,\pi)$ in Definition \ref{def: genus} satisfies
		\[
		2(\# \Cyc(\sigma) - g(\sigma,\pi)) = -\# S + \# \Cyc(\sigma) + \# \Cyc(\pi) + \# \Cyc(\pi \bowtie \sigma) = \chi(M).
		\]
		An alternative formulation is as follows:  Let $M_1$, \dots, $M_k$ be the connected components of $M$.  Note $\chi(M) = \sum_{j=1}^k \chi(M_j)$.  Thus,
		\[
		g(\sigma,\pi) = \# \Cyc(\sigma) - \frac{1}{2} \sum_{j=1}^k \chi(M_j) = \sum_{j=1}^k g(M_j) + (\# \Cyc(\sigma) - k),
		\]
		i.e.\ we add the genera of the components, then add the difference between $\# \Cyc(\sigma)$ and the total number of components.
	\end{remark}
	
	\begin{remark}[Maximum possible genus] \label{rem: max genus}
	Suppose that $\sigma$ is the $2m$-cycle $(12\dots (2m))$ and that $\pi \in \Perm_2([2m])$.  We claim that the maximum possible value of the genus $g(\sigma,\pi)$ is $\lfloor m/2 \rfloor$.  To see this, first note that since $\#S = 2m$ and $\# \Cyc(\sigma) = 1$ and $\# \Cyc(\pi) = m$, Definition \ref{def: genus} yields
	\[
	2g(\sigma,\pi) = 2m + 1 - m - \# \Cyc(\pi \bowtie \sigma) = m + 1 - \# \Cyc(\pi \bowtie \sigma).
	\]
	For the case where $m$ is even, it suffices to note that $\# \Cyc(\pi \bowtie \sigma) \geq 1$, so that $2g(\sigma,\pi) \leq m$.  Now suppose $m$ is odd.  The permutation $\pi \bowtie \sigma$ can be expressed as $(\pi \sqcup \sigma) \rho$ where $\pi \sqcup \sigma$ is the permutation that applies $\pi$ on $\{0\} \times [2m]$ and applies $\sigma$ on $\{1\} \times [2m]$, and $\rho$ is the permutation swapping $(0,j)$ and $(1,j)$ for $j = 1, \dots, 2m$.  Thus,
	\[
	\sgn(\pi \bowtie \sigma) = \sgn(\pi) \sgn(\sigma) \sgn(\rho) = (-1)^m (-1) 1 = (1)^{m+1}
	\]
	since $\rho$ has $2m$ transpositions, $\pi$ has $m$ transpositions, and $\sigma$ is a $2m$-cycle.  Therefore, if $m$ is odd, then $\pi \bowtie \sigma$ is an even permutation, and hence $\pi \bowtie \sigma$ cannot have only one cycle since a $4m$-cycle would be odd.  Hence, $\# \Cyc(\pi \bowtie \sigma) \geq 2$ which results in $2g(\pi,\sigma) \leq m - 1$.  So in either case $g(\pi,\sigma) \leq \lfloor m/2 \rfloor$.
	
	The value $g(\sigma,\pi) = \lfloor m/2 \rfloor$ is achieved by the permutation $\pi$ that swaps $j$ and $m + j$ for $j = 1, \dots, m$.  Indeed, in the even case, $\pi \bowtie \sigma$ has a single cycle
	\begin{align*}
	&(0,1) \mapsto (1,2) \mapsto (0,m+2) \mapsto (1,m+3) \mapsto  \dots \mapsto (0,m-1) \mapsto (1,m) \mapsto (0,2m) \mapsto (1,1) \mapsto \\
	 &(0,m+1) \mapsto (1,m+2) \mapsto (0,2) \mapsto (1,3) \mapsto \dots \mapsto (0,2m-1) \mapsto (1,2m) \mapsto (0,m) \mapsto (1,m+1) \mapsto \\
	 &(0,1),
	\end{align*}
	resulting in $2g(\sigma,\pi) = m$.  In the odd case, $\pi \bowtie \sigma$ has two cycles, namely
	\begin{align*}
		&(0,1) \mapsto (1,2) \mapsto (0,m+2) \mapsto (1,m+3) \mapsto  \dots \mapsto (0,m) \mapsto (1,m+1) \mapsto (0,1), \\
		&(0,m+1) \mapsto (1,m+2) \mapsto (0,2) \mapsto (1,3) \mapsto \dots \mapsto (0,2m) \mapsto (1,1) \mapsto (0,m+1),
	\end{align*}
	resulting in $2g(\sigma,\pi) = m - 1$.
	\end{remark}
	
	\subsection{Statement of the genus expansion}
	
	\begin{notation}
		Let $A$ be an algebra, let $S$ be a finite set, and let $\gamma \in \Perm(S)$ be a cycle.  Write $\gamma = (k_1 \dots k_m)$ in cycle notation with $k_1 = \min \supp(\gamma)$.  Given elements $(x_k)_{k \in S}$ of the algebra $A$, write
		\[
		\prod_{k \in \gamma} x_k := x_{k_1} \dots x_{k_m}.
		\]
		Moreover, if $\gamma$ is a single point $j \in S$ (this case will arise because the cycle decomposition of a permutation may include singletons), then $\prod_{k \in \gamma} x_k := x_{k_j}$.
	\end{notation}
	
	\begin{notation} \label{not:setup}
		Fix $N \in \N$.  Let $I$ and $J$ be index sets and consider the disjoint union $I \sqcup J$.  Let $(X_i^{(N)})_{i \in I}$ be a family of independent GUE matrices, which are random elements of $(\M_N)_{\sa}$.  Let $(X_j)_{j \in J}$ be a family of freely independent standard semicircular random variables and let $\cM$ be the tracial von Neumann algebra that they generate.  Let $\M_N * \cM$ be the free product.  We may view $X_i^{(N)}$ as a random element of $\M_N * \cM$, and similarly view $X_j$ as a deterministic element of $\M_N * \cM$.
	\end{notation}
	
	\begin{proposition} \label{prop:genusexpansion}
		With the setup of Notation \ref{not:setup}, let $S \subseteq \N$ be a finite set.  Let $\ell: S \to I \sqcup J$ be an $I \sqcup J$-labeling, and let $\sigma \in \Perm(S)$ Let
		\[
		Y_k = \begin{cases} X_{\ell(k)}^{(N)}, & \ell(k) \in I \\ X_{\ell(k)}, & \ell(k) \in J, \end{cases}
		\]
		and for $k \in S$, let $Z_k \in \M_N$ be a deterministic matrix.  Then
		\[
		\E \left[ \prod_{\gamma \in \Cyc(\sigma)} \tr_{\M_N * \cM} \left( \prod_{k\in \gamma} Y_kZ_k \right) \right] = \sum_{\substack{\pi \in \Perm_2(S) \\ \pi \text{\normalfont~and } \ell \text{\normalfont~compatible} \\ \pi|_{\ell^{-1}(J)} \sigma\text{\normalfont -noncrossing} }} \frac{1}{N^{2g(\sigma,\pi)}} \prod_{\gamma \in \Cyc(\pi \sigma)} \tr_N\left( \prod_{k \in \gamma} Z_k \right).
		\]
	\end{proposition}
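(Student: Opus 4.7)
The plan is to prove Proposition \ref{prop:genusexpansion} by strong induction on $|S|$. The base case $|S| = 0$ is immediate, with both sides reducing to $1$. For the inductive step, fix some $k_0 \in S$ and integrate by parts with respect to $Y_{k_0}$: if $\ell(k_0) = i \in I$, view the entire product of traces as a scalar polynomial in the single GUE matrix $X_i^{(N)}$ and apply Fact \ref{fact: IBP} together with the divergence computation of Fact \ref{fact: polynomial divergence}, extended to products of traces via the Leibniz rule; if $\ell(k_0) = j \in J$, apply Fact \ref{fact: free IBP} inside the cycle of $\sigma$ containing $k_0$, using that $X_j$ is freely independent from the other variables and from $\M_N$.

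In both cases, the IBP produces a sum over partners $k' \in S \setminus \{k_0\}$ with $\ell(k') = \ell(k_0)$. A direct bookkeeping shows the following. When $k'$ lies in the same cycle of $\sigma$ as $k_0$, that cycle is split into two, the orphaned matrices $Z_{k_0}$ and $Z_{k'}$ are absorbed cyclically into adjacent slots, and no factor of $N^{-2}$ appears; when $k'$ lies in a different cycle of $\sigma$ (which can only occur for $\ell(k_0) \in I$, since the free IBP acts within a single trace), those two cycles are merged into one and a factor of $N^{-2}$ is generated. The restriction to same-cycle pairings in the semicircular case matches precisely the condition that $\pi|_{\ell^{-1}(J)}$ admits no $\sigma$-bridges. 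After the pairing, the resulting expression is a product of traces over the reduced set $S' := S \setminus \{k_0, k'\}$ with a new permutation $\sigma'$ and an updated family of $Z$-matrices, exactly in the form to which the induction hypothesis applies.

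Applying the induction hypothesis to each reduced problem and comparing with the right-hand side for $(S, \sigma)$ reduces the proof to two combinatorial identities. First, every $\pi \in \Perm_2(S)$ satisfying the compatibility and $\sigma$-noncrossing conditions decomposes uniquely as the union of the transposition $(k_0, \pi(k_0))$ and a pair partition $\pi'$ of $S' = S \setminus \{k_0, \pi(k_0)\}$ satisfying the analogous conditions relative to $(S', \sigma')$; conversely, each such valid $(k', \pi')$ reassembles to a valid $\pi$, and the extra verification that the $\sigma$-noncrossing property of the semicircular part is preserved uses that removing a pair cannot introduce new crossings among the remaining pairs. Second, one needs the genus identity
\[
g(\sigma, \pi) = \epsilon(k_0, k', \sigma) + g(\sigma', \pi'),
\]
where $\epsilon = 0$ for same-cycle pairings and $\epsilon = 1$ for cross-cycle pairings. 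The latter is the main obstacle; it is best visualized through the surface picture of Remark \ref{rem: surface genus}, where removing the pair of $\pi$-edges attached to $k_0$ and $k'$ constitutes a surgery on the CW complex whose effect on the Euler characteristic exactly matches the power of $N$ produced by the IBP. Once these identities are in hand, summing over all choices of $(k_0, k')$ and invoking the induction hypothesis recovers the right-hand side for $(S, \sigma)$, completing the induction.
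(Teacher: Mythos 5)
Your proposal is correct and follows essentially the same line of attack as the paper: peel off one variable by Gaussian or free integration by parts, track the resulting permutation as $\sigma \take (k_0k')$, invoke the genus identity (the paper's Lemma~\ref{lem:genusupdate}) together with the bijection between pair partitions of $S$ containing $(k_0\,k')$ and those of $S'$, and apply the induction hypothesis. The only organizational difference is that you run a single strong induction on $|S|$ with a case split on $\ell(k_0)$, whereas the paper first settles the all-GUE case and then does a secondary induction on the number of semicircular letters; your version is a harmless streamlining, though the paper's separate treatment of the $Z$-matrix bookkeeping (Notation~\ref{not:updateZ}, Lemma~\ref{lem:updateZ}) and of the equivalence between $\sigma$-noncrossing for $\pi|_{\ell^{-1}(J)}$ and $\sigma'$-noncrossing for $\pi'|_{\ell^{-1}(J)\cap S'}$ would still need to be spelled out in full.
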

	
	We will give a proof by induction on the number of Gaussians and semicirculars.  For the inductive step, we will want to remove one transposition at a time from the permutation $\pi$ through integration by parts (compare \cite[\S 2.6]{GMS2006}).  To this end, we will describe in the next subsection the effect of removing one transposition on all the objects in the genus expansion formula. The argument also motivates and aids our combinatorial proof of Parraud's formula in the next section.
	
	\subsection{Operations on permutations}
	
	\begin{notation}
		Let $\sigma, \tau \in \Perm(S)$.  Define $\sigma \take \tau \in \Perm(S \setminus \supp(\tau))$ as follows.  Let $r(k)$ be the smallest index $\geq 1$ such that $(\tau \sigma)^{r(k)} \not \in \supp(\tau)$.  Such an $r(k)$ must exist because $k \not \in \supp(\tau)$ and $\tau \sigma$ has finite order.  Then define $(\sigma \take \tau)(k) = (\tau \sigma)^{r(k)}(k)$.
	\end{notation}
	
	\begin{remark}
		Intuitively, $\sigma \take \tau$ represents the permutation obtained by collapsing the indices of $\tau$ in a certain fashion.  We will mainly be focused on the case where $\tau$ is a transposition.  Then if $\tau$ connects two points on the same ``circle'' from $\sigma$, $\sigma \take \tau$ will be obtained by cutting the circle at those two points and gluing the pieces into two circles (Figure \ref{fig: sigma take tau 1}).  But if $\tau$ connects two different circles, then we cut each of those circles at the corresponding point and patch them together to create $\sigma \take \tau$ (Figure \ref{fig: sigma take tau 2}).
	\end{remark}
	
	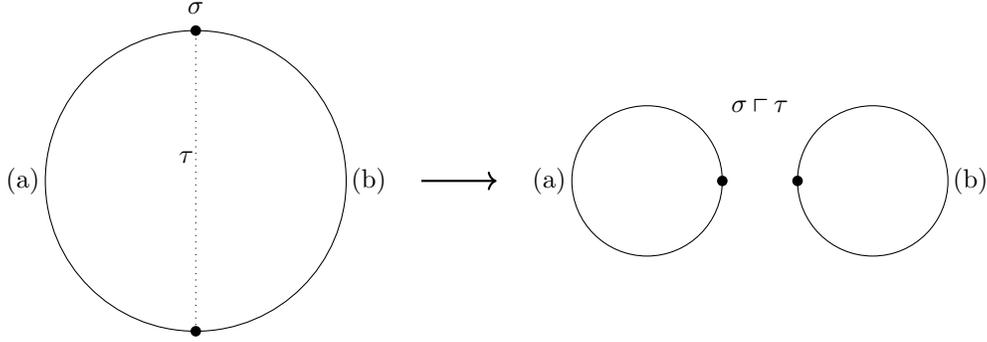
\begin{figure}
		
		\begin{center}
			
			\begin{tikzpicture}
				
				\draw (0,0) circle (2);
				\node at (0,2.3) {$\sigma$};
				\draw[dotted] (0,-2) to node[auto, label=above: $\tau$] {} (0,2);
				\node at (-2.3,0) {(a)};
				\node at (2.3,0) {(b)};
				\fill (0,2) circle (0.07);
				\fill (0,-2) circle (0.07);
				
				\draw[->,thick] (3,0) -- (4,0);
				
				\draw (6,0) circle (1);
				\node at (4.7,0) {(a)};
				\draw (9,0) circle (1);
				\node at (10.3,0) {(b)};
				\fill (7,0) circle (0.07);
				\fill (8,0) circle (0.07);
				\node at (7.5,1) {$\sigma \take \tau$};
				
			\end{tikzpicture}
			
		\end{center}
		
		\caption{A cycle $\sigma$ and a transposition $\tau$ on a set $S$ produce a permutation $\sigma \take \tau$ consisting of two cycles.  We label the two pieces (a) and (b).} \label{fig: sigma take tau 1}
		
	\end{figure}
	
	\begin{figure}
		
		\begin{center}
			
			\begin{tikzpicture}
				
				\draw (0,0) circle (2);
				\node at (0,2.3) {$\sigma \take \tau$};
				\node at (-2.3,0) {(a)};
				\node at (2.3,0) {(b)};
				\fill (0,2) circle (0.07);
				\fill (0,-2) circle (0.07);
				
				\draw[->,thick] (-4,0) -- (-3,0);
				
				\draw (-6,0) circle (1);
				\draw (-9,0) circle (1);
				\node at (-7.5,1) {$\sigma$};
				\draw[dotted] (-8,0) to node[auto,label=below: $\tau$] {} (-7,0);
				\node at (-4.7,0) {(b)};
				\node at (-10.3,0) {(a)};
				\fill (-7,0) circle (0.07);
				\fill (-8,0) circle (0.07);
				
			\end{tikzpicture}
			
		\end{center}
		
		\caption{A permutation $\sigma$ consisting of two cycles and a transposition $\tau$ connecting them produce a permutation $\sigma \take \tau$ consisting of one cycle.} \label{fig: sigma take tau 2}
		
	\end{figure}
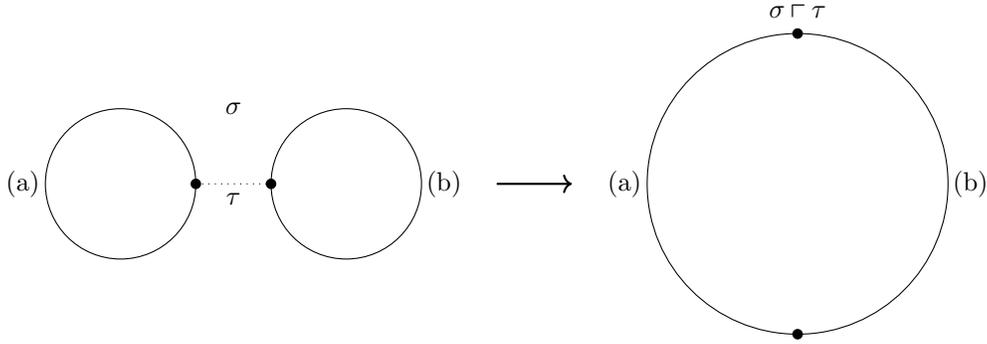
	
	\begin{lemma} \label{lem:takeaway}
		Let $S$ be a finite set, let $\sigma \in \Perm(S)$, and let $\tau$ and $\tau'$ be permutations of $S$ with disjoint supports.
		\begin{enumerate}[(1)]
			\item $(\sigma \take \tau)^{-1} = \sigma^{-1} \take \tau^{-1}$.
			\item $(\sigma \take \tau) \take \tau' = \sigma \take (\tau \tau') = (\sigma \take \tau') \take \tau$.
			\item Two indices $(i,k), (i',k') \in \{0,1\} \times (S \setminus \supp(\tau))$ are in the same cycle of $\tau' \tau \bowtie \sigma$ if and only if they are in the same cycle of $\tau' \bowtie (\sigma \take \tau)$.
		\end{enumerate}
	\end{lemma}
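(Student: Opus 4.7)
The key observation is that $\sigma\take\tau$ is a \emph{first-return map}: writing $T := S\setminus\supp(\tau)$, one reads off from the definition that $\sigma\take\tau = (\tau\sigma)_T$, where for any permutation $\pi$ of $S$ the map $\pi_T$ sends $k \in T$ to $\pi^{r(k)}(k)$ with $r(k) \geq 1$ the minimal index such that $\pi^{r(k)}(k) \in T$. Finiteness of $S$ ensures $\pi_T$ is a well-defined permutation of $T$. The proof rests on two elementary identities for first-return maps, both provable by step-by-step tracing of iterates: \textbf{(i)} $(\pi^{-1})_T = (\pi_T)^{-1}$ (if $\pi^r(k) = l \in T$ with intermediate iterates in $S\setminus T$, then reversing gives $\pi^{-r}(l) = k$ with the same property); \textbf{(ii)} if $\phi$ is a permutation of $S$ fixing $S\setminus T$ (so it restricts to $T$) and $T' \subseteq T$, then $(\phi\pi_T)_{T'} = (\phi\pi)_{T'}$ (because at each iterate of $\phi\pi$ lying in $S\setminus T$ the factor $\phi$ acts trivially, so successive $T$-visits of $\phi\pi$ match the iterates of $\phi\pi_T$).

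For (1), identity (i) gives $(\sigma\take\tau)^{-1} = (\sigma^{-1}\tau^{-1})_T$. It remains to show this agrees with $\sigma^{-1}\take\tau^{-1} = (\tau^{-1}\sigma^{-1})_T$. For $l \in T$, set $\alpha_r := (\sigma^{-1}\tau^{-1})^r(l)$ and $\beta_r := (\tau^{-1}\sigma^{-1})^r(l)$; a short induction using $\tau^{-1}(l) = l$ yields $\beta_r = \tau^{-1}\alpha_r$. Since $\tau^{-1}$ fixes $T$ and preserves $\supp(\tau)$, one has $\alpha_r \in T \Leftrightarrow \beta_r \in T$, and in that case $\alpha_r = \beta_r$; the first-return times and values therefore agree.

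For (2), the disjoint-supports hypothesis gives $\tau\tau' = \tau'\tau$ and shows that $\tau'$ fixes $S\setminus T = \supp(\tau)$. Applying (ii) with $\pi := \tau\sigma$, $\phi := \tau'$, and $T'' := T \cap (S\setminus\supp(\tau')) = S\setminus\supp(\tau\tau')$ yields
\[
(\sigma\take\tau)\take\tau' = (\tau'(\tau\sigma)_T)_{T''} = (\tau'\tau\sigma)_{T''} = \sigma\take(\tau\tau').
\]
Exchanging the roles of $\tau$ and $\tau'$ produces $(\sigma\take\tau')\take\tau = \sigma\take(\tau'\tau) = \sigma\take(\tau\tau')$.

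For (3), I will identify the first-return map of $(\tau'\tau)\bowtie\sigma$ to $\{0,1\}\times T$ with $\tau'\bowtie(\sigma\take\tau)$; since two points of $\{0,1\}\times T$ lie in the same ambient cycle iff they lie in the same orbit of the first-return map, (3) follows. Starting from $(0,k)$ with $k \in T$, successive applications of $(\tau'\tau)\bowtie\sigma$ produce second coordinates $k, \sigma(k), \tau'\tau\sigma(k), \sigma\tau'\tau\sigma(k), \ldots$; while these intermediate values lie in $\supp(\tau)$, $\tau'$ acts trivially and they coincide with $k, \sigma(k), \tau\sigma(k), \sigma\tau\sigma(k), \ldots$. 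The first escape to $T$ therefore occurs at odd step $2r(k)-1$ with value $\sigma(\tau\sigma)^{r(k)-1}(k) = (\tau\sigma)^{r(k)}(k) = (\sigma\take\tau)(k)$, giving first return $(1,(\sigma\take\tau)(k))$, which matches $\tau'\bowtie(\sigma\take\tau)$ applied to $(0,k)$. Starting from $(1,k)$ with $k \in T$, one step gives $(0,\tau'\tau(k)) = (0,\tau'(k))$, which is already in $\{0\}\times T$ since $\tau'$ preserves $T$, and again matches $\tau'\bowtie(\sigma\take\tau)$. The main obstacle is the bookkeeping in (3): one must carefully verify that intermediate second-coordinates remain in $\supp(\tau)$ up to the first escape and that the parity of steps lines up correctly so that escape occurs on a $0 \to 1$ transition with value $(\sigma\take\tau)(k)$.
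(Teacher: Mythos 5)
Your argument is correct, and its core mechanism --- tracing iterates of $\tau\sigma$ or $\tau'\tau\sigma$ until they first re-enter $S\setminus\supp(\tau)$ --- is the same one the paper uses. The difference is organizational: by abstracting $\sigma\take\tau$ as the first-return map $(\tau\sigma)_T$ of $\tau\sigma$ to $T = S\setminus\supp(\tau)$ and isolating the identities $(\pi^{-1})_T = (\pi_T)^{-1}$, $(\phi\,\pi_T)_{T'} = (\phi\pi)_{T'}$ for $\phi$ fixing $S\setminus T$, and the tower property $(\pi_T)_{T'} = \pi_{T'}$, you turn (1) and (2) into short formal deductions, whereas the paper re-inspects the sequence of intermediate points from scratch in each case. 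The one wrinkle in your route to (1) is that it lands on $(\sigma^{-1}\tau^{-1})_T$ rather than $(\tau^{-1}\sigma^{-1})_T$, forcing the auxiliary identity you supply via the conjugation $\beta_r = \tau^{-1}\alpha_r$; the paper's direct verification that $(\sigma^{-1}\take\tau^{-1})(k') = k$ sidesteps this. For (3) both proofs perform the same orbit trace, and the parity and bookkeeping concern you flag does resolve as expected: for $0 < j < r(k)$ the intermediate second coordinates $(\tau\sigma)^{j}(k)$ and $\sigma(\tau\sigma)^{j-1}(k)$ lie in $\supp(\tau)$, on which $\tau'$ is invisible, so the first escape is at the odd step $2r(k)-1$ with value $(1,(\sigma\take\tau)(k))$, matching $\tau'\bowtie(\sigma\take\tau)$. (As a minor aside, the parity of the first coordinate in the displayed orbit in the paper's proof of (3) appears to be off by one in its final two entries; your accounting is the correct one.)
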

	
	\begin{proof}
		(1) Note that $\supp(\tau) = \supp(\tau^{-1})$ and $\supp(\sigma) = \supp(\sigma^{-1})$.  Let $k \in S \setminus \supp(\tau)$.  Then $(\sigma \take \tau)(k)$ is obtained by applying $\tau \sigma$ iteratively until we reach a point not in $\supp(\tau)$, and $r(k)$ is the number of iterations needed.  Since $k$ and $(\tau \sigma)^{r(k)} (k)$ are not in the support of $\tau$, we also have $(\sigma \tau)^{r(k)}(k) = (\tau \sigma)^{r(k)}(k)$.  Because a point $j$ is in $\supp(\tau)$ if and only if $\tau(j) \in \supp(\tau)$, we also have $(\sigma \tau)^{s}(k) \in \supp(\tau)$ for $0 < s < r(k)$.  Letting $k' = (\sigma \tau)^{r(k)}(k)$, we have $(\tau^{-1} \sigma^{-1})^s(k') = (\sigma \tau)^{r(k)-s}(k)$, which is not in $\supp(\tau)$ for any $0 < s < r(k)$.  Thus, $(\sigma^{-1} \take \tau^{-1})(k') = k$, and hence $(\sigma^{-1} \take \tau^{-1})^{-1}(k) = (\sigma \take \tau)(k)$ as desired.
		
		(2) By symmetry and the fact that $\tau \tau' = \tau' \tau$, it suffices to show that $(\sigma \take \tau) \take \tau' = \sigma \take (\tau \tau')$.  Let $k \in S \setminus \supp(\tau \tau')$.  Then $((\sigma \take \tau) \take \tau')(k)$ is obtained by repeatedly applying $\tau'(\sigma \take \tau)$ to $k$ until we find a point not in the $\supp(\tau')$.  Similarly, each application of $\sigma \take \tau$ within this procedure represents iterating $\tau \sigma$ until we arrive at a point not in $\supp(\tau)$.  Thus, the final point will neither be in $\supp(\tau')$ nor in $\supp(\tau)$.  But every intermediate point obtained in this procedure will either be an output of $\tau'(\sigma \take \tau)$ that lands in $\supp(\tau')$ or an output of $\tau \sigma$ which lands in $\supp(\tau)$.  Therefore, the point $((\sigma \take \tau) \take \tau')(k)$ reached at the end of the procedure will be the first point not in $\supp(\tau \tau')$ reached by application of $\tau \tau' \sigma$, which means it is equal to $(\sigma \take (\tau \tau'))(k)$.
		
		(3) This follows by similar reasoning as (2).  Fix $k \in S' := S \setminus \supp(\tau)$.  Then the orbit of $(0,k)$ in $\tau' \bowtie (\sigma \take \tau)$ is obtained by alternately applying the operations $(0,x) \mapsto (1,(\sigma \take \tau)(x))$ and $(1,x) \mapsto (0,\tau'(x))$.  Each application of $\sigma \take \tau$ occurs by repeated alternation between $\sigma$ and $\tau$ until we reach a point that is not in the support of $\tau$.  Since the resulting point is fixed by $\tau$, we can write $(\sigma \take \tau)(x) = \sigma (\tau \sigma)^{r(x)-1}(x)$, or in other words, it is obtained by alternation between $\sigma$ and $\tau$ where the first and last terms are applications of $\sigma$.  There is a corresponding sequence of points obtained by iterating $\tau'\tau \bowtie \sigma$, namely
		\[
		(0,x) \mapsto (1,\sigma(x)) \mapsto (0,\tau \tau' \sigma(x)) \mapsto \dots \mapsto (1,(\tau \tau' \sigma)^{r(x)-1}) \mapsto (0,(\sigma \take \tau)(x)).
		\]
		From this we can see the correspondence of the cycles in $\tau' \tau \bowtie \sigma$ and $\tau' \bowtie (\sigma \take \tau)$.
	\end{proof}
	
	\begin{lemma} \label{lem:genusupdate}
		Let $\sigma$ and $\pi \in \Perm(S)$.  Let $\tau \in \Cyc(\pi)$ be a transposition.  Let $S' = S \setminus \supp(\tau)$ and $\pi' = \pi|_{S'}$.  Then
		\begin{equation} \label{eq:genius}
			g(\sigma,\pi)  - g(\sigma \take \tau, \pi') = \begin{cases} 1, & \tau \text{ is a $\sigma$-bridge} \\
				0, & \text{otherwise.}
			\end{cases}
		\end{equation}
	\end{lemma}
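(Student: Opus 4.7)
The plan is to expand the difference of genera directly from Definition~\ref{def: genus}:
\begin{multline*}
2g(\sigma,\pi) - 2g(\sigma\take\tau,\pi') = [\#S-\#S'] + [\#\Cyc(\sigma)-\#\Cyc(\sigma\take\tau)] \\
- [\#\Cyc(\pi)-\#\Cyc(\pi')] - [\#\Cyc(\pi\bowtie\sigma)-\#\Cyc(\pi'\bowtie(\sigma\take\tau))].
\end{multline*}
Since $|\supp(\tau)|=2$ and $\tau$ is exactly one cycle of $\pi$, the first and third brackets equal $2$ and $1$ respectively, so \eqref{eq:genius} reduces to showing
\[
[\#\Cyc(\sigma)-\#\Cyc(\sigma\take\tau)] - [\#\Cyc(\pi\bowtie\sigma)-\#\Cyc(\pi'\bowtie(\sigma\take\tau))] = \pm 1,
\]
with sign $+1$ when $\tau$ is a $\sigma$-bridge and $-1$ otherwise.

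For the bowtie difference, I would observe that $\pi = \pi'\tau$ (since $\pi'$ and $\tau$ have disjoint supports) and apply Lemma~\ref{lem:takeaway}(3) with $\tau'=\pi'$ to identify cycles of $\pi'\bowtie(\sigma\take\tau)$ with the cycles of $\pi\bowtie\sigma$ meeting $\{0,1\}\times S'$. Thus this difference counts the ``trapped'' cycles of $\pi\bowtie\sigma$ contained in $\{0,1\}\times\supp(\tau)$. Writing $\supp(\tau)=\{a,b\}$ and tabulating the four images $(0,a)\mapsto(1,\sigma(a))$, $(0,b)\mapsto(1,\sigma(b))$, $(1,a)\mapsto(0,b)$, $(1,b)\mapsto(0,a)$, a trapped cycle can exist only when $\{\sigma(a),\sigma(b)\}\subseteq\{a,b\}$. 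For the $\Cyc(\sigma)$ difference, I would invoke the geometric picture of Figures~\ref{fig: sigma take tau 1}--\ref{fig: sigma take tau 2}: $\sigma\take\tau$ either splits one cycle of $\sigma$ (non-bridge) or merges two cycles (bridge), with the caveat that an ``arc'' of length $0$ disappears, which happens exactly when $a$ or $b$ is a $\sigma$-fixed point or $\{a,b\}$ is itself a $2$-cycle of $\sigma$.

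I would then verify the required identity by case analysis on the pair $(\sigma(a),\sigma(b))$: in the case $(a,b)$, $\tau$ is a bridge of two singletons, yielding one trapped $4$-cycle and $\#\Cyc$-difference $2$; in the case $(b,a)$, $\tau$ is a non-bridge equal to a $\sigma$-cycle, yielding two trapped $2$-cycles and $\#\Cyc$-difference $1$; in the asymmetric cases where exactly one of $\sigma(a),\sigma(b)$ lies in $\{a,b\}$, there is one trapped $2$-cycle with $\#\Cyc$-difference $0$ or $1$ depending on the subcase; and in the generic case $\sigma(a),\sigma(b)\in S'$, there is no trapped cycle and the $\#\Cyc$-difference is $\pm 1$ according to whether $a$ and $b$ sit in different cycles of $\sigma$. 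The main obstacle is the bookkeeping in the degenerate cases (fixed points or length-$2$ cycles of $\sigma$ meeting $\{a,b\}$): these are precisely the configurations where trapped cycles appear in $\pi\bowtie\sigma$ and simultaneously where $\sigma\take\tau$ creates or destroys singleton cycles, and one must verify the two effects cancel with the correct sign in each subcase. A more conceptual proof via the Euler-characteristic picture of Remark~\ref{rem: surface genus}---computing the change in $\chi$ after deleting the type-(2) disk for $\tau$ and the two doubled $\tau$-edges---would also work, but it carries similar degeneracies and is unlikely to be shorter than the direct enumeration.
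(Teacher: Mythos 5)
Your overall plan matches the paper's proof: both expand the genus formula from Definition~\ref{def: genus}, reduce to computing the changes $\#\Cyc(\sigma)-\#\Cyc(\sigma\take\tau)$ and $\#\Cyc(\pi\bowtie\sigma)-\#\Cyc(\pi'\bowtie(\sigma\take\tau))$, use Lemma~\ref{lem:takeaway}(3) to recognize the latter difference as the number of cycles of $\pi\bowtie\sigma$ trapped inside $\{0,1\}\times\supp(\tau)$, and then case-split on how $\sigma$ acts on $\supp(\tau)=\{a,b\}$. (The paper's cases are organized by bridge/non-bridge and then by the sizes $m,n$ of the arcs; yours by the pair $(\sigma(a),\sigma(b))$, but these are equivalent partitions of the possibilities.)

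However, your sketched verification contains two concrete errors. First, the claim that a trapped cycle can exist only when $\{\sigma(a),\sigma(b)\}\subseteq\{a,b\}$ is false: if $\sigma(a)=b$ but $\sigma(b)\notin\{a,b\}$, then $(0,a)\mapsto(1,b)\mapsto(0,a)$ is already a trapped $2$-cycle, even though $\sigma(b)$ escapes. (Your own case~3 implicitly concedes this, so the earlier ``only when'' statement is inconsistent with your case list.) Second, and more seriously, the assertion that the asymmetric case always has ``one trapped $2$-cycle with $\#\Cyc$-difference $0$ or $1$'' cannot be right as stated: with one trapped cycle and $\#\Cyc$-difference $1$ you would compute $2g(\sigma,\pi)-2g(\sigma\take\tau,\pi')=1+1-1=1$, an odd number, which is impossible since the genus difference must be an integer. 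The correct subdivision of case~3 is: if the in-$\{a,b\}$ value is a fixed point (e.g.\ $\sigma(a)=a$, $\sigma(b)\notin\{a,b\}$), then $\tau$ is a $\sigma$-bridge joining a singleton to a longer cycle, there are \emph{zero} trapped cycles, and the $\#\Cyc$-difference is $1$, giving $g-g'=1$; if instead $\sigma(a)=b$ with $\sigma(b)\notin\{a,b\}$ (so $a,b$ are adjacent on a cycle of length $\geq 3$), $\tau$ is not a bridge, there is exactly one trapped $2$-cycle, and the $\#\Cyc$-difference is $0$, giving $g-g'=0$. With this correction (and the symmetric subcases), the enumeration closes and reproduces the paper's proof.
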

	
	\begin{proof}
		Note that $\# S' = \# S - 2$ and $\# \Cyc(\pi') = \# \Cyc(\pi) - 1$.  Therefore,
		\begin{multline} \label{eq:genuschange}
			2g(\sigma,\pi) - 2g(\sigma \take \tau, \pi') \\
			= 2 - 1 + \# \Cyc(\sigma \take \tau) - \# \Cyc(\sigma) - \# \Cyc(\pi \bowtie \sigma) + \# \Cyc(\pi' \bowtie (\sigma \take \tau)).
		\end{multline}
		Below we describe how to compute the $\# \Cyc(\sigma \take \tau) - \# \Cyc(\sigma)$ and $\Cyc(\pi \bowtie \sigma) - \# \Cyc(\pi' \bowtie (\sigma \take \tau))$ in various cases.  By plugging these results into \eqref{eq:genuschange}, the reader may verify \eqref{eq:genius}.
		
		\textbf{Case 1:} Suppose $\tau$ is a $\sigma$-bridge connecting indices $k_1$ and $\ell_1$ in two distinct cycles $(k_1 \dots k_m)$ and $(\ell_1 \dots \ell_n)$ of $\sigma$.
		
		\textbf{Subcase (a):} Suppose that $m$ and $n$ are not both $1$.  Then $\sigma \take \tau$ has a cycle $(k_2 \dots k_m \ell_2 \dots \ell_n)$.  Thus, we have $\# \Cyc(\sigma \take \tau) = \# \Cyc(\sigma) - 1$.  Moreover, by the previous lemma every orbit of $\pi' \bowtie (\sigma \take \tau)$ is contained in a corresponding orbit of $\pi \bowtie \sigma$.  Since at least one index $a$ out of $k_1$ and $\ell_1$ is inside a nontrivial cycle of $\sigma$, the cycles of $\pi \bowtie \sigma$ containing $(0,a)$ and $(1,a)$ have a nontrivial intersection with $\{0,1\} \times S'$; if the other index $b$ out of $k_1$ or $\ell_1$ is a fixed point of $\sigma$, then $(j,b)$ will be in the same cycle of $\pi \bowtie \tau$ with $(1-j,a)$.  Hence, every cycle of $\pi \bowtie \sigma$ has some elements in $\{0,1\} \times S'$, so the number of cycles of $\pi \bowtie \sigma$ and $\pi' \bowtie (\sigma \take \tau)$ are the same. By \eqref{eq:genuschange}, we get $g(\sigma,\pi) - g(\sigma \take \tau,\pi') = 1$.
		
		\textbf{Subcase (b):} Suppose $m = n = 1$.  Then both the points $k_1$ and $\ell_1$ are in the support of $\tau$, and in this case $\sigma \take \tau = \sigma|_{S'}$ and $\# \Cyc(\sigma \take \tau) = \# \Cyc(\sigma) - 2$ because two fixed points were removed.  Moreover, in this case, $(0,k_1)$, $(1,k_1)$, $(0,\ell_1)$, and $(1,\ell_1)$ form an orbit of $\pi \bowtie \sigma$.  Thus, $\pi' \bowtie (\sigma \take \tau)$ has one less orbit than $\pi \bowtie \sigma$.  Still, by \eqref{eq:genuschange}, we get $g(\sigma,\pi) - g(\sigma \take \tau,\pi') = 1$.
		
		\textbf{Case 2:} Suppose that $\tau$ is not a $\sigma$-bridge.  Let $\gamma$ be the cycle of $\sigma$ containing $\tau$.  We can write in cycle notation
		\[
		\gamma = (a k_1 \dots k_m b \ell_1 \dots \ell_n).
		\]
		Thus, $(k_1 \dots k_m) (\ell_1 \dots \ell_n)$ will be part of the cycle decomposition of $\sigma$.
		
		\textbf{Subcase (a):} Suppose that $m, n > 0$.  Then $\sigma \take \tau$ has one more cycle than $\sigma$.  Moreover, similar to the argument of Case 1(a), every cycle of $\pi \bowtie \sigma$ intersects $\{0,1\} \times S'$, so $\pi' \bowtie (\sigma \take \tau)$ has the same number of cycles as $\pi \bowtie \sigma$.
		
		\textbf{Subcase (b):} Suppose that exactly one of $m$ or $n$ is zero (by symmetry we may assume that $m = 0$).  Then $\sigma \take \tau$ has the cycle $(\ell_1 \dots \ell_n)$ instead of $\gamma$, but overall has the same number of cycles.  Now $(a,0)$ and $(b,1)$ will form an orbit in $\pi \bowtie \sigma$, but the orbit of $(a,1)$ will have $(\ell_1,1)$ and hence $\pi' \bowtie (\sigma \take \tau)$ has overall one less cycle than $\pi \bowtie \sigma$.
		
		\textbf{Subcase (c):} Suppose that $m = n = 0$.  Then $\sigma \take \tau$ has one less cycle than $\sigma$.  Moreover,$\pi \bowtie \sigma$ has two transpositions that swap $(0,a)$ with $(1,b)$ and $(1,a)$ with $(0,b)$.  Thus, $\# \Cyc(\pi' \bowtie (\sigma \take \tau)) = \# \Cyc(\pi \bowtie \sigma) - 2$.
		
		In all of these cases, we obtain \eqref{eq:genius}.
	\end{proof}
	
	When rewriting the expectation of a product indexed by $\sigma$ into a product indexed by $\sigma \take \tau$, we must also rewrite the corresponding term $\prod_{\gamma \in \Cyc(\pi \sigma)} \tr_N(\prod_{k \in \gamma} Z_k)$ in terms of $\pi|_{S \setminus \supp(\tau)}$ and $\sigma \take \tau$.  Thus, we make the following definition.
	
	\begin{notation} \label{not:updateZ}
		Given a finite set $S \subseteq \N$, permutations $\sigma, \tau \in \Perm(S)$ with $\supp(\tau) \neq S$, and $\bZ \in \M_N^S$, we define the \emph{$(\sigma,\tau)$-reduction} $\bZ^{\sigma,\tau} \in \M_N^{S \setminus \supp(\tau)}$ as follows.
		
		For each $j \in S \setminus \supp(\tau)$, we let $r(j) \geq 1$ be the first index such that $(\tau \sigma)^r(j) \not \in \supp(\tau)$ and then set
		\[
		Z_j^{\sigma,\tau} = Z_j Z_{(\tau \sigma)(j)} \dots Z_{(\tau \sigma)^{r(j)-1}(j)}.
		\]
		Note that if $r(j) = 1$, then $Z_j^{\sigma,\tau}$ simply equals $Z_j$.   There may also be some $j \in \supp(\tau)$ such that $(\tau \sigma)^r(j) \in \supp(\tau)$ for all $r \geq 1$, or in other words, $j$ is part of a cycle of $\tau \sigma$ that is fully contained in $\supp(\tau)$.  We take the scalar
		\[
		\lambda_{\sigma,\tau} = \prod_{\substack{\gamma \in \Cyc(\tau \sigma) \\ \supp(\gamma) \subseteq \supp(\tau)}} \tr_N\left( \prod_{k \in \gamma} Z_k \right);
		\]
		note that this may contain fixed points of $\tau \sigma$.  We then modify the value of $Z_m$, where $m = \min(S \setminus \supp(\tau))$, by multiplying it by $\lambda_{\sigma,\tau}$.
		
		In the case $\supp(\tau) = S$, the $(\sigma,\tau)$ reduction of $\mathbf{Z}$ is defined as the scalar $\lambda_{\sigma,\tau}$. All the cycles of $\tau \sigma$ are in $\supp(\tau)$ and hence contribute to the definition of $\lambda_{\sigma,\tau}$.
	\end{notation}
	
	We have defined $\bZ^{\sigma,\tau}$ in such a way to make the following statement hold.
	
	\begin{lemma} \label{lem:updateZ}
		Let $\sigma, \tau \in \Perm(S)$.  Set $S' = S \setminus \supp(\tau)$.  Let $\pi' \in \Perm(S')$, and let $\pi$ be the permutation $\pi|_{\supp(\tau)} = \tau$ and $\pi|_{S'} = \pi'$.    Let $\bZ \in \M_N^S$.  If $S' = \varnothing$, then $\prod_{\gamma \in \Cyc(\pi \sigma)} \tr_N(\prod_{k \in \gamma} Z_k) = \prod_{\gamma \in \Cyc(\tau \sigma)} \tr_N(\prod_{k \in \gamma} Z_k) = \lambda_{\sigma,\tau}$.  Otherwise, we have
		\[
		\prod_{\gamma \in \Cyc(\pi \sigma)} \tr_N \left( \prod_{k \in \gamma} Z_k \right) =	
		\prod_{\gamma \in \Cyc(\pi'(\sigma \take \tau))} \tr_N \left( \prod_{k \in \gamma} Z_k^{\tau,\sigma} \right).
		\]
	\end{lemma}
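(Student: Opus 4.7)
The plan is to decompose the product on the left-hand side according to whether each cycle of $\pi\sigma$ is contained entirely in $\supp(\tau)$ or meets $S' = S \setminus \supp(\tau)$, and then to match each type separately with the factors on the right-hand side. Since $\pi|_{\supp(\tau)} = \tau$, a cycle of $\pi\sigma$ lies in $\supp(\tau)$ precisely when the corresponding cycle of $\tau\sigma$ does. Hence these cycles contribute exactly $\prod_{\gamma \in \Cyc(\tau\sigma),\, \supp(\gamma) \subseteq \supp(\tau)} \tr_N(\prod_{k \in \gamma} Z_k) = \lambda_{\sigma,\tau}$, which by construction appears as the scalar prefactor in $Z_m^{\sigma,\tau}$ for $m = \min(S')$ (and is the entire content of the identity when $S' = \varnothing$).

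The main step is to describe the cycles of $\pi\sigma$ that meet $S'$ via their ``first return'' to $S'$. Fix $k \in S'$. I would show by induction on $s$ that as long as $(\tau\sigma)^s(k) \in \supp(\tau)$, the iterate $(\pi\sigma)^s(k)$ coincides with $(\tau\sigma)^s(k)$: indeed, if $(\tau\sigma)^{s-1}(k) \in \supp(\tau)$ and $\sigma$ sends it into $\supp(\tau)$, then $\pi$ acts on that value as $\tau$. Consequently, with $r(k)$ as in the definition of $\sigma \take \tau$, the orbit of $k$ under $\pi\sigma$ passes through the intermediate points $(\tau\sigma)(k), (\tau\sigma)^2(k), \dots, (\tau\sigma)^{r(k)-1}(k) \in \supp(\tau)$, and at step $r(k)$ we have
\[
(\pi\sigma)^{r(k)}(k) = \pi\bigl(\sigma \cdot (\tau\sigma)^{r(k)-1}(k)\bigr) = \pi'\bigl((\sigma \take \tau)(k)\bigr),
\]
since $\sigma \cdot (\tau\sigma)^{r(k)-1}(k) = (\tau\sigma)^{r(k)}(k) \in S'$. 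Thus the first return of $\pi\sigma$ to $S'$ starting at $k$ is exactly $\pi'(\sigma \take \tau)(k)$, giving a canonical bijection between cycles of $\pi'(\sigma \take \tau)$ and cycles of $\pi\sigma$ meeting $S'$.

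Given this bijection, one reads off the matching of the $Z$-products directly: for each $k \in S'$, the contiguous block of the $\pi\sigma$-cycle from $k$ up to (but not including) its first return to $S'$ visits exactly $k, (\tau\sigma)(k), \dots, (\tau\sigma)^{r(k)-1}(k)$, so the corresponding segment of $\prod_{j \in \gamma} Z_j$ equals $Z_k Z_{(\tau\sigma)(k)} \cdots Z_{(\tau\sigma)^{r(k)-1}(k)} = Z_k^{\sigma,\tau}$. Concatenating these segments around the cycle $\gamma'$ of $\pi'(\sigma \take \tau)$ yields $\prod_{k \in \gamma'} Z_k^{\sigma,\tau}$, up to a cyclic rotation depending on whether the conventions for writing cycles starting at $\min \supp(\cdot)$ match on both sides; this is harmless inside $\tr_N$ by trace cyclicity. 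Multiplying over all cycles and using that $\lambda_{\sigma,\tau}$ is absorbed into $Z_m^{\sigma,\tau}$ for $m = \min(S')$ closes the identity.

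The main obstacle is the bookkeeping in the induction on $s$ and verifying that no cycle of $\pi\sigma$ is ``missed'' or double-counted: one must check carefully that $k \in S'$ and $(\pi\sigma)^{r(k)}(k) \in S'$ are both in $S'$ and that the intermediate points are really in $\supp(\tau)$, so that the partition of $\Cyc(\pi\sigma)$ into the two types is exhaustive and the bijection with $\Cyc(\pi'(\sigma \take \tau))$ is well-defined. Once this combinatorial framework is set up, the algebraic identity is essentially a relabeling.
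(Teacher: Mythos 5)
Your proof is correct and follows essentially the same route as the paper: the paper's proof is terse, pointing to Lemma \ref{lem:takeaway}, whose key device is exactly the ``iterate $\tau\sigma$ until escaping $\supp(\tau)$'' argument you spell out in your induction, and the escape point is $\pi'((\sigma\take\tau)(k))$, giving the cycle correspondence. The two supporting observations you flag as needing care --- that a cycle of $\pi\sigma$ lies in $\supp(\tau)$ iff it is also a cycle of $\tau\sigma$ lying in $\supp(\tau)$ (hence yields $\lambda_{\sigma,\tau}$), and that the segment of the $\pi\sigma$-cycle between consecutive $S'$-points traces exactly $k,(\tau\sigma)(k),\dots,(\tau\sigma)^{r(k)-1}(k)$ --- both check out, so your bookkeeping concerns are resolvable as stated.
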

	
	\begin{proof}
		The argument is similar to the proof of Lemma \ref{lem:takeaway}.  Each cycle of $\pi' (\sigma \take \tau)$ corresponds to a cycle of $\pi \sigma$, and we have define $Z_k^{\sigma,\tau}$ so that the $Z_k$'s in the support of $\tau$ are absorbed into the $Z_k$'s for $k \in S'$ at the appropriate place in the cycle.  Moreover, the cycles of $\pi \sigma$ that do not appear in $\pi' (\sigma \take \tau)$, or those which are contained in $\supp(\tau)$, are absorbed into the scalar $\lambda_{\sigma,\tau}$.
	\end{proof}
	
	\subsection{Inductive proof of the genus expansion}
	
	Now let us prove Proposition \ref{prop:genusexpansion}.  Although the case of all Gaussians or all semicirculars is well known, we give a complete proof using the tools of this section together with integration by parts.  One of the goals is to illustrate the connection between Gaussian integration by parts and the operations on permutations (as was done for planar maps in \cite[\S 2.6]{GMS2006}), since Parraud's formula was first proved using integration by parts.
	
	\begin{proof}[Proof of Proposition \ref{prop:genusexpansion}]
		Fix $I$, $J$, $\ell$, and $Z_k$ as in the proposition.  First, consider the case where all the matrices are from the Gaussian Unitary Ensemble.  Let $m = \min(S)$, and let $\gamma = (m m_1 \dots m_n)$ be the cycle containing $m$.  Let $X^{(N)} = (X_i^{(N)})_{i \in I}$, and let
		\[
		F(X^{(N)}) = Z_m X_{\ell(m_1)}^{(N)} Z_{m_1} \dots X_{\ell(m_n)}^{(N)} Z_{m_n}
		\]
		and for $\gamma' \in \Cyc(\sigma) \setminus \gamma$, let
		\[
		\phi_{\gamma'}(X^{(N)}) = \tr_N\left( \prod_{k \in \gamma'} X_{\ell(k)}^{(N)} Z_k \right)
		\]
		so that
		\[
		\E \tr_N \left[ \prod_{\gamma' \in \Cyc(\sigma)} \tr_N\left( \prod_{k \in \gamma'} X_{\ell(k)}^{(N)} Z_k \right) \right] = \E \ip*{X_{\ell(m)}^{(N)}, F(X^{(N)}) \prod_{\gamma' \in \Cyc(\sigma) \setminus \gamma} \phi_{\gamma'}(X^{(N)})}_{\tr_N},
		\]
		where $\ip{A,B}_{\tr_N} = \tr_N(A^* B)$ for $A, B \in \M_N$ (recall that this restricts to a real inner product on the real subspace of self-adjoint matrices).  Using Gaussian integration by parts (Fact \ref{fact: IBP}), this is equal to
		\begin{multline*}
			\frac{1}{N^2} \E[\Div_{X_{\ell(m)}}(F \cdot \prod_{\gamma' \in \Cyc(\sigma) \setminus \gamma} \phi_{\gamma'})((X_i^{(N)}))] = \frac{1}{N^2} \E\biggl[\Div_{X_{\ell(m)}}(F)(X^{(N)}) \prod_{\gamma' \in \Cyc(\sigma) \setminus \gamma} \phi_{\gamma'}(X^{(N)}) \\
			+ \frac{1}{N^2}\sum_{\gamma' \in \Cyc(\sigma) \setminus \gamma} \tr_N \left[ F(X^{(N)}), \nabla_{X_{\ell(m)}} \phi_{\gamma'}(X^{(N)}) \right] \prod_{\gamma'' \in \Cyc(\sigma) \setminus \{\gamma,\gamma'\}} \phi_{\gamma''}(X^{(N)}) \biggr],
		\end{multline*}
		where the second inequality follows from the product rule for the divergence of a scalar-valued function times a vector-valued function (where in this case the vector space is $\M_N$).  The divergence of $F$, as a function of $X_i^{(N)}$, can be evaluated by Fact \ref{fact: polynomial divergence}:
		\[
		\frac{1}{N^2} \Div_{X_{\ell(m)}}(F(X^{(N)})) = \sum_{\substack{j \in [n] \\ \ell(j) = \ell(m)}} \tr_N(Z_m X_{\ell(m_1)}^{(N)} Z_{m_1} \dots X_{\ell(m_{j-1})} Z_{j-1}) \tr_N(Z_j X_{\ell(m_{j+1})} Z_{\ell(m_j)} \dots X_{\ell(m_n)} Z_{m_n}).
		\]
		The order of terms in this expression follows the cycles of $\sigma \take (m m_j)$.  From this we can compute that
		\[
		\frac{1}{N^2} \Div_{X_{\ell(m)}}(F(X^{(N)})) \prod_{\gamma' \in \Cyc(\sigma) \setminus \{\gamma\}} \phi_{\gamma'}(X^{(N)})
		= \sum_{\substack{j \in [n] \\ \ell(j) = \ell(m)}} \prod_{\beta \in \Cyc(\sigma \take (m m_j))} \tr_N\left( \prod_{k \in \beta} X_{\ell(k)}^{(N)}Z_k \right).
		\]
		Next, we consider the $\nabla_{X_{\ell(m)}} \phi_{\gamma'}$ terms.  Let $\gamma' = (p_1 \dots p_r)$.  By the second part of Fact \ref{fact: polynomial divergence}, applied to $\phi_{\gamma'}$ as a function of $X_{\ell(m)}^{(N)}$, we get
		\[
		\nabla_{X_{\ell(m)}} \phi_{\gamma'}(X^{(N)}) = \sum_{\substack{j \in [r] \\ \ell(p_j) = \ell(m)}} Z_{p_{j+1}}X_{\ell(p_{j+1})}^{(N)} Z_{p_{j+2}} \dots X_{\ell(p_r)}^{(N)} Z_{p_r} X_{\ell(p_1)}^{(N)} Z_{p_1} \dots X_{p_{j-1}}^{(N)} Z_{p_j}
		\]
		Therefore, taking the inner product on $\M_N$ (or more accurately the complex bilinear extension of the real inner product on $(\M_N)_{\sa}$) and using cyclic symmetry of the trace, we get
		\begin{multline*}
			\frac{1}{N^2} \tr_N \left[ F(X^{(N)}) \nabla_{X_{\ell(m)}} \phi_{\gamma'}(X^{(N)}) \right] \\
			= \frac{1}{N^2} \sum_{\substack{j \in [r] \\ \ell(p_j) = \ell(m)}} \tr_N(X_{\ell(p_1)}^{(N)} Z_{p_1} \dots X_{\ell(p_{j-1})}^{(N)} Z_{p_{j-1}} Z_m X_{\ell(m_1)}^{(N)} Z_{m_1} \dots X_{\ell(m_n)}^{(N)} Z_{p_j} Z_{m_n} X_{\ell(p_{j+1})}^{(N)} Z_{p_{j+1}} \dots X_{\ell(p_r)}^{(N)} Z_{p_r}).
		\end{multline*}
		The order of multiplication here matches the order of the cycle of $\sigma \take (m p_j)$ obtained from the two cycles $\gamma$ and $\gamma'$ of $\sigma$ (in the case where $\gamma$ and $\gamma'$ were singletons the product above reduces to a deterministic matrix).  After multiplying this by the other terms $\phi_{\gamma''}$ for $\gamma'' \neq \gamma, \gamma'$, we obtain
		\[
		\frac{1}{N^2} \prod_{\beta \in \Cyc(\sigma \take (m p_j))} \tr_N\left( \prod_{k \in \beta} X_{\ell(k)}^{(N)}Z_k \right).
		\]
		Overall, we have one term for each $q$ such that $\ell(q) = \ell(m)$, as $q$ ranges over the $m_j$'s and $p_j$'s.  Moreover, in the case of the $m_j$'s, the transposition $(m q)$ is not a $\sigma$-bridge, but in the case of the $p_j$'s it is a $\sigma$-bridge.  Overall, we get
		\[
		\sum_{\substack{q \in S \\ \ell(q) = \ell(m)}} \frac{1}{(N^2)^{\mathbf{1}_{(mq) \text{ is a $\sigma$-bridge}}}} \E \left[ \prod_{\beta \in \sigma \take (mq)} \tr_N \left( \prod_{k \in \beta} Z_k^{\sigma,(mq)} \right) \right].
		\]
		By inductive hypothesis, we can evaluate this as
		\[
		\sum_{\substack{q \in S \\ \ell(q) = \ell(m)}} \sum_{\substack{\pi' \in \Perm_2(S \setminus \{m,q\}) \\ \pi' \text{ compatible with } \ell|_{S \setminus \{m,q\}}}} \frac{1}{N^{2\cdot \mathbf{1}_{(mq) \text{ is a $\sigma$-bridge}}}} \frac{1}{N^{2g(\sigma \take (mq), \pi')}}.
		\]
		Letting $\pi$ be the permutation of $S$ formed by $\pi'$ and $(mq)$, we get $g(\sigma,\pi) = g(\sigma \take (mq),\pi') + \mathbf{1}_{(mq)\text{ is a $\sigma$-bridge}}$.  When we sum over the different choices of $q$, then $\pi$ will range over all pair permutations of $S$ compatible with $\ell$, because $m$ must be paired with some $q$ with the same color.  This completes the inductive argument for the GUE case.
		
		Next, consider the case of a mixture of GUE and semicircular matrices.  Let us proceed by induction on the number of semicircular terms in our monomial.  Suppose that there is at least one semicircular at position $m$, and let $\gamma = (m m_1 \dots m_n)$ be the cycle containing $m$.  We apply free integration by parts (Fact \ref{fact: free IBP}) to the semicircular variable $X_{\ell(m)}$ that is freely independent from the $*$-algebra generated by $\M_N$ and $(X_j)_{j \in J \setminus \{\ell(m)\}}$ to obtain
		\begin{multline*}
			\tr_{\M_N * \cM}(X_{\ell(m)} Z_m Y_{m_1} \dots Y_{m_n} Z_{m_n}) \\
			= \sum_{\substack{j \in [n] \\ \ell(m_j) = \ell(m))}} \tr_{\M_N * \cM}(Z_m Y_{m_1} \dots Y_{m_{j-1}} Z_{m_{j-1}}) \tr_{\M_N * \cM_J}(Z_{m_j} Y_{m_{j+1}} Z_{m_{j+1}} \dots Y_{m_n} Z_{m_n}).
		\end{multline*}
		From this we find that
		\[
		\E \left[ \prod_{\beta \in \Cyc(\sigma)} \tr_{\M_N * \cM} \left( \prod_{k \in \beta} Y_k Z_k \right) \right]
		= \sum_{\substack{j \in [n], \\ \ell(j) = \ell(m)}} \E \left[ \prod_{\beta \in \Cyc(\sigma \take (mm_j))} \tr_{\M_N * \cM} \left( \prod_{k \in \beta} Y_k Z_k^{\sigma,(mm_j)} \right) \right].
		\]
		By induction hypothesis, the expression for each $j$ can be evaluated as a sum over $\pi' \in \Perm_2(S')$ where $S' = S \setminus (mm_j)$ that are compatible with $\ell$ and such that $\pi'|_{\ell^{-1}(J)}$ is $\sigma$-noncrossing.  Let $\pi \in \Perm_2(S)$ be the permutation formed from $\pi'$ and $(mm_j)$.  Because $(mm_j)$ is not a $\sigma$-bridge, we have $g(\sigma,\pi) = g(\sigma \take (mm_j), \pi')$.  Moreover, if $\pi'|_{\ell^{-1}(J)}$ is $(\sigma \take (mm_j))$-noncrossing, then $g(\sigma \take (mm_j),\pi'|_{\ell^{-1}(J)}) = 0$, which implies that $g(\sigma, \pi|_{\ell^{-1}(J)}) = 0$ and hence $\pi|_{\ell^{-1}(J)}$ is $\sigma$-noncrossing.  Similar reasoning shows the converse direction.  Therefore, arguing similarly to the GUE case, the sum over $(mm_j)$ and sum over $\Perm_2(S \setminus \{m,m_j\})$ that we get from the inductive hypothesis combine to produce the sum over $\pi \in \Perm_2(S)$ that we wanted, so the induction step is complete.
	\end{proof}
	
	\section{Combinatorial proof of Parraud's formula} \label{sec: Parraud formula proof}
	
	\subsection{Interpolation between GUE and semicircular}
	
	In this section, we use all the notation from \S \ref{subsec: statement}.  As in Parraud's work, we interpolate between $X_V^{(N)}$ and a free semicircular family $X_V$.  Here we view $X_V^{(N)}$, $X_V$, and $X_W$ as elements of $\M_N * \cM_{V \oplus W}$.  We then set
	\begin{equation} \label{eq:etadefinition}
		\eta_f(t) = \E \tr_{\M_N * \cM_{V\oplus W}}[f(t^{1/2}X_V^{(N)}+ (1-t)^{1/2} X_V,X_W)].
	\end{equation}
	Note that
	\begin{align*}
		\eta_f(0) &= \tr_{\cM_{V \oplus W}}[f(X_V,X_W)] \\
		\eta_f(1) &= \E \tr_{\M_N * \cM_W}[f(X_V^{(N)},X_W)]
	\end{align*}
	
	\begin{observation} \label{obs:derivativeintegral}
		In order to prove Theorem \ref{thm:Parraud1}, it suffices to show that
		\begin{equation} \label{eq:maingoal}
			\eta_f'(t) = \frac{1}{N^2} \int_0^1 \mathbb{E} \circ \tr_{\M_N * \cM_{(V \oplus W) \otimes \R^6}} \left[ (\alpha_{s,t})_*[(1-t)D_{V,V}^{\crss} + D_{V,W}^{\crss}](f)(X_V^{(N)},X_{(V\oplus W) \otimes \R^6}) \right] \,ds.
		\end{equation}
	\end{observation}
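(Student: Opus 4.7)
The plan is to prove the observation by a direct application of the fundamental theorem of calculus to $\eta_f$ and then recognize the resulting double integral as an instance of $T_{V,W}^{\crss,(N)}f$. Granting \eqref{eq:maingoal} as a hypothesis, I would begin by writing
\[
\eta_f(1) - \eta_f(0) = \int_0^1 \eta_f'(t)\,dt
\]
and substituting the right-hand side of \eqref{eq:maingoal}. Using the explicit values of $\eta_f(0)$ and $\eta_f(1)$ recorded just above the observation, this becomes
\[
\E \tr[f(X_V^{(N)},X_W)] - \tr[f(X_V,X_W)] = \frac{1}{N^2}\int_0^1\!\!\int_0^1 \E \tr\bigl[(\alpha_{s,t})_*[(1-t)D_{V,V}^{\crss}+D_{V,W}^{\crss}](f)(X_V^{(N)},X_{(V\oplus W)\otimes \R^6})\bigr]\,ds\,dt.
\]

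The key step is then to recognize the right-hand side as $\frac{1}{N^2}\E\tr[T_{V,W}^{\crss,(N)}f(X_V^{(N)},X_{(V\oplus W)\otimes\R^6})]$, which is immediate from Definition \ref{def: T cross}. The paragraph following that definition explains that $T_{V,W}^{\crss,\cA}f$ is defined as an integral in the finite-dimensional subspace $U \subseteq \cA\ip{V \oplus [(V\oplus W)\otimes\R^6]}$ consisting of polynomials of degree at most $\deg(f)$ with coefficients in a certain finite-dimensional subspace of $\cA$. Because the evaluation map and the linear functional $\E \circ \tr$ are both linear on this finite-dimensional space, they commute with the integral over $(s,t) \in [0,1]^2$, producing the desired identification and hence the statement of Theorem \ref{thm:Parraud1}.

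The only auxiliary point needing attention is that FTC actually applies, i.e., that $\eta_f$ is $C^1$ on $[0,1]$. This is essentially trivial: expanding $f$ as a sum of monomials, each monomial in $f(t^{1/2}X_V^{(N)}+(1-t)^{1/2}X_V,X_W)$ produces terms of the form $t^{a/2}(1-t)^{b/2}\cdot (\text{mixed monomial in } X_V^{(N)}, X_V, X_W)$, and the Wick/free-pairing structure of the expected trace over the freely independent families $X_V^{(N)}$ and $X_V$ forces the exponents $a$ and $b$ in surviving terms to be even, so $\eta_f$ is in fact a polynomial in $t$. In summary, there is no real obstacle here: this observation is simply a bookkeeping reduction of Theorem \ref{thm:Parraud1} to the computation of $\eta_f'(t)$, which is where the combinatorial work of the paper will actually take place.
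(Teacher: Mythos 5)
Your proposal is correct and takes essentially the same route as the paper: integrate \eqref{eq:maingoal} over $t\in[0,1]$, plug in the boundary values $\eta_f(0)=\tr_{\cM_{V\oplus W}}[f(X_V,X_W)]$ and $\eta_f(1)=\E\tr_{\M_N*\cM_W}[f(X_V^{(N)},X_W)]$, and then exchange the $(s,t)$-integral with $\E\circ\tr$ to recognize $T_{V,W}^{\crss,(N)}f$ from Definition~\ref{def: T cross}. Your added remark that $\eta_f$ is actually a polynomial in $t$ (so that the fundamental theorem of calculus applies without qualification) is a small extra justification that the paper leaves implicit, and it is anyway confirmed explicitly later by Lemma~\ref{lem:etaformula}.
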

	
	\begin{proof}
		Integrating \eqref{eq:maingoal} over $t \in [0,1]$ will yield
		\begin{multline*}
			\eta_f(1) - \eta_f(0) \\
			= \frac{1}{N^2} \int_0^1 \int_0^1 \mathbb{E} \circ \tr_{\M_N * \cM_{(V \oplus W) \otimes \R^6}} \left[ (\alpha_{s,t})_*[(1-t)D_{V,V}^{\crss} + D_{V,W}^{\crss}](f)(X_V^{(N)},X_{(V\oplus W) \otimes \R^6}) \right] \,ds\,dt.
		\end{multline*}
		Since the integrand is a polynomial in Gaussian random variables, deterministic matrices, and free semicirculars, we can take the integral with respect to $s$ and $t$ inside the expectation and trace, so that the right-hand side reduces to $E \tr_{\M_N * \cM_{(V \oplus W) \otimes \R^6}}[T_{V,V}^{\crss}f(X_V^{(N)},X_{(V \oplus W) \otimes \R^6})]$.
	\end{proof}
	
	Then we want to compute $\eta_f'(t)$.  Here is where we follow a different approach than Parraud and give a combinatorial argument that illuminates the relationship of this formula with the genus expansion.  To make the computation tractable, we first restrict to the case of a monomial.
	
	\begin{observation} \label{obs:specialf}
		To prove Theorem \ref{thm:Parraud1}, it suffices to check  \eqref{eq:maingoal} when $f \in \M_N\ip{V \oplus W}$ has the form
		\[
		f(x_V,x_W) = x_{u(1)} Z_1 x_{u(2)} \dots x_{u(r)} Z_r,
		\]
		for some $u: [r] \to V \sqcup W$ and $Z_1$, \dots, $Z_r \in \M_N$.
	\end{observation}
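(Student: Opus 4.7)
The plan is to exploit the $\R$-linearity of both sides of (4.2) in $f$ and reduce to the claimed spanning set. On the left, $\eta_f'(t)$ is linear in $f$ since $\eta_f$ factors through the evaluation map, trace, and expectation, and differentiation in $t$ preserves linearity. On the right, the derivations $\partial_V$ and $\partial_W$ are $\R$-linear, so $D_V^\circ$, $L_V$, $D_{V,V}^{\crss}$, and $D_{V,W}^{\crss}$ are all $\R$-linear; $(\alpha_{s,t})_*$ is a $*$-homomorphism, and the remaining operations (trace, expectation, Lebesgue integration) are linear. Hence it suffices to verify (4.2) on any spanning set of $\M_N\ip{V \oplus W}$.

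Next I would describe such a spanning set. Any element of $\M_N\ip{V \oplus W}$ is a $\C$-linear combination of monomials $Z_0 x_{u(1)} Z_1 \cdots x_{u(r)} Z_r$ with $Z_i \in \M_N$ and $u(i) \in V \oplus W$; using the relation $x_{v \oplus w} = x_v + x_w$ and expanding by multilinearity, I may assume each $u(i) \in V \sqcup W$. If $r = 0$, then $f = Z_0 \in \M_N$ and both sides of (4.2) vanish trivially: $\eta_{Z_0}(t) = \E \tr(Z_0)$ is independent of $t$, while $\partial_V|_{\M_N} = 0 = \partial_W|_{\M_N}$ yields $D_{V,V}^{\crss}(Z_0) = D_{V,W}^{\crss}(Z_0) = 0$. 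For $r \geq 1$, I would cycle $Z_0$ to the end by setting $g = x_{u(1)} Z_1 \cdots x_{u(r)} Z_r$ and $f' = g \cdot Z_0 = x_{u(1)} Z_1 \cdots x_{u(r)} (Z_r Z_0)$, which is in the form required by the observation. Since $f = Z_0 \cdot g$ and the trace in $\M_N * \cM_{V \oplus W}$ is cyclic, $\eta_f = \eta_{f'}$, and hence the left side of (4.2) for $f$ agrees with that for $f'$.

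For the right side, I would establish the cyclic identity $D_V^\circ(AB) = D_V^\circ(BA)$ for all $A, B \in \M_N\ip{V \oplus W}$. This is a short direct computation: writing $\partial_V A = \sum_i v_i \otimes a_i \otimes b_i$ and $\partial_V B = \sum_j v_j' \otimes c_j \otimes d_j$, the Leibniz rule for $\partial_V$ followed by $\id_V \otimes \mathfrak{m}_{2,1}$ yields $D_V^\circ(AB) = \sum_i v_i \otimes b_i B a_i + \sum_j v_j' \otimes d_j A c_j$, which is manifestly symmetric in $A$ and $B$. Applied to $A = Z_0$ and $B = g$, this gives $D_V^\circ(f) = D_V^\circ(f')$; since $L_V$ is obtained from $D_V^\circ$ by fixed linear maps, and $D_{V,V}^{\crss}$, $D_{V,W}^{\crss}$ are in turn fixed linear maps applied to $L_V$ (Definition \ref{def: crossing derivative}), I conclude that the right side of (4.2) for $f$ agrees with that for $f'$. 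The main obstacle is the cyclic identity for $D_V^\circ$, which is straightforward but carries the rest of the argument; everything else is linear bookkeeping.
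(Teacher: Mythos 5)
Your proof is correct and takes essentially the same route as the paper: reduce by linearity to monomials with $u(i) \in V \sqcup W$, then use cyclic invariance to absorb the leading coefficient $Z_0$ into $Z_r$. The one place where you go further than the paper is worth noting: the paper justifies the last step only by remarking that the \emph{trace} of $f(X_V^{(N)},X_W)$ is cyclically invariant, which handles $\eta_f(t)$ but does not by itself show that the right-hand side of~\eqref{eq:maingoal} is unchanged when $Z_0$ is cycled to the end. You close this gap explicitly by proving the trace identity $D_V^\circ(AB) = D_V^\circ(BA)$ via the Leibniz rule and then observing that $L_V$, $D_{V,V}^{\crss}$, and $D_{V,W}^{\crss}$ are all obtained from $D_V^\circ$ by post-composition with fixed linear maps, so they inherit the same invariance. (The paper implicitly relies on this standard property of the cyclic gradient but does not spell it out.) You also dispose of the $r=0$ base case, which the paper leaves tacit. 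Both are small additions, but they make the reduction fully rigorous.
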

	
	\begin{proof}
		Since the operator $T_{V,V}^{\crss}$ is linear, it suffices to check the equality when $f \in \M_N\ip{V \oplus W}$ has the form
		\[
		Z_0 x_{u(1)} Z_1 x_{u(2)} \dots x_{u(r)} Z_r,
		\]
		for $u(1)$, \dots, $u(k) \in V \oplus W$ and $Z_0$, \dots, $Z_r \in \M_N$.  Again, by linearity, we can further assume that $u(j) \in V \sqcup W$.  Moreover, by cyclic symmetry, the trace of $f(X_V^{(N)},X_W)$ will be the same if we move $Z_0$ to the right-hand side, and hence we may absorb $Z_0$ into $Z_r$.
	\end{proof}
	
	In order to prove \eqref{eq:maingoal}, we want to expand both sides as sums over $\Perm_2([r])$ (or equivalently over pair partitions).  The first step is to express $\E \tr_{\M_N * \cM_W}(f(X_V^{(N)},X_W))$ combinatorially using Proposition \ref{prop:genusexpansion}.
	
	\begin{notation}
		We say a permutation $\pi \in \Perm([r])$ is \emph{compatible} with a partition $(A_1,\dots,A_k)$ of $[r]$ if each cycle of $\pi$ is contained in one of the $A_j$'s.  Equivalently, this means that $\pi$ is compatible with the labeling $\ell: [r] \to [k]$ given by $\ell|_{A_j} = j$ in the sense of Definition \ref{def:labeling}.
	\end{notation}
	
	\begin{notation}
		Given $f$ as in Observation \ref{obs:specialf}, we denote by $S \subseteq [r]$ the set of indices $i$ such that $u(i) \in W$; this notation applies throughout the remainder of this section.
	\end{notation}
	
	\begin{lemma} \label{lem:etaformula}
		Let $f$ be as in Observation \ref{obs:specialf}. Let $\sigma \in \Perm([r])$ be the cycle $\sigma = (12\dots r)$.  Then
		\[
		\eta_f(t) = \sum_{\substack{
				A, B \subseteq [r] \\
				A \sqcup B \sqcup S = [r]}} \sum_{\substack{\pi \in \Perm_2([r]) \\
				\pi \text{\normalfont~and } (A,B,S) \text{\normalfont~compatible,} \\
				\pi|_{B \cup S} \text{\normalfont $\sigma$-noncrossing} }}
		\frac{1}{N^{2g(\sigma,\pi)}}
		a(\pi) b(\pi)
		t^{|A|/2} (1 - t)^{|B|/2}
		\]
		where
		\begin{align*}
			a(\pi) &:= \prod_{(ij) \in \Cyc(\pi)} \ip{u(i),u(j)}_{V \oplus W}, \\
			b(\pi) &:= \prod_{\gamma \in \Cyc(\pi \sigma)} \tr_N\left( \prod_{k \in \gamma} Z_k \right).
		\end{align*}
	\end{lemma}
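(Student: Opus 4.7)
The plan is to expand $\eta_f(t)$ by multilinearity and then apply the mixed genus expansion of Proposition \ref{prop:genusexpansion} to each resulting term.

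First, I would substitute the interpolation $x_{u(i)} \mapsto t^{1/2} X_{u(i)}^{(N)} + (1-t)^{1/2} X_{u(i)}$ (for $u(i) \in V$) and $x_{u(i)} \mapsto X_{u(i)}$ (for $u(i) \in W$) into the monomial $f$, and distribute the product. This produces a sum indexed by partitions $A \sqcup B \sqcup S = [r]$, where $S$ is the fixed set of $W$-indices, $A$ collects the $V$-indices assigned to the GUE term, and $B$ collects the $V$-indices assigned to the $V$-semicircular term, with prefactor $t^{|A|/2}(1-t)^{|B|/2}$.

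Next, for each such $(A, B, S)$, the remaining expected trace fits the hypothesis of Proposition \ref{prop:genusexpansion}. Since that proposition is stated with discrete indices, I would fix orthonormal bases of $V$ and $W$, expand each $X_{u(i)}^{(N)} = \sum_\alpha \langle u(i), e_\alpha \rangle X_{e_\alpha}^{(N)}$ and similarly each $V$- or $W$-semicircular variable, apply Proposition \ref{prop:genusexpansion} using the basis vectors as three disjoint label families (GUE-from-$V$, semicircular-from-$V$, semicircular-from-$W$), and then sum over the basis. For each transposition $(ij) \in \Cyc(\pi)$, the Parseval identity $\sum_\alpha \langle u(i), e_\alpha \rangle \langle u(j), e_\alpha \rangle = \langle u(i), u(j) \rangle$ assembles the $\delta$-correlations of the discrete proposition into the inner-product factor $a(\pi)$ (with inner products taken in $V \oplus W$, so that cross-terms between $V$- and $W$-labels vanish automatically). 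Compatibility of $\pi$ with $(A,B,S)$ is automatic because the three label families are disjoint inside $I \sqcup J$, and the $\sigma$-noncrossing condition on $\pi|_{B \cup S}$ is inherited verbatim because $B \cup S$ is precisely $\ell^{-1}(J)$, the semicircular portion of the labeling.

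Finally, the factor $b(\pi) = \prod_{\gamma \in \Cyc(\pi \sigma)} \tr_N(\prod_{k \in \gamma} Z_k)$ and the genus weight $N^{-2g(\sigma,\pi)}$ transfer directly from Proposition \ref{prop:genusexpansion}, and collecting the prefactors produces the stated formula. The main technical point is executing the basis expansion cleanly, converting the discrete $\delta$-correlations into vectorial inner products while preserving the noncrossing structure; since the expansion is polylinear and the noncrossing constraint depends only on the GUE/semicircular distinction rather than on the individual basis labels, this amounts to bookkeeping and introduces no substantively new combinatorics beyond Proposition \ref{prop:genusexpansion} itself.
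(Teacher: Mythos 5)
Your proposal is correct and follows essentially the same approach as the paper's proof: expand the interpolation by multilinearity to produce the $(A,B,S)$ partition and the $t^{|A|/2}(1-t)^{|B|/2}$ prefactor, then apply the mixed genus expansion (Proposition \ref{prop:genusexpansion}), and use multilinearity/orthonormal-basis reduction to convert the discrete $\delta$-correlations into the inner-product factor $a(\pi)$. The only cosmetic difference is the order of the two reductions --- the paper first reduces to the case where each $u(j)$ is an orthonormal basis vector (so that the labeling $\ell$ is fixed from the start, and $a(\pi) \in \{0,1\}$ enforces $\ell$-compatibility), and then expands the interpolation, whereas you expand the interpolation first and then run the explicit basis-plus-Parseval argument inside the sum; these are equivalent, and your version is simply a more explicit rendering of the paper's ``by multilinearity it suffices'' step.
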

	
	\begin{proof}
		We can assume without loss of generality that $V \oplus W$ is finite-dimensional by restricting to the span of $u(1)$, \dots, $u(r)$.  Then because both sides of the equation are real-multilinear in $u(1)$, \dots, $u(r)$, it suffices to consider the case where the $u(j)$'s are picked from a given orthonormal basis $\{e_1, \dots, e_m\}$ for $V \oplus W$, obtained as the union of an orthonormal basis for $V$ and one for $W$.  In other words, $u(j) = e_{\ell(j)}$ for each $j = 1, \dots, r$.  Thus, $\ip{u(i),u(j)}_{V \oplus W} = \delta_{\ell(j),\ell(j)}$ for $i, j \in [r]$.  Hence, $a(\pi)$ will equal $1$ if $\pi$ is compatible with the labeling $\ell$ and $0$ otherwise.
		
		Let $S$ be the set of indices $j$ for which $u(j) \in W$.  For each index $j \in [r] \setminus S$ with $u(j) \in V$, we are substituting $t^{1/2} X_{u(j)}^{(N)} + (1 - t)^{1/2} X_{u(j)}$.  We thus expand the expected trace of $f$ using multilinearity of $f$ in the variables $X_V$.  Each term is represented by the choice of partition of $[r] \setminus S$ into $A$ and $B$, where $A$ represents the indices where we chose $t^{1/2} X_{u(j)}^{(N)}$ and $B$ represents the set of indices where we chose $(1 - t)^{1/2} X_{u(j)}$.  Once $A$ and $B$ have been chosen, pull out the factors of $t^{|A|/2}$ and $(1 - t)^{|B|/2}$, and then evaluate the expectation of the monomial using Proposition \ref{prop:genusexpansion}.  This results in a sum over $\pi \in \Perm_2([r])$ that respect the partition $A$, $B$, $C$, are compatible with $\ell$, and are $\sigma$-noncrossing when restricted to $B \cup S$.  Since the term $a(\pi)$ is zero when $\pi$ is not compatible with $\ell$, we may remove the restriction of $\ell$-compatibility without changing the value, and hence the formula is proved in this special case.
	\end{proof}
	
	Next, we differentiate this formula for $\eta_f$.
	
	\begin{lemma} \label{lem:etaderivative}
		With the notation of Lemma \ref{lem:etaformula}, we have
		\[
		\eta_f'(t) = \sum_{\tau \in \Trans([r])}
		\sum_{\substack{
				A, B \subseteq [r] \\
				A \sqcup B \sqcup S = [r] \\
				\supp(\tau) \subseteq A}}
		\sum_{\substack{
				\pi \in \Perm_2([r]) \\
				\pi \text{\normalfont~and } (A,B,S) \text{\normalfont~compatible} \\
				\pi|_{B \cup S} \text{\normalfont ~$\sigma$-noncrossing} \\
				\pi|_{B \cup S \cup \supp(\tau)} \text{\normalfont ~$\sigma$-crossing}}}
		\frac{1}{N^{2g(\sigma,\pi)}}
		a(\pi) b(\pi)
		t^{|A|/2-1} (1-t)^{|B|/2}.
		\]
	\end{lemma}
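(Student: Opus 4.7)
The plan is to differentiate the formula of Lemma \ref{lem:etaformula} directly and then reorganize and cancel. Applying the product rule to $t^{|A|/2}(1-t)^{|B|/2}$, I would write $\eta_f'(t) = I - J$, where
\[
I = \sum_{(A,B,\pi)} \frac{|A|}{2} \cdot \frac{1}{N^{2g(\sigma,\pi)}} a(\pi) b(\pi) \cdot t^{|A|/2-1}(1-t)^{|B|/2},
\]
and $J$ is the analogous sum with $|B|/2$ and $t^{|A|/2}(1-t)^{|B|/2-1}$, both ranging over the triples of Lemma \ref{lem:etaformula}.

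Since $\pi \in \Perm_2([r])$ is $(A,B,S)$-compatible, every cycle of $\pi$ is a transposition contained in one of $A$, $B$, $S$; the factor $|A|/2$ therefore equals the number of $\tau \in \Cyc(\pi)$ with $\supp(\tau) \subseteq A$, and similarly for $|B|/2$. Using this, I would reindex $I$ and $J$ as sums over quadruples $(\tau, A, B, \pi)$ with $\tau \in \Cyc(\pi)$, imposing $\supp(\tau) \subseteq A$ in $I$ and $\supp(\tau) \subseteq B$ in $J$, with the remaining conditions on $\pi$ unchanged.

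Next I would make the substitution $(A, B) \mapsto (A \cup \supp(\tau), B \setminus \supp(\tau))$ in $J$. This is a bijection onto the indexing set of $I$; it changes the power of $t$ and $1-t$ to match those in $I$, preserves compatibility of $\pi$ with the new partition (the cycle $\tau$ simply migrates from $B$ to $A$), and converts the condition ``$\pi|_{B \cup S}$ is $\sigma$-noncrossing'' into ``$\pi|_{B \cup S \cup \supp(\tau)}$ is $\sigma$-noncrossing'', using $B \cup S = (B \setminus \supp(\tau)) \cup S \cup \supp(\tau)$. On the $I$ side, I would then split the sum according to whether $\pi|_{B \cup S \cup \supp(\tau)}$ is $\sigma$-noncrossing or $\sigma$-crossing (a well-defined dichotomy because $\tau \in \Cyc(\pi)$ and $\supp(\tau) \subseteq A$ together make $B \cup S \cup \supp(\tau)$ a $\pi$-invariant subset). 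The $\sigma$-noncrossing half of $I$ now coincides with $J$ term by term and cancels in $I - J$, leaving exactly the $\sigma$-crossing half, which is the formula claimed in Lemma \ref{lem:etaderivative}.

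The main bookkeeping obstacle is verifying that the change of variable in $J$ correctly transports all three conditions on $\pi$ (partition-compatibility, $\sigma$-noncrossing of $\pi|_{B \cup S}$, and the role of $\tau$ as a cycle of $\pi$) into precisely the noncrossing-half conditions of $I$, and noting that the statement of Lemma \ref{lem:etaderivative} silently encodes $\tau \in \Cyc(\pi)$ through the very requirement that $\pi|_{B \cup S \cup \supp(\tau)}$ be a well-defined permutation. Both points follow from the observation that the only way a cycle of $\pi$ of length two with support in $A$ can be swallowed into $B \cup S \cup \supp(\tau)$ is for that cycle to equal $\tau$ itself.
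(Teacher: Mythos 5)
Your argument is correct and follows essentially the same route as the paper: differentiate the expression from Lemma \ref{lem:etaformula} with the product rule, reindex so each term is tagged by a cycle $\tau$ of $\pi$, and observe that the "$\supp(\tau) \subseteq A$" and "$\supp(\tau) \subseteq B$" contributions cancel exactly when $\pi|_{B\cup S\cup\supp(\tau)}$ is $\sigma$-noncrossing, leaving only the $\sigma$-crossing terms. Your substitution $(A,B)\mapsto(A\cup\supp(\tau),B\setminus\supp(\tau))$ is just a more explicit bookkeeping of the paper's step of fixing $A\setminus\supp(\tau)$ and $B\setminus\supp(\tau)$ and comparing the two choices of where $\supp(\tau)$ lives (note the image of that substitution is the \emph{noncrossing half} of $I$'s index set, not all of it, as you in fact use in the next sentence).
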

	
	\begin{proof}
		We can rewrite the expression in Lemma \ref{lem:etaformula} as follows.  For each $A$, $B$, and $\pi$, we have the term
		\[
		\frac{1}{N^{2g(\sigma,\pi)}} a(\pi) b(\pi) t^{|A|/2} (1 - t)^{|B|/2}.
		\]
		We rewrite
		\[
		t^{|A|/2} (1 - t)^{|B|/2} = t^{\#\Cyc(\pi|_A)} (1 - t)^{\# \Cyc(\pi|_B)} = \left( \prod_{\tau \in \Cyc(\pi|_A)} t \right) \left( \prod_{\tau \in \Cyc(\pi|_B)} (1 - t) \right).
		\]
		Differentiating this with the product rule produces a sum of terms indexed by $\Cyc(\pi|_{A \cup B})$.  For each $\tau \in \Cyc(\pi|_A)$, the power of $t$ decreases by $1$, while for each $\tau \in \Cyc(\pi|_B)$, the power of $(1-t)$ decreases by $1$ and a negative sign appears.  Therefore, the derivative is
		\[
		\sum_{\tau \in \Cyc(\pi|_{A \cup B})} (\mathbf{1}_{\supp(\tau) \subseteq A} - \mathbf{1}_{\supp(\tau) \subseteq B}) t^{\# (\Cyc(\pi|_A) \setminus \{\tau\})} (1 - t)^{\# (\Cyc(\pi|_B) \setminus \{\tau\})}
		\]
		Summing up over $A$, $B$, $\pi$, and $\tau$ and exchanging the order of summation, we obtain
		\begin{multline*}
			\sum_{\tau \in \Trans([r])} \sum_{\substack{\pi \in \Perm_2([r]) \\ \tau \in \Cyc(\pi)}} \sum_{\substack{A, B \subseteq [r] \\ A \sqcup B \sqcup S = [r] \\ \pi \text{ compatible with } (A,B,S) \\ \pi|_{B \cup S} \text{ $\sigma$-noncrossing}}} \frac{1}{N^{2g(\sigma,\pi)}}
			a(\pi) b(\pi) \\
			(\mathbf{1}_{\supp(\tau) \subseteq A} - \mathbf{1}_{\supp(\tau) \subseteq B}) t^{\# (\Cyc(\pi|_A) \setminus \{\tau\})} (1 - t)^{\# (\Cyc(\pi|_B) \setminus \{\tau\})}.
		\end{multline*}
		If we fix $\tau$, $\pi$, and $A \setminus \supp(\tau)$ and $B \setminus \supp(\tau)$, then there are at most two choices of $(A,B)$ based on whether $\supp(\tau) \subseteq A$ or $\supp(\tau) \subseteq B$.  Note that $\pi|_{B \setminus \supp(\tau)}$ is required to be $\sigma$-noncrossing; but if $\pi|_{B \cup \supp(\tau)}$ is also $\sigma$-noncrossing, then there is one term with $\supp(\tau) \subseteq B$ and another term with $\supp(\tau) \subseteq A$, and these two terms have opposite signs, and hence they cancel.  The only remaining terms are those for which $\supp(\tau) \subseteq A$ and $\pi|_{B \cup \supp(\tau)}$ is $\sigma$-crossing.  Taking only those terms and then rearranging the order of summation yields the formula asserted by the lemma.
	\end{proof}

	\subsection{Combinatorial evaluation of the crossing derivative}
	
	Now we describe how to evaluate the right-hand side of \eqref{eq:maingoal}.  We apply each of the operations in Definition \ref{def: crossing derivative} in order, emphasizing how the operations translate into the combinatorics of permutations.  We can compute
	\begin{multline} \label{eq:Laplacianexpansion}
		\frac{1}{2} L_V f(x_V, x_W) =
		\frac{1}{2} \sum_{\substack{i_1 < i_2 \\ u(i_1), u(i_2) \in V}} \ip{u(i_1),u(i_2)} \\
		\biggl( Z_{j_1}x_{u(i_1+1)} Z_{i_1+1} \dots x_{u(i_2-1)} Z_{i_2-1} \otimes Z_{j_2} x_{u(i_2+1)} \dots x_{u(r)}Z_r x_{u(1)} Z_1 \dots x_{u(i_1-1)} Z_{i_1-1} \\
		+ Z_{i_2} x_{u(i_2+1)} \dots x_{u(r)}Z_r x_{u(1)} Z_1 \dots x_{u(i_1-1)} Z_{i_1-1} \otimes Z_{i_1}x_{u(i_1+1)} Z_{i_1+1} \dots x_{u(i_2-1)} Z_{i_2-1} \biggr)
	\end{multline}
	In other words, $L_V f(x_V,x_W)$ takes any pair of indices $i_1 < i_2$ and puts the letters in the word between $i_1$ and $i_2$ (from left to right cyclically) on the left side of the tensor sign, and the letters between $i_2$ and $i_1$ (from left to right cyclically) on the right side of the tensor sign; then it does the same for the pair $i_2 < i_1$.
	
	This can be described in terms of permutations as follows.  Fixing two indices $i_1$ and $i_2$, let $\tau$ be the transposition $(i_1 i_2)$.  Then write
	\[
	\sigma = (i_1 k_1 \dots k_m i_2 \ell_1 \dots \ell_n).
	\]
	Then the corresponding term in the sum is
	\begin{equation} \label{eq:derivativeterm}
		\ip{u(i_1),u(i_2)} Z_{i_1} x_{u(k_1)} Z_{k_1} \dots x_{u(k_m)} Z_{k_m} \otimes Z_{i_2} x_{u(\ell_1)} Z_{\ell_1} \dots x_{u(\ell_n} Z_{\ell_n}.
	\end{equation}
	Note that $\sigma \take \tau = (k_1 \dots k_m)(\ell_1 \dots \ell_n)$, and thus the two products on the left and right-hand sides of the tensor sign correspond to the two cycles of $\sigma \take \tau$.
	
	Since we performed summation over $i_1$ and $i_2$, each transposition $\tau$ arises in two ways. The inner product $\ip{u(i_1),u(i_2)}$ is the same as $\ip{u(i_2),u(i_1)}$ since the inner-product space is real.  Furthermore, it will be clear from the computations below that switching the labels of $i_1$ and $i_2$ would not change the final contribution to our formula for $\eta_f'$, as the relabeling can be compensated by cyclic permutation of monomials under the trace.  Hence, each transposition is effectively counted twice, but this is compensated by the factor of $1/2$ in front of $L_V$.
	
	Next, when we apply $\partial_V \otimes \partial_V$, that amounts to choosing one term on the left and one term on the right to replace with a tensor sign.  Let us call $j_1 = k_p$ the index on the left that we replace with the tensor sign, and call $j_2 = \ell_q$ the index on the right that we replace with the tensor sign.  Thus, we can write
	\[
	\sigma = (i_1 k_1 \dots k_{p-1} j_1 k_{p+1} \dots k_m i_2 \ell_1 \dots \ell_q j_2 \ell_{q+1} \dots \ell_n).
	\]
	Note that the indices $i_1$, $i_2$, $j_1$, $j_2$ such that (after cyclic permutation) $i_1 < j_1 < i_2 < j_2$ correspond precisely to $\sigma$-crossings.  For such $i_1$, $i_2$, $j_1$, $j_2$, the corresponding term in $(\partial_V \otimes \partial_V) L_V f$ is
	\begin{align*}
		&\ip{u(i_1),u(i_2)} (u(j_1) \otimes u(j_2)) \\
		\otimes & Z_{i_1} x_{u(k_1)} Z_{k_1} \dots x_{u(k_{p-1})} Z_{k_{p-1}} \\
		\otimes & Z_{j_1} x_{u(k_{p+1})} Z_{k_{p+1}}  \dots x_{u(k_m)} Z_{k_m} \\
		\otimes & Z_{i_2} x_{u(\ell_1)} Z_{\ell_1} \dots x_{u(\ell_n)}  Z_{\ell_{q-1}} \\
		\otimes & Z_{j_1} x_{u(\ell_{q+1})} Z_{\ell_1} \dots x_{u(\ell_n)}  Z_{\ell_n}.
	\end{align*}
	Finally, by application of $\mathfrak{n}_V$ tensor the maps $(\id_{V \oplus W} \otimes e_j)_*$ for $j = 1$, \dots, $4$, and the multiplication map $\mathfrak{m}_{4,3,2,1}$, we get
	\begin{align}
		h_{\tau,\tau'}(x_{V\otimes e_1},&x_{W\otimes e_1}, x_{V \otimes e_{2}},x_{W \otimes e_{2}},x_{V \otimes e_3}, x_{W \otimes e_3}, x_{V \otimes e_4}, x_{W \otimes e_4}) \label{eq:Dcrossterm} \\
		:=&\ip{u(i_1),u(i_2)}_V \ip{u(j_1),u(j_2)}_{V \oplus W}\nonumber \\
		&Z_{j_1} x_{u(\ell_{q+1}) \otimes e_4} Z_{\ell_1} \dots x_{u(\ell_n) \otimes e_4}  Z_{\ell_n} \nonumber \\
		&Z_{i_2} x_{u(\ell_1) \otimes e_3} Z_{\ell_1} \dots x_{u(\ell_n) \otimes e_3}  Z_{\ell_{q-1}} \nonumber \\
		&Z_{j_1} x_{u(k_{p+1}) \otimes e_2} Z_{k_{p+1}}  \dots x_{u(k_m) \otimes e_2} Z_{k_m} \nonumber \\
		&Z_{i_1} x_{u(k_1) \otimes e_1} Z_{k_1} \dots x_{u(k_{p-1}) \otimes e_1} Z_{k_{p-1}}. \nonumber
	\end{align}
	Here $\ip{u(j_1),u(j_2)}_{V \oplus W} = \ip{u(j_1),u(j_2)}_V$ because we assumed $u(j_1), u(j_2) \in V$, but below in handling $D_{V,W}^{\crss}$ we will also use \eqref{eq:Dcrossterm} in the case $u(j_1), u(j_2) \in W$.  Note that the order of multiplication follows the cycle
	\[
	(\ell_{q+1} \dots \ell_n \ell_1 \dots \ell_{q-1} k_{p+1} \dots k_m k_1 \dots k_{p-1}) = (\sigma \take \tau) \take \tau' = \sigma \take \tau \tau',
	\]
	where $\tau'$ is the transposition $(j_1 j_2)$.  Now $D_{V,V}^{\crss} f$ is the summation of this expression over appropriate $i_1$, $i_2$, $j_1$, and $j_2$, or equivalently over pairs $\tau$, $\tau'$ of transpositions that form a $\sigma$-crossing, with $\tau$ and $\tau'$ supported in $[r] \setminus S$.
	
	$D_{V,W}^{\crss}f$ is obtained in the analogous way, where we use $\partial_W \otimes \partial_W$ rather than $\partial_V \otimes \partial_V$, apply $\mathfrak{n}_W$ rather than $\mathfrak{n}_V$, and take $\tau'$ supported in $S$ rather than $[r] \setminus S'$, while $\tau$ is still supported in $S$.  Here in \eqref{eq:Dcrossterm}, we have $\ip{u(j_1),u(j_2)}_{V\oplus W} = \ip{u(j_1),u(j_2)}_W$.  Overall,
	\[
	(1-t) D_{V,V}^{\crss} f + D_{V,W}^{\crss} f = \sum_{\substack{\tau, \tau' \in \Trans([r]) \\ \supp(\tau), \supp(\tau') \subseteq [r] \setminus S}} (1 - t) h_{\tau,\tau'} + \sum_{\substack{\tau, \tau' \in \Trans([r]) \\ \supp(\tau) \subseteq [r] \setminus S \\ \supp(\tau') \subseteq S}} h_{\tau,\tau'}.
	\]
	
	The next operation in the definition of $T_{V,W}^{\crss}$ is to apply the map $(\alpha_{s,t})_*$, where $\alpha_{s,t}: V \oplus W \to V \oplus [(V \oplus W) \otimes \R^6]$ is given by \eqref{eq:alphast}.  The map $\alpha_{s,t}$ will replace $x_{v \otimes e_j}$ by $x_{\alpha_{s,t}(v \otimes e_j)}$ for $v \in V$ and $j \in [4]$.  More explicitly, evaluating $(\alpha_{s,t})_* D_{V,V}^{\crss} f$ on $X_V^{(N)}$ and $X_{(V \oplus W) \otimes \R^6}$ produces
	\begin{align*}  
		(\alpha_{s,t})_* &D_{V,V}^{\crss} f(X_V^{(N)},X_{(V \oplus W) \otimes \R^6}) \\
		= \Bigl[D_{V,V}^{\crss}f \bigl(&t^{1/2} X_V^{(N)} + (1 - t)^{1/2}(s^{1/2} X_{V \otimes e_5} + (1 - s)^{1/2} X_{V \otimes e_1}), s^{1/2} X_{W \otimes e_5} + (1 - s)^{1/2} X_{W \otimes e_1}, \\
		&t^{1/2} X_V^{(N)} + (1 - t)^{1/2}(s^{1/2} X_{V \otimes e_6} + (1 - s)^{1/2} X_{V \otimes e_2}), s^{1/2} X_{W \otimes e_6} + (1 - s)^{1/2} X_{W \otimes e_2}, \\
		&t^{1/2} X_V^{(N)} + (1 - t)^{1/2}(s^{1/2} X_{V \otimes e_6} + (1 - s)^{1/2} X_{V \otimes e_2}), s^{1/2} X_{W \otimes e_6} + (1 - s)^{1/2} X_{W \otimes e_2}, \\
		&t^{1/2} X_V^{(N)} + (1 - t)^{1/2}(s^{1/2} X_{V \otimes e_5} + (1 - s)^{1/2} X_{V \otimes e_4}), s^{1/2} X_{W \otimes e_5} + (1 - s)^{1/2} X_{W \otimes e_4}
		\bigr)\Bigr],
	\end{align*}
	and similarly with $D_{V,V}^{\crss}f$ replaced by $D_{V,W}^{\crss} f$.
	
	\begin{lemma}
		Fix $\tau = (i_1i_2)$ and $\tau = (j_1j_2)$ disjoint transpositions of $[r]$ with $\supp(\tau) \subseteq [r] \setminus S$.  Let $h_{\tau,\tau'}$ be given by \eqref{eq:Dcrossterm}.   For $\pi \in \Perm_2([r])$, let
		\[
		\crss(\pi,\tau) = \# \{\rho \in \Cyc(\pi): \rho \text{ and } \tau \text{ are a } \sigma\text{-cross}\}.
		\]
		Then when $u(j_1), u(j_2) \in V$, we have
		\begin{align*}
			(1-t)^{\mathbf{1}_{\supp(\tau') \subseteq [r] \setminus S}} &\frac{1}{N^2} \E \tr_{\M_N * \cM_{(V \oplus W) \otimes \R^6}} \\
			\Bigl[h_{\tau,\tau'}\bigl(&t^{1/2} X_V^{(N)} + (1 - t)^{1/2}(s^{1/2} X_{V \otimes e_5} + (1 - s)^{1/2} X_{V \otimes e_1}), s^{1/2} X_{W \otimes e_5} + (1 - s)^{1/2} X_{W \otimes e_1}, \\
			&t^{1/2} X_V^{(N)} + (1 - t)^{1/2}(s^{1/2} X_{V \otimes e_6} + (1 - s)^{1/2} X_{V \otimes e_2}), s^{1/2} X_{W \otimes e_6} + (1 - s)^{1/2} X_{W \otimes e_2}, \\
			&t^{1/2} X_V^{(N)} + (1 - t)^{1/2}(s^{1/2} X_{V \otimes e_6} + (1 - s)^{1/2} X_{V \otimes e_2}), s^{1/2} X_{W \otimes e_6} + (1 - s)^{1/2} X_{W \otimes e_2}, \\
			&t^{1/2} X_V^{(N)} + (1 - t)^{1/2}(s^{1/2} X_{V \otimes e_5} + (1 - s)^{1/2} X_{V \otimes e_4}), s^{1/2} X_{W \otimes e_5} + (1 - s)^{1/2} X_{W \otimes e_4}
			\bigr)\Bigr]
		\end{align*}
		\begin{equation} \label{eq:integralcombinatorics1}
			= \sum_{\substack{\pi \in \Perm_2([r]) \\ \tau, \tau' \in \Cyc(\pi)}} \sum_{\substack{A, B \subseteq [r] \\ A \sqcup B \sqcup S = [r] \\ \supp(\tau) \subseteq A \\ \supp(\tau') \subseteq B \cup S \\ \pi|_{S \cup B} \text{ $\sigma$-noncrossing}}} \frac{1}{N^{2g(\pi,\sigma)}} a(\pi) b(\pi) s^{\crss(\pi|_{S \cup B},\tau)-1} t^{\# \Cyc(\pi|_A)} (1 - t)^{\# \Cyc(\pi|_B)}.
		\end{equation}
	\end{lemma}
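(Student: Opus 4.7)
The plan is to expand the left-hand side by multilinearity of $h_{\tau,\tau'}$ in its inner variables, apply the genus expansion (Proposition~\ref{prop:genusexpansion}) to each resulting monomial, and identify the output with the right-hand side via the bijection $\rho \mapsto \pi := \rho \cup \{\tau,\tau'\}$. The key initial observation is that $h_{\tau,\tau'}$ is, by the computation surrounding \eqref{eq:Dcrossterm}, the monomial whose index cycle is precisely $\sigma' := \sigma \take \tau\tau'$, so the genus expansion of $h_{\tau,\tau'}$ will produce pair partitions $\rho$ on the inner index set $[r] \setminus \supp(\tau\tau')$ with a genus factor $N^{-2g(\sigma',\rho)}$ and a $\sigma'$-noncrossing condition on the semicircular part.

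First I would apply $(\alpha_{s,t})_*$ and expand each substituted variable: for $u(k) \in V$ the substitution gives three terms with weights $t^{1/2}$ on the GUE piece, $(1-t)^{1/2}(1-s)^{1/2}$ on the piece indexed by the original basis vector $e_{c(k)}$, and $(1-t)^{1/2}s^{1/2}$ on the piece indexed by the ``shared'' vector $e_5$ or $e_6$ (the sharing coming from the identification $1 \leftrightarrow 4$ and $2 \leftrightarrow 3$ built into \eqref{eq:alphast}); when $u(k) \in W$ the GUE piece drops out. Each term of the multilinear expansion corresponds to a partition of $[r] \setminus \supp(\tau\tau')$ into the set $A$ (indices assigned the GUE piece), the set $B$ (indices with $u(k) \in V$ assigned a semicircular piece), and $S = \{k : u(k) \in W\}$; together with the forced assignments $\supp(\tau) \subseteq A$ and $\supp(\tau') \subseteq B$ coming from the setup, this yields $A \sqcup B \sqcup S = [r]$. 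Proposition~\ref{prop:genusexpansion} then evaluates each trace as a sum over pair partitions $\rho \in \Perm_2([r] \setminus \supp(\tau\tau'))$ compatible with the basis-vector labeling and with $\rho|_{S \cup B}$ being $\sigma'$-noncrossing.

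I would then set $\pi := \rho \cup \{\tau,\tau'\}$. Lemma~\ref{lem:genusupdate} applied twice gives the key genus identity: since $\tau$ lies inside a single cycle of $\sigma$ and so is not a $\sigma$-bridge, $g(\sigma,\pi) = g(\sigma \take \tau, \pi \setminus \{\tau\})$; and because $\tau,\tau'$ form a $\sigma$-crossing, $\tau'$ \emph{becomes} a $(\sigma \take \tau)$-bridge (see Figure~\ref{fig: sigma take tau 2}), so $g(\sigma \take \tau, \pi \setminus \{\tau\}) = g(\sigma',\rho) + 1$. Combined, $N^{-2g(\sigma',\rho)} = N^2 \cdot N^{-2g(\sigma,\pi)}$, which exactly absorbs the $1/N^2$ prefactor on the left. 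The products of traces of the reduced matrices produced by the genus expansion rewrite as $b(\pi)$ via Lemma~\ref{lem:updateZ} (with $\tau\tau'$ as the removed permutation), and the two inner products $\langle u(i_1),u(i_2)\rangle\langle u(j_1),u(j_2)\rangle$ from \eqref{eq:Dcrossterm}, together with the basis-matching delta symbols from the labeling, assemble into $a(\pi)$ after reducing to an orthonormal basis as in the proof of Lemma~\ref{lem:etaformula}.

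The main obstacle is the combinatorial bookkeeping of the $s$ and $t$ weights and showing that ``feasibility of label matching'' is equivalent to the $\sigma$-noncrossing condition on $\pi|_{B \cup S}$. A $\rho$-pair $\{k,k'\}$ with both endpoints in $B$ admits a compatible label assignment iff either $c(k) = c(k')$ (contributing $(1-t)(1-s)$) or $\{c(k),c(k')\} \in \{\{1,4\},\{2,3\}\}$ (contributing $(1-t)s$ by matching at $e_5$ or $e_6$); no other label combinations can be matched. Direct inspection of the cyclic order of $\sigma$ (using the explicit form $(i_1 k_1 \dots j_1 \dots i_2 \ell_1 \dots j_2 \dots)$) shows that the matching-label case corresponds to $\{k,k'\}$ $\sigma$-crossing neither $\tau$ nor $\tau'$, the swap-label case to $\sigma$-crossing $\tau$ but not $\tau'$, and the infeasible case to $\sigma$-crossing $\tau'$. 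This infeasibility is precisely the condition that $\pi|_{B \cup S}$ be $\sigma$-noncrossing (noting that $\tau' \in \pi|_{B \cup S}$ itself contributes no crossings to worry about with itself), and collecting the $s$-weights over all $\rho$-pairs gives $s^{\#\{\rho\text{-pairs }\sigma\text{-crossing }\tau\}} = s^{\crss(\pi,\tau)-1}$ after subtracting the single $\tau$-crossing contributed by $\tau'$ itself. The remaining $t$ and $(1-t)$ weights, combined with the explicit $(1-t)^{\mathbf{1}_{\supp(\tau') \subseteq [r] \setminus S}}$ prefactor on the left, assemble into $t^{\#\Cyc(\pi|_A)}(1-t)^{\#\Cyc(\pi|_B)}$ after carefully tracking the contributions of $\tau$ and $\tau'$ themselves to the cycle counts in $A$ and $B$.
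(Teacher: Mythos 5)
Your proposal follows the same route as the paper: expand $h_{\tau,\tau'}$ multilinearly after applying $(\alpha_{s,t})_*$, invoke Proposition~\ref{prop:genusexpansion} on the monomial indexed by $\sigma' = \sigma \take \tau\tau'$, pass from $\rho \in \Perm_2([r]\setminus\supp(\tau\tau'))$ to $\pi = \rho \cup \{\tau,\tau'\}$, use Lemma~\ref{lem:updateZ} for the $b(\pi)$ factor and Lemma~\ref{lem:genusupdate} (twice, exactly as you describe) for the $N^2$-shift in genus, and read off the $s$, $t$ powers from the inner products.

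One piece of the bookkeeping as you have written it is wrong, and it is the step on which the whole lemma turns. You assert that a $\rho$-pair $\{k,k'\}$ in $B$ with $c(k) = c(k')$ contributes $(1-t)(1-s)$. In fact such a pair contributes $(1-t)$: the substituted vector at each endpoint is $(1-t)^{1/2}\bigl(s^{1/2} e_{5+\delta(c,2)+\delta(c,3)} + (1-s)^{1/2} e_{c}\bigr)$, and the inner product of this vector with itself is $(1-t)(s + (1-s)) = 1-t$. Equivalently, in your ``three-term'' framing, both endpoints choosing the original $e_c$ gives $(1-t)(1-s)$ and both choosing the shared $e_5$ or $e_6$ gives $(1-t)s$; these two are both admissible when $c(k) = c(k')$ and must be summed. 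Only the $\tau$-crossing pairs (those with $\{c(k),c(k')\}$ equal to $\{1,4\}$ or $\{2,3\}$ with $c(k)\ne c(k')$) are forced to go through the shared vector and so pick up a bare $s$. If one carried your stated weights forward literally one would obtain an extraneous factor $(1-s)^{\#\{\rho\text{-pairs in }B\cup S\text{ not crossing }\tau\}}$, which is incompatible with the claimed $s^{\crss(\pi,\tau)-1}$. Since you do state the correct final exponent, this reads as a slip rather than a conceptual error, but as written the intermediate step does not support the conclusion. The rest of the argument (the noncrossing equivalence, the genus shift, the matrix reduction, the $t$-powers and the role of the $(1-t)^{\mathbf{1}}$ prefactor absorbing the extra factor from $\tau'$) matches the paper's.
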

	
	\begin{proof}
		First, note that we can assume without loss of generality that $\supp(\tau')$ is either contained in $S$ or contained in $[r] \setminus S$.  Indeed, if $\supp(\tau')$ had one element $j_1$ in $S$ and one element $j_2$ in $[r] \setminus S$, then $\ip{u(j_1),u(j_2)}_{V \oplus W}$ would equal zero.  Hence, $h_{\tau,\tau'}$ would be zero and $a(\pi)$ would also be zero for every permutation $\pi$ having $\tau'$ as a cycle, so that both sides of \eqref{eq:integralcombinatorics1} would be zero.
		
		For $j \in [4]$, we denote by $I_j$ the set of indices $i$ such that $i$th vector is in $(V \oplus W) \otimes e_j$.  Thus, the cycle $\sigma$ can be written in cyclic order as
		\[
		i_1 \quad I_1 \quad j_1 \quad I_2 \quad i_2 \quad I_3 \quad j_2 \quad I_4.
		\]	
		We expand the left-hand side as a sum of the expected traces of monomials using multilinearity.  For each index $i \not \in S$, we make a choice of either $t^{1/2} X_{u(i)}^{(N)}$ or $(1 - t)^{1/2} X_{\alpha_{s,t}(u(i) \otimes e_j)}$.  Let $S_0 = S \setminus \supp(\tau')$.  Let $A_0$ be the set of indices where we choose $t^{1/2} X_{u(i)}^{(N)}$ and let $B_0$ be the set where we choose $(1 - t)^{1/2} X_{\alpha_{s,t}(u(i) \otimes e_j)}$.  Thus, $A_0$, $B_0$, and $S_0$ form a partition of $[r] \setminus \supp(\tau \tau')$.
		
		Let $\tilde{Z}_j$ for $j \in S \setminus \supp(\tau \tau')$ be the scalar matrices obtained from the $Z_k$'s by the following modifications in the case where $I_1$, \dots, $I_4$ are all nonempty:
		\begin{itemize}
			\item Replace $Z_{k_{p-1}}$ by $Z_{k_{p-1}} Z_{j_1}$.
			\item Replace $Z_{\ell_n}$ by $Z_{\ell_n} Z_{i_2}$.
			\item Replace $Z_{\ell_{q-1}}$ by $Z_{\ell_{q-1}} Z_{j_1}$.
			\item Replace $Z_{k_m}$ by $Z_{k_m} Z_{i_1}$.
		\end{itemize}
		In the case where one of the intervals $I_j$ is empty, we combine the terms even further.  For instance, if $I_1$ is empty and $I_2$ and $I_4$ is nonempty, then we replace $Z_{k_m}$ by $Z_{k_m} Z_{i_1} Z_{j_1}$.  In short, we combine terms wherever multiple $Z_k$'s appear in a row cyclically in \eqref{eq:Dcrossterm}.  If all the intervals $I_j$ are empty, then there are no remaining $x_v$'s and we can evaluate the trace $\tr_N(Z_{i_1} Z_{j_1} Z_{i_2} Z_{j_2})$ directly; we leave it to the reader to verify the lemma in this case.  Note that the terms $\tilde{Z}$ described above are precisely $\bZ^{\sigma,\tau \tau'}$ as given by Notation \ref{not:updateZ}, and when all the intervals are empty, then $\supp(\tau \tau') = [r] = [4]$, and $\tr(Z_{i_1}Z_{j_1} Z_{i_2} Z_{j_2}) = \lambda_{\sigma,\tau\tau'}$.
		
		Now we apply the genus expansion for mixtures of GUE and semicirculars (Proposition \ref{prop:genusexpansion}) to evaluate the expected trace of $h_{\tau,\tau'}$ as the scalar term $\ip{u(i_1),u(i_2)} \ip{u(j_1),u(j_2)}$ times
		\begin{equation} \label{eq:integralcombinatorics2}
			\sum_{\substack{A_0, B_0 \subseteq [r] \setminus \supp(\tau\tau') \\ A_0 \sqcup B_0 \sqcup S_0 = [r] \setminus \supp(\tau \tau')}} \sum_{\substack{\pi_0 \in \Perm_2([r] \setminus \supp(\tau \tau')) \\ \pi_0|_{B_0 \cup S_0} \text{ $\sigma \take \tau \tau'$-noncrossing}}} \frac{1}{N^{2g(\sigma \take \tau \tau',\pi)}} a_{s,t}(\pi_0) b_{\sigma \take \tau}(\pi_0),
		\end{equation}
		where
		\[
		a_{s,t}(\pi) = \prod_{(ij) \in \Cyc(\pi_0)} \ip{\tilde{u}_{A_0,B_0}(i), \tilde{u}_{A_0,B_0}(j)},
		\]
		where for each $i$,
		\[
		\tilde{u}_{A_0,B_0}(i) = \begin{cases}
			t^{1/2} u(i), & i \in A_0 \\
			(1 - t)^{1/2} u(t) \otimes (s^{1/2} e_{5 + \delta(j,2) + \delta(j,3)} + (1 - s)^{1/2} e_j) & i \in B_0 \cap I_j \\
			u(t) \otimes (s^{1/2} e_{5 + \delta(j,2) + \delta(j,3)} + (1 - s)^{1/2} e_j), & i \in S_0 \cap I_j,
		\end{cases}
		\]
		and where
		\[
		b_{\sigma \take \tau}(\pi_0) = \prod_{\gamma \in \Cyc(\pi_0 (\sigma \take \tau \tau'))} \tr_N\left( \prod_{k \in \gamma} \tilde{Z}_k \right).
		\]
		For each $\pi_0$, let $\pi \in \Perm_2([r])$ be the permutation with $\Cyc(\pi) = \Cyc(\pi_0) \sqcup \{\tau,\tau'\}$.  Let
		\[
		A = A_0 \cup \supp(\tau).
		\]
		Moreover, let $B$ be defined by adding $\supp(\tau')$ to $B_0$ if $u(j_1), u(j_2) \in V$ and $B = B_0$ otherwise.  Let $S$ be defined by adding $\supp(\tau')$ to $S_0$ if $u(j_1), u(j_2) \in W$ and $S = S_0$ otherwise.
		
		Our goal is to write each of the components of the formula above in terms of $\pi$, $A$, and $B$.
		
		\textbf{Scalar matrix term:} By Lemma \ref{lem:updateZ}, we have $b_{\sigma \take \tau \tau'}(\pi_0) = b(\pi)$ as given in Lemma \ref{lem:etaformula}.
		
		\textbf{Inner product term:} Note that for $i, i' \in S_0$,
		\begin{align*}
			\ip{\tilde{u}_{A_0,B_0}(i), \tilde{u}_{A_0,B_0}(i')} = \begin{cases}
				\ip{u(i), u(i')}, & (i,i') \in I_1^{\times 2} \cup I_2^{\times 2} \cup I_3^{\times 2} \cup I_4^{\times 2}, \\
				s \ip{u(i),u(i')}, & (i,i') \in [(I_1 \cup I_2) \times (I_3 \cup I_4)] \cup [(I_3 \cup I_4) \times (I_1 \cup I_2)] \\
				0, & (i,i') \in [(I_1 \cup I_4) \times (I_2 \cup I_3)] \cup [(I_1 \cup I_4) \times (I_2 \cup I_3)]
			\end{cases}
		\end{align*}
		In other words, $\ip{\tilde{u}_{A_0,B_0}(i), \tilde{u}_{A_0,B_0}(i')}$ is the same as $\ip{u(i),u(i')}$ if the two indices were in the same $I_j$, it gets multiplied by $s$ if $(i,i')$ crosses $\tau$, and it vanishes if $(i,i')$ crosses $\tau'$.  The case where $i, i' \in B_0$ has the same formula as the $S_0$ case above, except that all the terms are multiplied by $1 - t$.  Finally, in the case of $i, i' \in A_0$, we have $\ip{\tilde{u}_{A_0,B_0}(i),\tilde{u}_{A_0,B_0}(i)} = t \ip{u(i),u(i')}$.  Putting this all together,
		\[
		\ip{u(i_1),u(i_2)} \ip{u(j_1),u(j_2)} \prod_{(i,i') \in \Cyc(\pi_0)} \ip{\tilde{u}_{A_0,B_0}(i), \tilde{u}_{A_0,B_0}(j)}
		=
		a(\pi) s^{\crss(\pi|_{B_0 \cup S_0},\tau)} t^{|A_0|/2} (1 - t)^{|B_0|/2},
		\]
		if $\pi_0$ does not cross $\tau'$, and it is zero otherwise.
		
		\textbf{Noncrossing conditions:} We claim that, for the terms which do not vanish, $\pi|_{B \cup S}$ is $\sigma$-noncrossing if and only if $\pi|_{B_0 \cup S_0}$ is $\sigma \take (\tau \tau')$-noncrossing.  Indeed, suppose that $\pi|_{B_0 \cup S_0}$ is $\sigma \take (\tau \tau')$-noncrossing.  Then of course the restrictions to $I_1$, $I_2$, $I_3$, and $I_4$ are noncrossing.  We just showed that the inner product term vanishes if some $(i,i') \in  \Cyc(\pi_0)$ crosses $\tau'$, so we may assume there are no crossings of $\tau'$.  Hence, the only way that a crossing could occur is between $I_2$ and $I_3$ or $I_1$ and $I_4$.  However, this is also impossible because $I_1 \cup I_4$ occurs in a cyclically equivalent order with respect to both $\sigma$ and $\sigma \take (\tau \tau')$, and the same holds for $I_2 \cup I_3$.  One argues similarly that if $\pi|_{B \cup S}$ is $\sigma$-noncrossing and the term does not vanish, then $\pi|_{B_0 \cup S_0}$ is $\sigma \take \tau \tau'$-noncrossing.
		
		\textbf{Number of crossings of $\pi$ and $\tau$:} Note that $\pi|_{S \cup B}$ has one more crossing of $\tau$ than $\pi_0$ does, namely the crossing $\tau'$.  Hence, $\crss(\pi|_{B_0 \cup S_0},\tau) = \crss(\pi|_{B \cup S}, \tau) - 1$.
		
		\textbf{Powers of $t$ and $1 - t$:} Note $|A_0|/2 =|A|/2 - 1$.  In the case where $\supp(\tau') \subseteq B$, then $|B_0|/2 = |B|/2 - 1$, and the extra factor of $(1 - t)$ on the left-hand side of the equation compensates for this.  Otherwise, $|B_0|/2 = |B|/2$ and there is no extra factor of $1 - t$.
		
		\textbf{Genus term:} Since $\sigma$ is a single cycle, $\tau$ is not a $\sigma$-bridge.  This implies that, letting $\pi_1 = \pi|_{[r] \setminus \supp(\tau)}$, we have $g(\sigma \take \tau, \pi_1) = g(\sigma,\pi)$ by Lemma \ref{lem:genusupdate}. However, $\tau'$ is a $\sigma \take \tau$-bridge because $(\tau,\tau')$ is a $\sigma$-crossing.  Thus, $g(\sigma \take \tau \tau', \pi_0) = g(\sigma \take \tau,\pi_1) - 1$.  Overall,
		\[
		g(\sigma,\pi) = g(\sigma \take \tau \tau', \pi_0) + 1.
		\]
		Thus, we can replace $N^{-2g(\sigma \take \tau \tau',\pi_0)}$ with $N^2 N^{-2g(\sigma,\pi)}$.  This factor of $N^2$ is then moved to the left side of \eqref{eq:integralcombinatorics1} by multiplying through by $N^{-2}$.
		
		\textbf{Conclusion:} Substituting all these considerations into \eqref{eq:integralcombinatorics2}, we obtain \eqref{eq:integralcombinatorics1}.
	\end{proof}
	
	\begin{lemma} \label{lem:integralcomputation2}
		Fix $\tau \in \Trans([r])$ with support contained in $[r] \setminus S$.  Let $\mathbf{X}_{s,t}^{(N)}$ be the argument of $h_{\tau,\tau'}$ as in the previous lemma.  Then
		\begin{multline} \label{eq:integralcombinatorics3}
			\sum_{\substack{\tau' \in \Trans([r]) \\ \{\tau,\tau'\} \text{ $\sigma$-cross}}} \frac{1}{N^2} (1 - t)^{\mathbf{1}_{\supp(\tau') \subseteq [r] \setminus S}} \int_0^1 \E[h_{\tau,\tau'}(\bX_{s,t}^{(N)})]\,ds \\
			= \sum_{\substack{A, B \subseteq [r] \\ A \sqcup B \sqcup S = [r] \\ \supp(\tau) \subseteq A}} \sum_{\substack{\pi \in \Perm_2([r]) \\ \pi|_{B \cup S} \text{ $\sigma$-noncrossing} \\ \pi|_{B \cup S \cup \supp(\tau)} \text{ $\sigma$-crossing}}}
			\frac{1}{N^{2g(\sigma,\pi)}} a(\pi) b(\pi) t^{|A|/2-1} (1 - t)^{|B|/2}.
		\end{multline}
	\end{lemma}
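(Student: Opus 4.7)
My plan is to apply the previous lemma termwise, compute the elementary $s$-integral that appears, and then swap the order of summation so that the sum over $\tau'$ can be carried out explicitly against the $s^{\crss(\pi,\tau)-1}$ factor it produced.

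First I would fix $\tau$, substitute the formula \eqref{eq:integralcombinatorics1} from the previous lemma into the left-hand side of \eqref{eq:integralcombinatorics3}, and move the integral in $s$ past the finite sums. The only $s$-dependent factor is $s^{\crss(\pi,\tau)-1}$, and
\[
\int_0^1 s^{\crss(\pi,\tau)-1}\,ds = \frac{1}{\crss(\pi,\tau)},
\]
which is finite because every $\pi$ appearing in the previous lemma has $\tau,\tau'\in\Cyc(\pi)$ forming a $\sigma$-crossing, so $\crss(\pi,\tau)\geq 1$.

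Next I would exchange the outer $\tau'$ sum with the sum over $\pi$. For a fixed $\pi\in\Perm_2([r])$ compatible with $(A,B,S)$ and having $\tau\in\Cyc(\pi)$, the $\tau'$ that contribute to the previous lemma are exactly those cycles $\tau'\in\Cyc(\pi)$ with $\supp(\tau')\subseteq B\cup S$ that form a $\sigma$-crossing with $\tau$. Because $\supp(\tau)\subseteq A$ and $\pi$ is compatible with $(A,B,S)$, any cycle of $\pi$ crossing $\tau$ is automatically disjoint from $\supp(\tau)$ and lies in $B\cup S$ (the contribution of any would-be crossing cycle inside $A$ is forced out by the $s$-factor accounting, as seen in the derivation of the previous lemma). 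Hence the number of admissible $\tau'$ for fixed $\pi$ is exactly $\crss(\pi,\tau)$, and the sum over $\tau'$ cancels the factor $1/\crss(\pi,\tau)$ obtained from integration.

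It remains to verify that the set of $\pi$ appearing after the swap is precisely the one indexing the right-hand side of \eqref{eq:integralcombinatorics3}. The condition $\pi|_{B\cup S}$ is $\sigma$-noncrossing is inherited directly. The condition $\tau\in\Cyc(\pi)$, $\supp(\tau)\subseteq A$, together with the existence of some $\tau'\in\Cyc(\pi)$ with $\supp(\tau')\subseteq B\cup S$ crossing $\tau$, is equivalent to asserting that $\pi|_{B\cup S\cup\supp(\tau)}$ is $\sigma$-crossing while $\pi|_{B\cup S}$ is $\sigma$-noncrossing: once $\pi|_{B\cup S}$ has no crossings, any new crossing produced by adjoining the cycle $\tau$ must be between $\tau$ and a cycle of $\pi$ supported in $B\cup S$. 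After this identification, the summand $\frac{1}{N^{2g(\sigma,\pi)}}a(\pi)b(\pi)t^{|A|/2-1}(1-t)^{|B|/2}$ matches term for term, since $|A|/2=\#\Cyc(\pi|_A)$ and $|B|/2=\#\Cyc(\pi|_B)$ on the support of compatible $\pi$, and the $-1$ in the exponent of $t$ comes from the fact that $\supp(\tau)\subseteq A$ already accounts for one cycle of $\pi|_A$ which was pulled out when applying the previous lemma.

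The main obstacle I anticipate is the bookkeeping at the cancellation step: one must be confident that the cycles of $\pi$ counted by $\crss(\pi,\tau)$ are exactly the $\tau'$ over which we sum on the left-hand side, in particular that cycles of $\pi|_A$ do not contribute spuriously. This requires tracing back through the proof of the previous lemma to confirm that the $s$-power only records crossings coming from $B\cup S$ entries, so that $\crss(\pi,\tau)$ as it appears in \eqref{eq:integralcombinatorics1} is effectively $\crss(\pi|_{B\cup S},\tau)$ under the constraints there. Once this identification is secured, the rest is routine rewriting.
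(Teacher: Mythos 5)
Your proposal is correct and takes essentially the same route as the paper: fix $\tau$, exchange the $\tau'$-sum with the $\pi$-sum, recognize that the number of admissible $\tau'$ for fixed $\pi$ is $\crss(\pi|_{B\cup S},\tau)$, and pair this count against the integral of the $s$-power. The only cosmetic difference is the order: you integrate $\int_0^1 s^{n-1}\,ds = 1/n$ first and then cancel against the $\tau'$-count, whereas the paper performs the $\tau'$-sum first to obtain $\int_0^1 n\, s^{n-1}\,ds = \mathbf{1}_{n\geq 1}$, which produces the indicator encoding the condition "$\pi|_{B\cup S\cup\supp(\tau)}$ is $\sigma$-crossing" without needing the side remark that $n\geq 1$. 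One imprecision worth tightening: your assertion that "any cycle of $\pi$ crossing $\tau$ ... lies in $B\cup S$" is not true as stated, since a cycle of $\pi|_A$ can also form a $\sigma$-cross with $\tau$; what is actually true (and what your parenthetical gestures at) is that the $s$-exponent appearing in \eqref{eq:integralcombinatorics1} records only crossings by cycles supported in $B\cup S$ — because the $s$-dependent tensor factors $e_1,\dots,e_6$ appear only in the arguments indexed by $B\cup S$ — so the "$\crss(\pi,\tau)$" in that lemma should be read as $\crss(\pi|_{B\cup S},\tau)$, and then the bijection between admissible $\tau'$ and the cycles counted by that quantity is exact.
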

	
	\begin{proof}
		We fix $\tau$, and then sum up \eqref{eq:integralcombinatorics1} over $\tau'$, $A$, $B$, and $\pi$.  For each choice of $\pi$, $A$, and $B$, the number of possible choices of $\tau'$ satisfying the conditions in \eqref{eq:integralcombinatorics1} is the number of crossings of $\pi|_{B \cup S}$ with $\tau$, or $\crss(\pi|_{B \cup S},\tau)$.  Hence, we obtain
		\begin{multline*}
			\frac{1}{N^2} \E \tr_{\M_N * \cM_{(V \oplus W) \otimes \R^6}}
			[h_{\tau,\tau'}(\bX_{s,t})] \\
			= \sum_{\substack{A, B \subseteq [r] \\ A \sqcup B \sqcup S = [r] \\ \supp(\tau) \subseteq A}} \sum_{\substack{\pi \in \Perm_2([r]) \\ \tau \in \Cyc(\pi)  \\ \pi|_{S \cup B} \text{ $\sigma$-noncrossing}}}
			\frac{1}{N^{2g(\pi,\sigma)}} a(\pi) b(\pi) \crss(\pi|_{B\cup S},\tau) s^{\crss(\pi|_{B \cup S},\tau)-1} t^{\# \Cyc(\pi|_A)} (1 - t)^{\# \Cyc(\pi|_B)}
		\end{multline*}
		Observe that
		\[
		\int_0^1 \crss(\pi|_{B\cup S},\tau) s^{\crss(\pi|_{B\cup S},\tau)-1}\,ds = \mathbf{1}_{\pi|_{B \cup S \cup \supp(\tau) \text{ $\sigma$-crossing}}},
		\]
		and substituting this into the above equation completes the proof.
	\end{proof}
	
	\begin{proof}[{Conclusion of the proof of Theorem \ref{thm:Parraud1}}]
		Let $f$ be as in Observation \ref{obs:specialf}.  The right-hand side of \eqref{eq:maingoal} is
		\[
		\frac{1}{N^2} \int_0^1 \mathbb{E} \circ \tr_{\M_N * \cM_{(V \oplus W) \otimes \R^6}} \left[(\alpha_{s,t})_*[(1-t)D_{V,V}^{\crss} + D_{V,W}^{\crss}](f)(X_V^{(N)},X_{(V \oplus W) \otimes \R^6}) \right] \,ds;
		\]
		this can be evaluated by summing up the quantity in Lemma \ref{lem:integralcomputation2} over transpositions $\tau \in \Trans([r])$ supported in $[r] \setminus S$.  This produces the same result as $\eta_f'(t)$ as computed in Lemma \ref{lem:etaderivative}.  It follows that \eqref{eq:maingoal} holds, and therefore by Observation \ref{obs:specialf} and Observation \ref{obs:derivativeintegral}, the proof of Theorem \ref{thm:Parraud1} is complete.
	\end{proof}

	\bibliographystyle{plain}
	\bibliography{asymptotic_expansion_bib}
	
\end{document}